\documentclass[a4paper]{article}





\usepackage[nonatbib,preprint]{neurips_2023}

\usepackage[utf8]{inputenc} 
\usepackage[T1]{fontenc}    
\usepackage{hyperref}       
\usepackage{url}            
\usepackage{booktabs}       
\usepackage{amsfonts}       
\usepackage{nicefrac,amssymb,amsmath}       
\usepackage{microtype}      
\usepackage{cleveref}    
\usepackage{autonum}  
\usepackage{xcolor,colortbl}
\usepackage{amsthm}
\usepackage{comment}
\usepackage[none]{hyphenat}

\definecolor{darkmidnightblue}{rgb}{0.0, 0.2, 0.4}
\definecolor{darkpowderblue}{rgb}{0.0, 0.2, 0.6}
\definecolor{dukeblue}{rgb}{0.0, 0.0, 0.61}

\hypersetup{
    colorlinks = true,
    citecolor= midnightblue,
    urlcolor= black,
    breaklinks=true,
    linkcolor = dukeblue,
    linkbordercolor = {white},
}
\usepackage{natbib}[round,sort]
\bibliographystyle{apa}

\def\bvartheta{\boldsymbol\vartheta}  
\def\btheta{\boldsymbol\theta}
\def\bxi{\boldsymbol\xi}
\def\bzeta{\boldsymbol\zeta}
\def\bL{\boldsymbol L}
\def\bW{\boldsymbol W}

\def\bG{\boldsymbol G}
\def\wass{{\sf W}}
\def\bV{\boldsymbol V}
\def\bZ{\boldsymbol Z}
\def\bY{\boldsymbol Y}
\def\bg{\boldsymbol g}
\def\bv{\boldsymbol v}
\def\by{\boldsymbol y}

\newtheorem{theorem}{Theorem}
\newtheorem{assumption}{Assumption}
\newtheorem{proposition}{Proposition}

\newtheorem{lemma}{Lemma}
\newtheorem*{lemma*}{Lemma}
\newtheorem{corollary}{Corollary}

\def\bB{\boldsymbol{B}}

\def\bfI{\mathbf I}
\def\bfH{\mathbf H}
\def\bfA{\mathbf A}
\def\bfB{\mathbf B}

\def\bfC{\mathbf C}

\def\Ltwo{\mathbb L_2}
\def\RR{\mathbb R}
\def\NN{\mathbb N}
\def\rmd{{\rm d}}
\def\E{\mathbb E}
\newcommand{\grad}{\nabla}
\newcommand{\norm}[1]{\left\| #1 \right\|}

\newcommand{\brr}[1]{\left( #1 \right)}   

\definecolor{darkmidnightblue}{HTML}{003366}    
\definecolor{midnightblue}{HTML}{0059b3}
\definecolor{chromered}{HTML}{f14233}

\usepackage[colorinlistoftodos,bordercolor=black,backgroundcolor= midnightblue!20,linecolor=black,textsize=scriptsize]{todonotes}

\newcommand{\avetik}[1]{\todo[inline]{\textbf{Avetik:} #1}}

\title{Langevin Monte Carlo for strongly log-concave distributions: Randomized midpoint revisited}

%

\author{%
  Lu Yu\\
  CREST, ENSAE, IP Paris\\
  \texttt{lu.yu@ensae.fr} \\
  \And 
  Avetik Karagulyan\\
  KAUST\\
  \texttt{avetik.karagulyan@kaust.edu.sa} \\
  \And
  Arnak Dalalyan \\
  CREST, ENSAE, IP Paris\\
}

\begin{document}

\maketitle

\begin{abstract}
  We revisit the problem of sampling from a target
  distribution  that has a smooth strongly log-concave 
  density everywhere in $\mathbb R^p$. In this context, 
  if no additional density information is available, 
  the randomized midpoint discretization for the 
  kinetic Langevin diffusion is known to be the most 
  scalable method in high dimensions with large 
  condition numbers. Our main result is a nonasymptotic 
  and easy to compute upper bound on the $\wass_2$-error 
  of this method. To provide a more thorough 
  explanation of our method for establishing the 
  computable upper bound, we conduct an analysis 
  of the midpoint discretization for the vanilla 
  Langevin process. This analysis helps to clarify 
  the underlying principles and provides valuable 
  insights that we use to establish an improved 
  upper bound for the kinetic Langevin process with 
  the midpoint discretization. Furthermore, by 
  applying these techniques we establish new 
  guarantees for the kinetic Langevin process 
  with Euler discretization, which have a better 
  dependence on the condition number than existing 
  upper bounds.
\end{abstract}
\addtocontents{toc}{\protect\setcounter{tocdepth}{0}}

\section{Introduction}

The task of sampling from target distributions with smooth, strongly log-concave densities has been a long-standing challenge in various fields such as statistics, machine learning, and computational physics~\citep{andrieu2003introduction,krauth2006statistical,andrieu2010particle}. 
Over the years, researchers have developed several algorithms to tackle this problem, and one prominent approach are Langevin algorithms~\citep{rogers2000diffusions,oksendal2013stochastic,robert1999monte}.
Langevin algorithms leverage the Langevin equation to design efficient and effective sampling algorithms. 
These methods generate a Markov chain by iteratively updating the position of a particle based on the Langevin equation. 
By simulating the particle's motion over time, these algorithms explore the target distribution and eventually converge to samples that approximate the desired distribution~\citep{robert1999monte}.

The canonical sampling algorithm, Langevin Monte Carlo (LMC)~\citep{RobertsTweedie96,Dalalyan14,durmus2017,erdogdu2021convergence,mousavi2023towards,raginsky2017non,Erdogdu18,mou2022improved,erdogdu2022convergence}, is a Markov chain Monte Carlo (MCMC) method that simulates the dynamics of a fictitious particle moving through a potential energy landscape defined by the target distribution. Formally, it is the Euler-Maruyama discretization of an SDE known as Langevin diffusion. 
The underlying idea can be traced back to the early 20th century when Paul Langevin introduced a stochastic differential equation (SDE) to describe the motion of a particle in a fluid \citep{langevin1908theorie}. 
This SDE, known as Langevin diffusion, combines deterministic and random components to model the particle's behavior under the influence of both a deterministic force and random noise.

One popular variant of Langevin Monte Carlo is based on discretizing the kinetic Langevin diffusion, which introduces a friction term to control the exploration-exploitation trade-off during the sampling process \citep{einstein1905molekularkinetischen,von1906kinetischen}.  
According to \cite{Nelson}, Langevin diffusion is the rescaled limit of the kinetic Langevin diffusion. 
Its ergodicity and mixing-time properties are studied in~\cite{eberle2019,dalalyan_riou_2018}.
 Euler-Maruyama time discretization of this SDE, called kinetic Langevin Monte Carlo (KLMC), is prevalent in the sampling literature~\citep{cheng2018underdamped,dalalyan_riou_2018,shen2019randomized,ma2019there,zhang2023improved}.

The randomized midpoint discretization method, as an alternative to the Euler-Maruyama scheme for KLMC, is proposed by \cite{shen2019randomized}.
They demonstrate the superior performance of this method in terms of both tolerance and condition number dependency.
More recently, \cite{he2020ergodicity} analyze probabilistic properties of the randomized midpoint discretization method for (kinetic) Langevin diffusion.
In this work,  we conduct a comprehensive and thorough analysis of the randomized midpoint discretization scheme for the kinetic Langevin diffusion under strongly log-concavity, and establish improved non-asymptotic and computable upper bounds on the discretization error for this method.
Towards that, we make the following contributions.
\begin{itemize}
\setlength\itemsep{0.05em}
    \item To lay the groundwork for our analysis, we initially delve into the midpoint discretization technique applied to the vanilla Langevin process.
    We establish in~\Cref{thm:rlmc} the convergence guarantees for
RLMC in $\wass_2$-distance. 
    These guarantees are competitive with the best available results for LMC, and could be leveraged to derive an improved upper bound specifically tailored for RKLMC.
    \item We further extend these techniques to RKLMC, and provide the corresponding convergence guarantees in $\wass_2$-distance in Theorem~\ref{thm:rklmc}.
    Compared to the previous works, our bound \textbf{a)} contains explicit and small constants, \textbf{b)} does not require the initialization to be at minimizer of the potential, \textbf{c)} and is free from the linear dependence on the sample size.
    
    \item Employing the same techniques, we finally examine the convergence behavior of the KLMC algorithm with the Euler-Maruyama discretization.
    In \Cref{thm:klmc1}, we provide an upper 
    bound on the accuracy of this scheme in $\wass_2$-distance with improved 
    dependence on the condition number.
\end{itemize}

\textbf{Notation.}
Denote the $p$-dimensional Euclidean space by $\RR^p$.
The letter $\btheta$ denotes the deterministic vector and its calligraphic counterpart 
$\bvartheta$ denotes the random vector.
We use $\bfI_p$ and $\mathbf 0_p$ to denote, respectively, the $p \times p$ identity and zero matrices.  
Define the relations $\bfA
\preccurlyeq\bfB$ and $\bfB\succcurlyeq \bfA$ 
for two symmetric $p \times p$ matrices $\bfA$ 
and $\bfB$ to mean that $\bfB - \bfA$ is 
semi-definite positive. 
The gradient and the Hessian of a function $f:\RR^p \rightarrow \RR$ are denoted by $\nabla f$  and 
 $\nabla^2 f$, respectively.
Given any pair of measures $\mu$ and $\nu$ defined on $(\RR^p,\mathcal{B}(\RR^p))$,
the Wasserstein-2 distance between $\mu$ and $\nu$ is defined as
\begin{equation}
\wass_2(\mu,\nu) = \Big(\inf_{\varrho\in \Gamma(\mu,\nu)} \int_{\RR^p\times \RR^p}
\|\btheta -\btheta'\|_2^2 \,\rmd\varrho(\btheta,\btheta')\Big)^{1/2},
\end{equation}
where the infimum is taken over all joint distributions $\varrho$ that have $\mu$ and $\nu$ as marginals. 

\section{Understanding the randomized midpoint 
discretization: the vanilla Langevin diffusion}

The goal is to sample a random vector in 
$\mathbb R^p$ according to a given distribution $\pi$
of the form
\begin{align}
    \pi(\btheta)\propto \exp\{-f(\btheta)\},\qquad 
    \btheta\in\mathbb R^p,
\end{align}
with a function $f:\RR^p\to\RR$, referred to as the potential.
Throughout the paper, we assume that the 
potential function $f$ is $M$-smooth 
and $m$-strongly convex for some constants 
$0<m\leqslant M<\infty$. 


\begin{assumption}\label{asm:A-scgl}
   The function $f: \mathbb R^p\to\mathbb R$ 
   is twice differentiable, and its Hessian 
   matrix $\nabla^2 f$ satisfies 
\begin{align}
    m\bfI_p\preccurlyeq \nabla^2 f(\btheta)
    \preccurlyeq M\bfI_p,\qquad \forall 
    \btheta\in\mathbb R^p.
\end{align}
\end{assumption}
Let $\bvartheta_0$ be a random vector drawn from a 
distribution $\nu$ on $\mathbb R^p$ and let $\bW 
=(\bW_t: t\geqslant 0)$ be a $p$-dimensional 
Brownian motion independent of $\bvartheta_0$. 
Using the potential $f$, the random variable 
$\bvartheta_0$ and the process $\bW$, one can define 
the stochastic differential equation
\begin{align}\label{eq:LD}
    d\bL_t^{\sf LD} = -\nabla f(\bL_t^{\sf LD})\,dt 
    + \sqrt{2}\,\rmd\bW_t,
    \qquad t\geqslant 0,\qquad \bL^{\sf LD}_0=\bvartheta_0.
\end{align}
This equation has a unique strong solution, which 
is a continuous-time Markov process, termed Langevin 
diffusion. Under some further assumptions on $f$, 
such as strong convexity or dissipativity, the 
Langevin diffusion is ergodic, geometrically
mixing and has $\pi$ as its unique invariant 
distribution \citep{bhattacharya1978}. 
Furthermore, the mixing properties
 of this process can be quantified. For instance, 
if $\pi$ satisfies the Poincar\'e inequality with
constant $C_{\textsf{P}}$, then (see e.g. \cite{chewi2020exponential}) the distribution
$\nu_t^{\sf LD}$ of $\bL^{\sf LD}_t$ satisfies
\begin{align}
    \wass_2(\nu_t^{\textsf{LD}},\pi) \leqslant e^{-t/
    C_{\textsf{P}}} 
    \sqrt{2\smash[b]{C_{\sf P}\chi{}^2(\nu\|\pi)}},\qquad \forall t
    \geqslant 0. 
\end{align}
These results suggest that we can sample from the 
distribution $\pi$ by using a suitable discretization 
of the Langevin diffusion. The Langevin Monte Carlo 
(LMC) method is based on this idea, combining the 
aforementioned considerations with the Euler 
discretization. Specifically, for small values of 
$h \geqslant 0$ and $\Delta_h\bW_t = \bW_{t+h} - 
\bW_t$, the following approximation holds
\begin{align}
    \bL^{\sf LD}_{t+h} & = \bL^{\sf LD}_t- \int_0^h  
    \nabla f(\bL_{t+s}^{\sf LD})\,\rmd s + \sqrt{2}\;\Delta_h\bW_t \approx \bL_t^{\sf LD} - h  
    \nabla f(\bL_t^{\sf LD}) + \sqrt{2}\;\Delta_{h}
    \bW_t. 
\end{align}
By repeatedly applying this approximation with a small 
step-size $h$, we can construct a Markov chain 
$(\bvartheta^{\sf LMC}_k:k\in\mathbb N)$ that converges 
to the target distribution $\pi$ as $h$ goes to zero. 
More precisely, $\bvartheta^{\sf LMC}_k\approx \bL^{
\sf LD}_{kh}$, for $k\in\mathbb N$, is given by
\begin{align}
    \bvartheta_{k+1}^{\sf LMC} = \bvartheta_k^{\sf LMC} 
    -h\nabla f(\bvartheta_k^{\sf LMC})
    +\sqrt{2}\,(\bW_{(k+1)h}-\bW_{kh}).
\end{align}
This method is computationally efficient and has been 
widely used in statistics and machine learning for 
sampling from high-dimensional distributions \citep{gal2016dropout,izmailov2020subspace,izmailov2021bayesian}. To 
assess the discretization error, consider the case 
where $\bL^{\sf LD}_0$ is drawn from the invariant 
distribution $\pi$ and note that 
\begin{align}
    \bL^{\sf LD}_{(k+1)h} - \bvartheta^{\sf LMC}_{k+1} 
    & = \bL^{\sf LD}_{kh} -\bvartheta^{\sf LMC}_k - 
    \int_0^h \nabla f(\bL^{\sf LD}_{kh+s}) \,\rmd s +
    h\nabla f(\bvartheta^{\sf LMC}_k)\\
    & = \bL^{\sf LD}_{kh} - \bvartheta^{\sf LMC}_k -
    h\big(\nabla f (\bL^{\sf LD}_{kh}) -\nabla f( 
    \bvartheta^{\sf LMC}_k)\big) - \bzeta_k,
    \label{eq:err1}
\end{align}
where $\bzeta_k = \int_0^h \big(\nabla f(\bL^{\sf 
LD}_{kh+s}) - \nabla f(\bL^{\sf LD}_{kh})\big)\,\rmd s$ 
is a zero-mean random ``noise'' vector. Previous work 
on LMC demonstrated that the squared $\mathbb L_2$ norm 
of $\bzeta_k$ is of order $M^2h^3p$, 
whereas the term $ \bL^{\sf LD}_{kh} - \bvartheta^{\sf 
LMC}_k-h\big(\nabla f(\bL^{\sf LD}_{kh}) - \nabla f 
(\bvartheta^{\sf LMC}_k) \big)$ satisfies the contraction
inequality
\begin{align}
    \big\| \bL^{\sf LD}_{kh} - \bvartheta^{\sf LMC}_{k} 
    - h\big(\nabla f (\bL^{\sf LD}_{kh}) -\nabla f 
    (\bvartheta^{\sf LMC}_{k})\big)\big \|^2_{\mathbb 
    L_2}\leqslant (1-mh)^2\|\bL^{\sf LD}_{kh} 
    -\bvartheta^{\sf LMC}_{k}\|_{\Ltwo}^2.\label{eq:err2}
\end{align}
If we denote by $r_k$ the correlation between $\bzeta_k$
and $\bL^{\sf LD}_{kh} - \bvartheta^{\sf LMC}_{k}$, and 
by $\text{Err}_k$ the error $\|\bL^{\sf LD}_{kh} -
\bvartheta^{\sf LMC}_{k} \|_{\Ltwo}$, we infer 
from \eqref{eq:err1} and \eqref{eq:err2} that
\begin{align}
    \text{Err}_{k+1}^2 &\leqslant (1-mh)^2\text{Err}_k^2 
    + C  Mh r_k \text{Err}_k \sqrt{hp} + C M^2 h^3 p,
\end{align}
for some universal constant $C$. If we were able
to check that $r_k$ is small enough so that the 
second term of the right-hand side can be neglected,
we would get $\text{Err}_{k+1}^2 \leqslant (1-mh)^2 
\text{Err}_k^2 + C M^2 h^3 p$, which would eventually
lead to $\text{Err}_{k+1}^2 \leqslant (1-mh)^{2k}
\text{Err}_1^2 + C M^2 h^2 (p/m)$. This would amount
to
\begin{align}
    |r_k|\ll 1\qquad \Longrightarrow\qquad\text{Err}_{k+1} 
    \leqslant 
    (1-mh)^{k} \,\text{Err}_1 + C M h \sqrt{p/m}.
    \label{eq:err3}
\end{align}
Unfortunately, without any additional conditions on
$f$, the correlation $r_k$ cannot be shown to be
small, and one can only deduce from \eqref{eq:err2}
that $\text{Err}_{k+1}\leqslant (1-mh)\text{Err}_k 
+ C M h \sqrt{ph}$, which eventually yields
\begin{align}
    |r_k|\not\ll 1\qquad\Longrightarrow\qquad 
    \text{Err}_{k+1} \leqslant (1-mh)^{k} \,
    \text{Err}_1 + C (M/m)  \sqrt{ph}.\label{eq:err4}
\end{align}
This inequality is established under the standard
assumption $Mh\leqslant 2$, which implies that the 
last term in \eqref{eq:err3} is 
significantly smaller than \eqref{eq:err4}. To 
get such an error deflation, we need the correlations
$r_k$ to be small. While this is not guaranteed for the
Euler discretization, we will see that the randomized 
midpoint method allows us to achieve such a reduction.

Let $U$ be a random variable uniformly distributed in
$[0,1]$ and independent of the Brownian motion $\bW$. 
The randomized midpoint method exploits the approximation
\begin{align}
    \bL^{\sf LD}_{t+h}=\bL^{\sf LD}_t - \int_0^h 
    \nabla f(\bL^{\sf LD}_{t+s})\, \rmd s + \sqrt{2}\;\Delta_h\bW_t \approx \bL^{\sf LD}_t 
    - h \nabla f(\bL^{\sf LD}_{t+hU})+
    \sqrt{2}\;\Delta_h\bW_t.
\end{align}
The noise counterpart of $\bzeta_k$ in this case is
$\bzeta_k^{\sf R}= \int_0^h \nabla f(\bL^{\sf LD}_{t+s})
\,\rmd s-\nabla f(\bL^{\sf LD}_{t+Uh})$. It is clearly 
centered and uncorrelated with all the random
vectors independent of $U$ such as $\bL^{\sf LD}_{kh}$, 
$\bvartheta^{\sf LMC}_k$ and the gradient of $f$ 
evaluated at these points. 

The explanation above provides the intuition of 
the randomized midpoint method, and a hint to
why it is preferable to the Euler discretization,
but it cannot be taken as a formal definition of
the method. The formal definition of  the 
randomized midpoint method for Langevin Monte 
Carlo (RLMC) is defined as follows: at each 
iteration $k=1,2,\ldots$, 
\vspace{-5pt}
\begin{enumerate}
\setlength\itemsep{0.05em}
    \item we randomly, and independently of all 
    the variables generated during the previous 
    steps, generate a pair of random vectors
    $(\bxi'_k,\bxi''_k)$ and a random variable 
    $U_k$ such that
    \begin{itemize}
        \item $U_k$ is uniformly distributed in 
        $[0,1]$ and independent of $(\bxi'_k, 
        \bxi''_k)$,
        \item $(\bxi'_k,\bxi''_k)$ are 
        independent $\mathcal N_p(0,\bfI_p)$.
    \end{itemize} 
    \item 
    we set $\bxi_k = \sqrt{U_k}\,\bxi_k' + 
    \sqrt{1-U_k}\bxi''_k$ and define the 
    $(k+1)$th iterate $\bvartheta^{\sf RLMC}$ by 
    \begin{align} 
        \bvartheta_{k+U}^{\sf RLMC} &=  
        \bvartheta_k^{\sf RLMC} - hU_k \nabla
        f(\bvartheta_k^{\sf RLMC}) + \sqrt{2h U_k}
        \, \bxi'_{k} \label{meth-mid1}\\
        \bvartheta_{k+1}^{\sf RLMC} & = 
        \bvartheta_k^{\sf RLMC}  - h\nabla f 
        (\bvartheta_{k+U}^{\sf RLMC}) + \sqrt{2h}
        \, \bxi_{k}.\label{meth-mid2}
    \end{align}
\end{enumerate}
With a small step-size $h$ and a large number 
of iterations $n$, the distribution of 
$\bvartheta_n^{\sf RLMC}$ can closely approximate 
the target distribution $\pi$. In a smooth and 
strongly convex setting, it is even possible to 
obtain a reliable estimate of the sampling error, 
as demonstrated in the following theorem (the 
proof is included in the supplementary material).

If the step-size $h$ is small and the number of
iterations $n$ is large, the distribution of 
$\bvartheta_n^{\sf RLMC}$ is a close to the target
$\pi$. Interestingly, in the smooth and strongly
convex setting it is possible to get a good 
evaluation of the error of sampling as shown
in the next theorem (the proof is deferred to
the supplementary material).  

\begin{theorem}
\label{thm:rlmc}
    Assume the function $f:\RR^p\to \RR$ satisfies  Assumption~\ref{asm:A-scgl}. 
    Let 
    $h$ be such that $Mh + \sqrt{\kappa}\,(Mh)^{3/2} 
    \leqslant 1/4$ with $\kappa = M/m$. Then, 
    every $n\geqslant 1$, the distribution 
    $\nu_n^{\sf RLMC}$ of ~$\bvartheta_n^{\sf RLMC}$ 
    satisfies 
    \begin{align}\label{ineq:RLMC}
        \wass_2(\nu_n^{\sf RLMC}, \pi) &\leqslant  
        1.11 e^{-mnh/2} \wass_2(\nu_0,\pi)   + 
        \big(2.4 \sqrt{\kappa Mh}  + 1.77\big) 
        Mh\sqrt{p/m}.
    \end{align}
\end{theorem}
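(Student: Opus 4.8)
The plan is to run a one-step synchronous (reflection-free) coupling between $\bvartheta_k^{\sf RLMC}$ and a copy $\bL_{kh}^{\sf LD}$ of the diffusion started from $\pi$, driven by the same Brownian increments and the same auxiliary randomness $(U_k,\bxi'_k,\bxi''_k)$, and to derive a contraction-plus-noise recursion of the form $\mathrm{Err}_{k+1}^2 \le (1-mh/2+\ldots)\,\mathrm{Err}_k^2 + (\text{bias terms})$, where $\mathrm{Err}_k = \wass_2(\nu_k^{\sf RLMC},\pi)$ is realized by $\|\bL_{kh}^{\sf LD}-\bvartheta_k^{\sf RLMC}\|_{\Ltwo}$ under the optimal coupling at step $k$. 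Writing $\bL_{(k+1)h}^{\sf LD} - \bvartheta_{k+1}^{\sf RLMC}$ via \eqref{meth-mid2}, I would decompose the difference into (i) the ``ideal midpoint'' part $\bL_{kh}^{\sf LD}-\bvartheta_k^{\sf RLMC} - h\big(\nabla f(\bL_{kh+Uh}^{\sf LD})-\nabla f(\bvartheta_{k+U}^{\sf RLMC})\big)$, (ii) the diffusion-side midpoint noise $\bzeta_k^{\sf R} = \int_0^h \nabla f(\bL_{kh+s}^{\sf LD})\,\rmd s - \nabla f(\bL_{kh+Uh}^{\sf LD})$, which is centered and, crucially, conditionally uncorrelated with everything not depending on $U_k$, and (iii) the gap between $\bvartheta_{k+U}^{\sf RLMC}$ as actually defined in \eqref{meth-mid1} and the point $\bvartheta_k^{\sf RLMC} - hU_k\nabla f(\bvartheta_k^{\sf RLMC}) + \sqrt{2hU_k}\,\bxi'_k$ used in the coupling (these coincide by construction, so this term is only about matching the Brownian bridge between the two processes).

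The key estimates are then: contraction of term (i), using $M$-smoothness and $m$-strong convexity, giving a factor like $(1-mh+M^2h^2)$ times $\mathrm{Err}_k$ once we also control $\|\bvartheta_{k+U}^{\sf RLMC}-\bL_{kh+Uh}^{\sf LD}\|$ by $\mathrm{Err}_k$ plus an $O(Mh^{3/2}\sqrt{p})$ one-step error (this is where the $\sqrt{\kappa}(Mh)^{3/2}$ in the step-size condition enters, ensuring the midpoint point itself is well-approximated); a second-moment bound $\E\|\bzeta_k^{\sf R}\|^2 \lesssim M^2h^3p$, obtained by writing $\bzeta_k^{\sf R}$ as a stochastic integral and bounding increments of $\nabla f(\bL^{\sf LD})$ along the diffusion in $\Ltwo$ (using that $\pi$ is the stationary law, so $\E\|\nabla f(\bL_t^{\sf LD})\|^2 = \E_\pi\|\nabla f\|^2 \le Mp$, plus an Itô/BDG estimate for the fluctuation part); and — the point of randomization — the cross term $\E\langle \text{(i)}, \bzeta_k^{\sf R}\rangle$ vanishes or is of smaller order because conditioning on all step-$k$ variables and on $\bxi'_k$ but integrating over $U_k$ kills the correlation, exactly the ``$|r_k|\ll 1$'' phenomenon highlighted in \eqref{eq:err3}. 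Assembling these yields $\mathrm{Err}_{k+1}^2 \le (1-mh/2)\,\mathrm{Err}_k^2 + c_1 M^2h^3 p + c_2 M^2 h^3 (Mh)\kappa\, p$ with explicit $c_i$, and unrolling the geometric recursion with $\sum_k (1-mh/2)^k \le 2/(mh)$ produces the stated bound, with $e^{-mnh/2}$ from $(1-mh/2)^n$, the $1.77\,Mh\sqrt{p/m}$ from the $M^2h^3p$ noise term, and the $2.4\sqrt{\kappa Mh}\,Mh\sqrt{p/m}$ from the higher-order midpoint-approximation term; the constant $1.11$ absorbs the $\sqrt{1 + O(Mh)}$ slack from not having a pure $(1-mh/2)$ factor.

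The main obstacle will be making the cross-term cancellation quantitatively airtight with small explicit constants: term (i) is not independent of $U_k$, since $\bvartheta_{k+U}^{\sf RLMC}$ and $\bL_{kh+Uh}^{\sf LD}$ both depend on $U_k$, so the correlation is not exactly zero but only small — I would handle this by conditioning on the $\sigma$-algebra generated by the step-$k$ state and $(\bxi'_k,\bxi''_k)$, Taylor-expanding $\nabla f(\bvartheta_{k+U}^{\sf RLMC}) - \nabla f(\bL_{kh+Uh}^{\sf LD})$ around the $U_k$-independent anchor $\bvartheta_k^{\sf RLMC}-\bL_{kh}^{\sf LD}$, and showing the $U_k$-dependent remainder contributes only at order $Mh\cdot Mh^{3/2}\sqrt p$ to the cross term, i.e. genuinely lower order. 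A secondary technical point is the careful $\Ltwo$ control of $\bzeta_k^{\sf R}$ and of $\bL_{kh+Uh}^{\sf LD}-\bL_{kh}^{\sf LD}$ along the stationary diffusion; here stationarity is what keeps all moments dimension-linear and constants clean. Everything else — the strong-convexity contraction, the Gaussian moment bounds for $\bxi_k$, and the geometric summation — is routine bookkeeping.
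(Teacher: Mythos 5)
Your high-level strategy (synchronous coupling, exploit the $U_k$-randomization to decorrelate the noise, sum a geometric recursion) is the right one, but the specific decomposition you propose runs into a real obstruction that the paper's proof is constructed precisely to avoid.

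The problem is in the contraction of your term (i). Writing $\nabla f(\bL^{\sf LD}_{kh+Uh}) - \nabla f(\bvartheta_{k+U}) = \tilde\bfH\,(\bL^{\sf LD}_{kh+Uh}-\bvartheta_{k+U})$ and $\bL^{\sf LD}_{kh+Uh}-\bvartheta_{k+U} = (\bfI - hU\hat\bfH)(\bL^{\sf LD}_{kh}-\bvartheta_k) - \int_0^{Uh}\big(\nabla f(\bL^{\sf LD}_{kh+s})-\nabla f(\bL^{\sf LD}_{kh})\big)\,\rmd s$, where $\tilde\bfH$ and $\hat\bfH$ are averaged Hessians in $[m\bfI_p, M\bfI_p]$, your term (i) becomes $\big(\bfI-h\tilde\bfH(\bfI-hU\hat\bfH)\big)(\bL^{\sf LD}_{kh}-\bvartheta_k)$ plus a small remainder. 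Since $\tilde\bfH$ and $\hat\bfH$ need not commute (that would require $f$ quadratic or higher-order smoothness, which is not assumed), the best operator-norm bound you can write is $\|\bfI-h\tilde\bfH(\bfI-hU\hat\bfH)\|\leqslant 1-mh+M^2h^2$, which is exactly the $(1-mh+M^2h^2)$ you quote. But the theorem's step-size condition $Mh+\sqrt\kappa(Mh)^{3/2}\leqslant 1/4$ allows $Mh$ of order $\kappa^{-1/3}$, under which $M^2h^2 = \kappa\, mh\cdot Mh \gg mh$; your one-step ``contraction'' factor then exceeds $1$ and the geometric recursion collapses. This is not a bookkeeping issue — without additional structure, the discretized midpoint map simply does not contract in the regime where the theorem applies.

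The paper sidesteps this entirely by introducing, at each step $k$, a shadow diffusion $\bL'$ starting from $\bvartheta_k$ and driven by the same Brownian increments, and routing all the contraction through the identity $\|\bL'_h-\bL^{\sf LD}_{(k+1)h}\|_{\Ltwo}\leqslant e^{-mh}\|\bvartheta_k-\bL^{\sf LD}_{kh}\|_{\Ltwo}$, which is a comparison between two \emph{continuous-time} trajectories and therefore carries no discretization loss. What remains is the one-step discretization error $\|\bar\bvartheta_{k+1}-\bL'_h\|_{\Ltwo}$ (Lemma~\ref{lem:A2}, of order $(Mh)^2(h\|\nabla f_k\|+\sqrt{hp})$) and the fluctuation $\|\bvartheta_{k+1}-\bar\bvartheta_{k+1}\|_{\Ltwo}$.

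Two further differences, less fatal but worth noting. First, the paper centers on the algorithm side, defining $\bar\bvartheta_{k+1}=\E[\bvartheta_{k+1}\mid\bvartheta_k,\bW,\bL_0]$; because $\bL^{\sf LD}_{(k+1)h}$ is $U_k$-independent, the split $x_{k+1}^2=\|\bvartheta_{k+1}-\bar\bvartheta_{k+1}\|_{\Ltwo}^2 + \|\bar\bvartheta_{k+1}-\bL^{\sf LD}_{(k+1)h}\|_{\Ltwo}^2$ is an exact Pythagoras. Your diffusion-side noise $\bzeta_k^{\sf R}$ is conditionally centered, but your term (i) still depends on $U_k$ (through $\tilde\bfH$, $\hat\bfH$, $\bvartheta_{k+U}$), so the cross term is genuinely nonzero and must be bounded by a Taylor expansion — extra constants and extra work that the paper's exact orthogonality eliminates. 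Second, you propose to control gradient increments along the diffusion with the stationary bound $\E\|\nabla f(\bL^{\sf LD}_t)\|^2\leqslant Mp$; the paper instead needs $\|\nabla f(\bvartheta_k)\|$ (an algorithm-side quantity) and therefore proves the discounted-sum descent lemma (Lemma~\ref{lem:A3}) under the Polyak-Lojasiewicz condition. If you fix the contraction issue by inserting a shadow diffusion as above, you will find that the quantity to control is also $\|\nabla f(\bvartheta_k)\|$, and stationarity alone will not suffice; you will effectively need Lemma~\ref{lem:A3} or something equivalent.
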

Prior to discussing the relation of the above
error estimate to those available in the literature,
let us state a consequence of it.
\begin{corollary}\label{cor:1}
    Let $\varepsilon \in(0,1)$ be a small number. 
    If we choose $h>0$ and $n\in\mathbb N$ so that 
    \begin{align}
        Mh =  \frac{\varepsilon}{1.5 + (6.5\kappa 
        \varepsilon)^{1/3}},
    \quad
    \text{and} 
    \quad
        n\geqslant \bigg(\frac{3\kappa}{\varepsilon} 
        + \frac{3.8\kappa^{4/3}}{\varepsilon^{2/3}}
        \bigg)\bigg(
        \log(20/\varepsilon) + \frac12\log\Big(
        \frac{m}{p}\wass_2^2(\nu_0,\pi)\Big)\bigg)
    \end{align}
    then\footnote{This follows from the fact that
    $(6\kappa/{\varepsilon}) + 4.2\kappa^{4/3}/{
    \varepsilon^{2/3}}\leqslant 2\kappa / (Mh) = 2/mh$.} 
    we have $\wass_2(\nu_n^{\sf RLMC}, 
    \pi) \leqslant \varepsilon\sqrt{p/m}$.
\end{corollary}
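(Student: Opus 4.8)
The plan is to apply \Cref{thm:rlmc} to the prescribed pair $(h,n)$ and to bound each of the two terms in \eqref{ineq:RLMC} by a fixed fraction of $\varepsilon\sqrt{p/m}$, the two fractions summing to at most one. Write $b:=(6.5\kappa\varepsilon)^{1/3}$, so that the prescription reads $Mh=\varepsilon/(1.5+b)$ with $\kappa\varepsilon=b^{3}/6.5$. First I would check that $h$ is admissible for \Cref{thm:rlmc}, i.e. $Mh+\sqrt{\kappa}\,(Mh)^{3/2}\leqslant 1/4$: here $Mh\leqslant\varepsilon/1.5$ and $\sqrt{\kappa}\,(Mh)^{3/2}=b^{3/2}\varepsilon/(\sqrt{6.5}\,(1.5+b)^{3/2})\leqslant\varepsilon/\sqrt{6.5}$ because $b^{3/2}\leqslant(1.5+b)^{3/2}$, so the left-hand side is at most $(1/1.5+1/\sqrt{6.5})\,\varepsilon<1.06\,\varepsilon$, which is $\leqslant 1/4$ once $\varepsilon$ lies below an explicit absolute threshold — this is the content of ``$\varepsilon$ small''. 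Then \Cref{thm:rlmc} is in force.

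For the bias term, the point of this particular $h$ is that substituting $Mh=\varepsilon/(1.5+b)$ and $\kappa Mh=b^{3}/(6.5(1.5+b))$ makes it free of the condition number:
\[
\big(2.4\sqrt{\kappa Mh}+1.77\big)Mh\,\sqrt{p/m}=\varepsilon\sqrt{p/m}\;\psi(b),\qquad
\psi(b):=\frac{1.77}{1.5+b}+\frac{2.4}{\sqrt{6.5}}\cdot\frac{b^{3/2}}{(1.5+b)^{3/2}}.
\]
It then remains to bound $c_{1}:=\sup_{b\geqslant 0}\psi(b)$ by a constant strictly below one, which is a one-variable estimate: the first summand decreases, the second increases and is bounded by $2.4/\sqrt{6.5}$, so only the limit $b\to\infty$ and possibly one interior critical point must be checked. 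The numbers $1.5$ and $6.5$ in the definition of $h$ are tuned precisely so that $c_{1}$ stays safely below one.

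For the transient term, the choice of $n$ forces $mnh\geqslant 2L$, where $L:=\log(20/\varepsilon)+\tfrac12\log(\tfrac{m}{p}\wass_2^{2}(\nu_0,\pi))$ is the bracketed factor. Indeed $1/(mh)=\kappa(1.5+b)/\varepsilon=1.5\kappa/\varepsilon+6.5^{1/3}\kappa^{4/3}/\varepsilon^{2/3}$, hence $2/(mh)=3\kappa/\varepsilon+2\cdot 6.5^{1/3}\kappa^{4/3}/\varepsilon^{2/3}\leqslant 3\kappa/\varepsilon+3.8\,\kappa^{4/3}/\varepsilon^{2/3}$ (since $2\cdot 6.5^{1/3}<3.8$), which is exactly the coefficient of $L$ in the lower bound on $n$; multiplying $n\geqslant(3\kappa/\varepsilon+3.8\,\kappa^{4/3}/\varepsilon^{2/3})L$ through by $mh$ gives $mnh\geqslant 2L$ (trivially so when $L\leqslant 0$). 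Since $L=\log(\tfrac{20}{\varepsilon}\sqrt{m/p}\,\wass_2(\nu_0,\pi))$, this yields $e^{-mnh/2}\leqslant e^{-L}=\tfrac{\varepsilon}{20}\sqrt{p/m}/\wass_2(\nu_0,\pi)$, and therefore $1.11\,e^{-mnh/2}\wass_2(\nu_0,\pi)\leqslant\tfrac{1.11}{20}\,\varepsilon\sqrt{p/m}$.

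Adding the two contributions gives $\wass_2(\nu_n^{\sf RLMC},\pi)\leqslant(c_{1}+\tfrac{1.11}{20})\,\varepsilon\sqrt{p/m}\leqslant\varepsilon\sqrt{p/m}$, the claimed bound. The admissibility check and the transient estimate are routine; the delicate point is the bias estimate in the second step, since the whole argument relies on $\sup_{b\geqslant 0}\psi(b)+1.11/20$ not exceeding one — that is, on the numerical values $1.5,6.5,3,3.8,20$ being correctly balanced against the constants $2.4,1.77,1.11$ that appear in \Cref{thm:rlmc}.
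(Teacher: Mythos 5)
Your framework is the correct one and matches what the paper gestures at: verify admissibility of $h$, split the bound from \Cref{thm:rlmc} into transient and bias parts, bound the transient part by $\frac{1.11}{20}\varepsilon\sqrt{p/m}$ via $mnh\geqslant 2L$, and reduce the bias part to $\varepsilon\sqrt{p/m}\,\psi(b)$ with $b=(6.5\kappa\varepsilon)^{1/3}$. Your transient-term calculation is right, and it fixes what the paper's footnote garbles: the useful inequality is $3\kappa/\varepsilon+3.8\kappa^{4/3}/\varepsilon^{2/3}\geqslant 2/(mh)=3\kappa/\varepsilon+2\cdot 6.5^{1/3}\kappa^{4/3}/\varepsilon^{2/3}$, not the reversed one printed in the footnote.

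However, there is a genuine gap in the step you flag as ``delicate'' and then skip. You argue that since the first summand of $\psi$ decreases and the second increases, only the limit $b\to\infty$ and a possible interior critical point need to be examined; that reasoning omits the left endpoint $b=0$, which is precisely where the decreasing summand attains its supremum. Concretely, $\psi(0)=1.77/1.5\approx 1.18$, and computing
\[
\psi'(b)=-\frac{1.77}{(1.5+b)^2}+\frac{2.25\,(2.4/\sqrt{6.5})\sqrt{b}}{(1.5+b)^{5/2}}
\]
shows $\psi'(0^+)<0$ with a unique zero near $b\approx 3.5$, which is therefore a \emph{minimum}; hence $\sup_{b\geqslant 0}\psi(b)=\psi(0)\approx 1.18$. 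The regime $b\to 0$ is reachable (take $\kappa$ of moderate size and $\varepsilon$ small, e.g.\ $\kappa=10,\ \varepsilon=10^{-5}$), and there the bias term of \Cref{thm:rlmc} alone already exceeds $\varepsilon\sqrt{p/m}$. So the numerical balance your proof hinges on does not hold with the constants exactly as printed; the denominator $1.5+(6.5\kappa\varepsilon)^{1/3}$ in the prescription for $Mh$ would need its constant raised to roughly $1.9$ (then $\psi(0)\leqslant 0.945$ while $\psi(\infty)=2.4/\sqrt{6.5}\approx 0.941$, and adding $1.11/20\approx 0.056$ stays below~$1$). In short: your scaffolding is sound and closely parallels the paper's intended one-line argument, but the one-variable estimate you defer is in fact the heart of the corollary, and carrying it out shows it fails at $b=0$ with the constants as stated — an issue inherited from the paper's corollary rather than introduced by you, but a gap in your proof nonetheless.
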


Our results can be compared to the best available 
results for Langevin Monte Carlo (LMC) under Assumption~\ref{asm:A-scgl}
\cite[Eq. 22]{durmus2019analysis}. We recall that 
LMC is defined by a recursive relation of the
same form as \eqref{meth-mid2}, with the 
only difference that $\nabla f(\bvartheta_{k+U})$ 
is replaced by $\nabla f(\bvartheta_k)$. The tightest 
known bound for LMC is given by 
\begin{align}
    \wass_2(\nu_n^{\sf LMC}, \pi) &\leqslant
    (1 - mh)^{- n/2} \wass_2(\nu_0,\pi) +
    \sqrt{2Mh p/m},
\end{align}
with $Mh\leqslant 1$. By choosing $2Mh = (19/20)^2 
\varepsilon^2$ and 
\begin{align}
    n\geqslant 2.22(\kappa/\varepsilon^2)
    \big\{\log(20/\varepsilon) + \textstyle{\frac12}
    \log \big({\textstyle\frac{m}{p}} \wass_2^2 
    (\nu_0,\pi)\big) \big\},
\end{align}
we can ensure that $\wass_2(\nu_n^{\sf LMC}, \pi)
\leqslant\varepsilon\sqrt{p/m}$. Therefore, our 
bounds imply superior results for RLMC in the regime
of $\kappa$ of smaller order than $\varepsilon^{-4}$.

To the best of our knowledge, the first results on the 
error analysis of RLMC have been obtained in 
\citep{he2020ergodicity}. They derived an upper bound on 
the discretization error (the second term on the 
right-hand side of \eqref{ineq:RLMC}) under the assumption 
that the initial point of the algorithm is the minimizer 
of the potential function $f$. Their bound takes the 
form $C (\sqrt{\kappa M h} +1)Mh\sqrt{p/m}\times\sqrt{mnh}$, 
where $C$ is a universal but unspecified constant. 
Compared to our bound, the one obtained in 
\cite{he2020ergodicity} has an additional factor 
$\sqrt{mnh}$. While this factor may not be very harmful 
in the case of geometric ergodicity where the number of 
iterations $n$ is chosen such that $nmh$ goes to 
infinity at the logarithmic rate $\log(1/\varepsilon)$, 
removing it can be an important step toward extending 
these results to potentials that are not strongly convex.

\section{Randomized midpoint method for the 
kinetic Langevin diffusion}

The randomized midpoint method, introduced and 
studied in \citep{shen2019randomized}, aims at providing
a discretization of the kinetic Langevin process
that reduces the bias of sampling as compared to
more conventional discretizations. Recall that
the kinetic Langevin process $\bL^{\textup{\sf KLD}}$
is a solution to a second-order stochastic
differential equation that can be informally 
written as
\begin{align}\label{KLD:1}
  {\textstyle\frac1{\gamma}}\ddot\bL_t^{\sf KLD} + 
  \dot\bL_t^{\sf KLD} = -\nabla f(\bL_t^{\sf KLD}) 
  + \sqrt{2}\,\dot\bW_t,
\end{align}
with initial conditions $\bL_0^{\sf KLD} = 
\bvartheta_0$ and $\dot\bL_0^{\sf KLD} = \bv_0$. 
In \eqref{KLD:1}, $\gamma>0$, $\bW$ is a standard
$p$-dimensional Brownian motion and dots are used
to design derivatives with respect to time $t
\geqslant 0$. This can be formalized using It\^o's
calculus and introducing the velocity field 
$\bV^{\sf KLD}$ so that the joint process 
$(\bL^{\sf KLD}, \bV^{\sf KLD})$ satisfies
\begin{align}\label{KLD:2}
    \rmd\bL^{\sf KLD}_t = \bV_t^{\sf KLD}\,\rmd t;
    \quad  
    \tfrac1{\gamma}\rmd\bV^{\sf KLD}_t = -\big(\bV_t^{\sf KLD} + 
    \nabla f(\bL_t^{\sf KLD})\big)\,\rmd t + 
    \sqrt{2}\, \rmd \bW_t.
\end{align}
Similar to the vanilla Langevin diffusion \eqref{eq:LD}, 
the kinetic Langevin diffusion $(\bL^{\sf KLD}, \bV^{\sf
KLD})$ is a Markov process that exhibits ergodic 
properties when the potential $f$ is strongly 
convex (see \citep{eberle2019} and references 
therein). The invariant density of this process 
is given by
\begin{align}
    p_*(\btheta_,\bv) \propto \exp\{-f(\btheta) - {\textstyle\frac1{2\gamma}}\|\bv\|^2\}, \qquad
    \text{for all}\quad \btheta,\bv\in\mathbb R^p.
\end{align}
Note that the marginal of $p_*$ corresponds to 
$\btheta$ coincides with the target density $\pi$. 
However, unlike the vanilla Langevin diffusion, 
the kinetic Langevin is not reversible. It is 
interesting to note that the distribution of the 
process $\bL^{\sf KLD}$ approaches that of the 
vanilla Langevin process as $\gamma$ approaches 
infinity (see e.g. \citep{Nelson}). Therefore, 
$\bL^{\sf LD}$ and 
$\bL^{\sf KLD}$ are often referred to as 
overdamped and underdamped Langevin processes, 
respectively (where increasing the friction 
parameter $\gamma$ is characterized as damping). 

The kinetic Langevin diffusion $\bL^{\sf KLD}$ is particularly attractive for sampling because its distribution $\nu_t^{\sf KLD}$ converges to the invariant distribution exponentially fast. This is especially true for strongly convex potentials, as proven in\footnote{For the sake of the self-containedness of this paper, we reproduce the proof of this inequality in \Cref{prop:klmc_contr} deferred to the Appendix.} \cite[Prop.~1]{dalalyan_riou_2018}, where it is shown that the following inequality holds:
\begin{align}
    \wass_2\bigg(\bfC
    \begin{bmatrix}
        \bV_t^{\sf KLD}\\[1pt]
        \bL_t^{\sf KLD}
    \end{bmatrix}
    ,
    \bfC
    \begin{bmatrix}
        \bv\\[1pt]
        \bvartheta
    \end{bmatrix}
    \bigg)
    \leqslant e^{-mt}
    \wass_2\bigg(\bfC
    \begin{bmatrix}
        \bV_0\\[1pt]
        \bL_0
    \end{bmatrix}
    , \bfC
    \begin{bmatrix}
        \bv\\[1pt]
        \bvartheta
    \end{bmatrix}
    \bigg),\quad 
    \bfC = \begin{bmatrix}
        \bfI_p & \mathbf 0_p\\[1pt]
        \bfI_p & \gamma \bfI_p
    \end{bmatrix}
\end{align}
for every $t\geqslant 0$, provided that $\gamma
\geqslant m+M$.

To discretize this continuous-time process and make it applicable to the sampling problem, \cite{shen2019randomized} proposed the following procedure: at each iteration $k=1,2,\ldots$,
\begin{enumerate}
    \item randomly, and independently of all the variables
    generated at the previous steps, generate random 
    vectors $(\bxi_k',\bxi''_k,\bxi_k''')$ and a random
    variable $U_k$ such that
    \begin{itemize}
    \setlength\itemsep{0.05em}
        \item $U_k$ is uniformly distributed in $[0,1]$,
        \item conditionally to $U_k = u$, $(\bxi_k',\bxi''_k,\bxi'''_k)$ has the same
        joint distribution as 
            $\big(\bB_u - e^{-\gamma h u} \bG_u,
                \bB_1 -e^{-\gamma h}\bG_1,
                \gamma e^{-\gamma h }\bG_1
            \big)$,
        where $\bB$ is a $p$-dimensional Brownian motion
        and $\bG_t = \int_0^{t} e^{\gamma h s}\,\rmd 
        \bB_s$.
    \end{itemize}
    \item set $\psi(x) = (1-e^{-x})/x$ and define the 
    $(k+1)$th iterate of $\bvartheta^{\sf RKLMC}$ by
    \begin{align}
        \bvartheta_{k+U} &=\bvartheta_k+  
        U h \psi(\gamma U h) \bv_k - {U h} \big(1 - \psi 
        (\gamma Uh)\big)\nabla f(\bvartheta_k) + 
        \sqrt{2h}\,\bxi'_k\\
        \bvartheta_{k+1} &= \bvartheta_k +  h
        \psi(\gamma h)\bv_k -  \gamma h^2( 1 - U)
        \psi\big(\gamma h(1-U)\big) \nabla f 
        (\bvartheta_{k+U}) + \sqrt{2h} \bxi''_k\\
        \bv_{k+1} &= e^{-\gamma h}\bv_k - {\gamma} h
        e^{- \gamma h(1 -U)} \nabla f(\bvartheta_{k+U}) 
        +  \sqrt{2h}\,\bxi'''_k.
    \end{align}
\end{enumerate}
Although the sequence $(\bv_k^{\sf RKLMC}, \bvartheta_k^{\sf RKLMC})$ approximates $(\bV_{kh}^{\sf KLD}, \bL_{kh}^{\sf KLD})$, it is not immediately apparent. The supplementary material clarifies this point. We state now the main result 
of this paper, providing a simple upper bound for the error 
of the RKLMC algorithm.

\begin{theorem}
\label{thm:rklmc}
Assume the function $f:\RR^p\to \RR$ satisfies  
Assumption~\ref{asm:A-scgl}. Choose
$\gamma$ and $h$ so that $\gamma \geqslant 5M$ 
and  $\gamma h \leqslant 0.1\kappa^{-1/6}$, where $\kappa = M/m$. 
Assume that 
$\bvartheta_0$ is independent of $\bv_0$ and that
$\bv_0\sim\mathcal N_p(0,\gamma\bfI_p)$. Then, for any 
$n \geqslant 1$,
the distribution $\nu_n^{\textup{\sf RKLMC}}$ of 
$\bvartheta_n^{\textup{\sf RKLMC}}$ satisfies
\begin{alignat}{2}
    \wass_2(\nu_n^{\textup{\sf RKLMC}},\pi) &
    \leqslant 1.6\varrho^n  \wass_2(\nu_0,\pi) 
    &\,+\,& 0.1 \sqrt{\varrho^n\mathbb E[f(\bvartheta_0) 
    - f(\btheta_*)]/m}\\
    & &\,+\,&  0.2(\gamma h)^3\sqrt{\kappa p/m} 
     + 10(\gamma h)^{3/2}  \sqrt{p/m}\,,
\end{alignat}
where $\varrho  = \exp(-mh),$ and $\btheta_*=\arg\min_{\btheta\in\RR^p} f(\btheta)$.
\end{theorem}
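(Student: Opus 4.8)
The plan is to mimic the coupling/contraction strategy that was sketched for RLMC in the excerpt, but now carried out in the transformed coordinates $\bfC[\bV;\bL]$ in which the kinetic Langevin diffusion contracts at rate $e^{-mt}$ (the inequality from \cite[Prop.~1]{dalalyan_riou_2018} reproduced as \Cref{prop:klmc_contr}). Concretely, I would run the continuous kinetic Langevin diffusion $(\bL^{\sf KLD}_t,\bV^{\sf KLD}_t)$ started from the stationary pair $(\bvartheta,\bv)\sim p_*$, and couple it synchronously with the RKLMC iterates by using the very same Brownian motion increments and the same auxiliary variables $U_k$; the Gaussian vectors $(\bxi'_k,\bxi''_k,\bxi'''_k)$ are by construction exactly the stochastic integrals appearing in the exact solution of \eqref{KLD:2} over $[kh,(k+1)h]$, so the discretization is an \emph{exact} one-step update \emph{except} that $\nabla f$ is frozen/midpoint-evaluated rather than integrated along the true trajectory. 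I would write $\bvartheta_{k}^{\sf RKLMC}-\bL^{\sf KLD}_{kh}$ and $\bv_k^{\sf RKLMC}-\bV^{\sf KLD}_{kh}$ as the true one-step flow applied to the previous discrepancy, plus a ``gradient-mismatch'' remainder, and then push everything through the matrix $\bfC$.

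The second step is to decompose the one-step error, exactly as in \eqref{eq:err1}, into (i) a contractive main term — here $e^{-mh}$ times the previous $\bfC$-weighted error, by \Cref{prop:klmc_contr} applied over a time interval of length $h$ together with the $M$-Lipschitzness of $\nabla f$ to control the difference between the true flow and the ``gradient frozen at $\bvartheta_{k+U}$'' flow — and (ii) a noise term $\bzeta_k^{\sf R}$, which is the difference between $\int_{kh}^{(k+1)h}$ of the true $\nabla f$ along the diffusion and the midpoint value $\nabla f(\bL^{\sf KLD}_{kh+Uh})$. The crucial structural fact, already flagged in the excerpt, is that $\bzeta_k^{\sf R}$ is centered and \emph{uncorrelated} with every quantity that does not depend on $U_k$, in particular with the accumulated error $\bfC([\bv_k;\bvartheta_k]-[\bV^{\sf KLD}_{kh};\bL^{\sf KLD}_{kh}])$. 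Hence when squaring the recursion the cross term vanishes in expectation and I obtain a clean inequality of the form $\mathrm{Err}_{k+1}^2 \leqslant (1-c\,mh)\,\mathrm{Err}_k^2 + (\text{variance of }\bzeta_k^{\sf R}) + (\text{a }O((Mh)^2)\text{ cross-variance absorbed into the contraction})$, where $\mathrm{Err}_k := \wass_2$-type $\Ltwo$ distance in $\bfC$-coordinates. The two error scales $(\gamma h)^3\sqrt{\kappa p/m}$ and $(\gamma h)^{3/2}\sqrt{p/m}$ in the statement should emerge respectively from the bias-type part (second moment of the Taylor remainder of $\nabla f$ along the flow over $[0,h]$, which is $O(M^2 h^4 \cdot \text{stuff})$) and from the genuinely stochastic fluctuation of the midpoint rule around its integral (an $O(h^3 p)$ variance after dividing through by the $mh$ contraction gap, i.e. $\sqrt{h^3 p / (mh)} = h\sqrt{h p/m}$, rescaled by $\gamma$).

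For the transient terms: the factor $1.6\varrho^n\wass_2(\nu_0,\pi)$ comes from iterating the contraction $n$ times and noting $\bfC$ has bounded condition number when $\gamma\geqslant 5M\geqslant m+M$, so passing between the $\bfC$-weighted norm and the plain $\wass_2$ on the $\btheta$-marginal costs only an absolute constant under the stated step-size restriction $\gamma h\leqslant 0.1\kappa^{-1/6}$. The term $0.1\sqrt{\varrho^n\,\E[f(\bvartheta_0)-f(\btheta_*)]/m}$ is the price of \emph{not} assuming $\bvartheta_0=\btheta_*$: since $\bv_0\sim\mathcal N_p(0,\gamma\bfI_p)$ matches the velocity marginal of $p_*$, the only mismatch at time $0$ is in position, and its $\bfC$-weighted second moment is controlled by $\wass_2^2(\nu_0,\pi)$ plus a correction governed by $\E[f(\bvartheta_0)-f(\btheta_*)]$ (roughly $\|\bvartheta_0-\btheta_*\|^2 \lesssim \E[f(\bvartheta_0)-f(\btheta_*)]/m$ by strong convexity); I would split $\wass_2(\nu_0,\pi)$ into these two pieces and carry them separately through the geometric sum, which is why both appear multiplied by $\varrho^n$. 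The main obstacle, as in all randomized-midpoint analyses, is the careful bookkeeping in Step 2: one must verify that the auxiliary Gaussians are \emph{exactly} the right stochastic integrals so the one-step update is unbiased, bound the several $h$-dependent coefficient functions $\psi(\gamma h)$, $\psi(\gamma h(1-U))$ etc. uniformly under $\gamma h\leqslant 0.1\kappa^{-1/6}$, and — most delicately — show that the non-contractive cross term between the frozen-gradient error and the diffusion flow is genuinely $o(mh)\cdot\mathrm{Err}_k^2$ so that it can be absorbed, rather than merely $O(1)\cdot\mathrm{Err}_k^2$; this is precisely the place where the $\sqrt\kappa\,(Mh)^{3/2}$-type smallness condition (here $\gamma h\leqslant 0.1\kappa^{-1/6}$) is needed, and getting the constants down to $1.6$, $0.2$, $10$ requires tracking them through every inequality rather than using $\lesssim$.
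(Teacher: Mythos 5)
Your high-level setup is the right one and matches the paper: synchronous coupling with the exact kinetic Langevin diffusion, work in the $\bfC$-weighted $\Ltwo$ norm so that Proposition~\ref{prop:klmc_contr} gives one-step contraction at rate $e^{-mh}$, and exploit that the randomized-midpoint noise is conditionally centered in $U$ so the error splits orthogonally into a ``mean'' piece (bias) and a ``fluctuation'' piece (variance). That decomposition is exactly equations \eqref{eq:yn}--\eqref{eq:zn} in the appendix.

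Where your proposal has a real gap is in what happens after you iterate the one-step inequality. The variance of your $\bzeta_k^{\sf R}$ is not $O(h^3 p)$ alone: the one-step discretization error (both $y_{k+1}$ and $z_{k+1}$ in the paper's notation) depends on $\|\bv_k\|_{\Ltwo}$ and $\|\nabla f(\bvartheta_k)\|_{\Ltwo}$, i.e., on the state of the chain at step $k$, not just on $p$ and $h$ (see Lemma~\ref{lem:2} and Corollary~\ref{cor:rklmc}). After unrolling, you must therefore bound discounted sums of the form $\sum_k \varrho^{n-k}\|\bv_k\|_{\Ltwo}^2$ and $\sum_k\varrho^{n-k}\|\nabla f(\bvartheta_k)\|_{\Ltwo}^2$. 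Doing this by naive telescoping (as in prior work) produces exactly the spurious $nmh$ factor the theorem is designed to remove. The paper's key move — which your proposal does not address — is Proposition~\ref{prop:4}: a Lyapunov argument coupled with summation by parts (Lemma~\ref{lem:rho-sum}), using the three coupled recursions of Lemma~\ref{lem:8} for $\|\bv_n\|^2$, $\mathbb E[\bv_n^\top\bg_n]$ and $\mathbb E[f_n]$, plus the Polyak--\L{}ojasiewicz inequality to close the system. The output of Proposition~\ref{prop:4} is a bound on those discounted sums that is \emph{self-referential in $x_n$} (it contains $(x_n + 1.5\sqrt{\gamma p})^2$ via the $|z_{n+1}|$ term), so the final step is to substitute back and subtract a small multiple of $x_n$ from both sides; the condition $\gamma h\leqslant 0.1\kappa^{-1/6}$ is precisely what makes the coefficient $0.072\eta^3\sqrt\kappa + 4\eta^{3/2}$ of $x_n$ on the right small enough to absorb. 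Your proposal does not flag either the state-dependence of the one-step error or the self-referential structure, which are the two places a blind attempt would stall.

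A second, smaller issue: your account of the term $0.1\sqrt{\varrho^n\mathbb E[f(\bvartheta_0)-f(\btheta_*)]/m}$ is wrong. It does not arise from ``splitting $\wass_2(\nu_0,\pi)$'' into a strong-convexity-controlled position part. It falls out of the Lyapunov/discounted-sum machinery: the telescoping of $\mathbb E[f_k]$ in Proposition~\ref{prop:4} leaves a boundary term $\varrho^{n}\gamma\mathbb E[f_0]/\eta$, which then enters $x_n$ under a square root after the discounted sums feed back into \eqref{eq:xn_rkl}. Related to this, the factor $1.6$ in front of $\varrho^n\wass_2(\nu_0,\pi)$ is not just the condition number of $\bfC$; it also absorbs the $1.08$ coming from the self-referential subtraction and the $\sqrt2$ from converting between $x_n$ and $\gamma\wass_2(\nu_n,\pi)$.

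In short: the coupling skeleton is correct, but the proof's actual substance — the summation-by-parts Lyapunov argument that bounds the discounted sums of $\|\bv_k\|^2$ and $\|\bg_k\|^2$ without the $nmh$ loss and yields both the $\mathbb E[f_0]$ term and the self-referential inequality to be solved — is absent from your plan.
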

This result has several strengths and limitations, 
which are discussed below, after the corollary providing
the number of required iterations to 
attain a predetermined level of accuracy.

\begin{corollary}
    Let $\varepsilon\in(0,1)$ be a small constant. 
    If $\gamma = 5M$, $\bvartheta_0 = \btheta_*$ and 
    we choose $h>0$ and $n\in\mathbb N$ so that
    \begin{align}
        \gamma h =  \frac{\varepsilon^{2/3}}{
        5 + 0.6(\varepsilon^2\kappa)^{1/6} },\quad
    \text{and} 
    \quad
        n\geqslant \kappa{\varepsilon^{-2/3}}
        \big(25 + 3(\varepsilon^2\kappa)^{1/6}\big)
        \log(20/\varepsilon) \,,   
    \end{align}
    then we have $\wass_2(\nu_n^{\sf RKLMC},\pi)\leqslant 
    \varepsilon\sqrt{p/m}$.  
\end{corollary}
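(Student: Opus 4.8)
The plan is to derive the corollary as a direct algebraic consequence of \Cref{thm:rklmc}, by picking $h$ and $n$ so that each of the three surviving error terms (the initialization term vanishes once $\bvartheta_0 = \btheta_*$) is at most $\tfrac13\varepsilon\sqrt{p/m}$, or rather so that their sum is at most $\varepsilon\sqrt{p/m}$. First I would note that with $\bvartheta_0 = \btheta_*$ the term $0.1\sqrt{\varrho^n\,\E[f(\bvartheta_0) - f(\btheta_*)]/m}$ is identically $0$, since $f(\btheta_*) - f(\btheta_*) = 0$; so only the genuinely stochastic-discretization terms
\begin{align}
    1.6\varrho^n \wass_2(\nu_0,\pi), \qquad 0.2(\gamma h)^3\sqrt{\kappa p/m}, \qquad 10(\gamma h)^{3/2}\sqrt{p/m}
\end{align}
remain, and I must choose the parameters to control these three.

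The dominant discretization term is the last one, of order $(\gamma h)^{3/2}\sqrt{p/m}$, so the step size is chosen to make $10(\gamma h)^{3/2} \lesssim \varepsilon$, i.e. $\gamma h \asymp \varepsilon^{2/3}$. Concretely I would take $\gamma h = \varepsilon^{2/3}/\big(5 + 0.6(\varepsilon^2\kappa)^{1/6}\big)$ as stated, then verify (i) that the admissibility hypothesis $\gamma h \leqslant 0.1\,\kappa^{-1/6}$ of \Cref{thm:rklmc} holds — this is exactly why the denominator carries the $0.6(\varepsilon^2\kappa)^{1/6}$ correction: writing $\gamma h \leqslant \varepsilon^{2/3}/(0.6(\varepsilon^2\kappa)^{1/6}) = \varepsilon^{2/3}/(0.6\,\varepsilon^{1/3}\kappa^{1/6}) = \varepsilon^{1/3}/(0.6\,\kappa^{1/6}) \leqslant 1/(0.6\,\kappa^{1/6})$, and one checks the constant $1/0.6 < \dots$; more carefully one bounds $\varepsilon^{1/3}\leqslant 1$ and needs $1/0.6 \leqslant$ the right constant, so some care with the numeric constant is warranted — and (ii) that $10(\gamma h)^{3/2} \leqslant 10\,\varepsilon/(5 + \dots)^{3/2} \leqslant 10\varepsilon/5^{3/2} < \varepsilon$, and similarly $0.2(\gamma h)^3\sqrt{\kappa} = 0.2(\gamma h)^{3/2}\cdot (\gamma h)^{3/2}\sqrt{\kappa} \leqslant 0.2(\gamma h)^{3/2}\cdot \varepsilon\cdot(\varepsilon^{2/3}\kappa^{1/3})^{\dots}$, which I would reorganize using $\gamma h \leqslant 0.1\kappa^{-1/6}$ so that $(\gamma h)^3 \sqrt{\kappa} \leqslant (\gamma h)^{3/2}(\gamma h)^{3/2}\kappa^{1/2} \leqslant (\gamma h)^{3/2}(0.1)^{3/2}\kappa^{-1/4}\kappa^{1/2}$ — in fact it is cleaner to bound $(\gamma h)^3\kappa^{1/2}$ in terms of $(\gamma h)^{3/2}\varepsilon$ directly and absorb it into the same $\varepsilon\sqrt{p/m}$ budget, so that the two discretization terms together contribute, say, at most $0.96\varepsilon\sqrt{p/m}$.

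Next I would handle the geometric term $1.6\varrho^n\wass_2(\nu_0,\pi) = 1.6 e^{-mnh}\wass_2(\nu_0,\pi)$, requiring $n$ large enough that this is at most (the remaining budget, e.g.) $0.04\varepsilon\sqrt{p/m}$, equivalently $mnh \geqslant \log\big(40\wass_2(\nu_0,\pi)\sqrt{m/p}/\varepsilon\big)$. Since $\bvartheta_0 = \btheta_*$, one can invoke the standard bound $\wass_2^2(\nu_0,\pi) \leqslant p/m$ (the second moment of $\pi$ centered at its mode is at most $p/m$ under $m$-strong convexity), which makes $\wass_2(\nu_0,\pi)\sqrt{m/p}\leqslant 1$ and reduces the requirement to $mnh \geqslant \log(40/\varepsilon) \leqslant$ something of the form $c\log(20/\varepsilon)$; then dividing through by $mh = (\gamma h)(m/\gamma) = (\gamma h)/(5\kappa)$ gives $n \geqslant \frac{5\kappa}{\gamma h}\log(\cdots) = 5\kappa\big(5 + 0.6(\varepsilon^2\kappa)^{1/6}\big)\varepsilon^{-2/3}\log(\cdots) = \kappa\varepsilon^{-2/3}\big(25 + 3(\varepsilon^2\kappa)^{1/6}\big)\log(\cdots)$, which matches the claimed formula; the only loose end is confirming that $\log(40/\varepsilon)$ is absorbed by $\log(20/\varepsilon)$ up to the constants already present, which holds for $\varepsilon$ small (and one can always enlarge constants slightly). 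The main obstacle is purely bookkeeping: keeping the numeric constants $1.6, 0.2, 10, 0.1, 5, 0.6, 25, 3$ consistent so that the three pieces provably sum to at most $\varepsilon\sqrt{p/m}$ and so that $\gamma h \leqslant 0.1\kappa^{-1/6}$ is genuinely implied — there is no conceptual difficulty, only the risk of an off-by-a-constant slip, so I would budget the $\varepsilon$ generously (e.g. $0.5 + 0.46 + 0.04$) to leave slack.
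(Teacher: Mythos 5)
Your route is the natural one and is what the paper implicitly intends: with $\bvartheta_0=\btheta_*$ the $\mathbb E[f(\bvartheta_0)-f(\btheta_*)]$ term in \Cref{thm:rklmc} vanishes identically, $\wass_2(\nu_0,\pi)=\wass_2(\delta_{\btheta_*},\pi)\leqslant\sqrt{p/m}$ by strong log-concavity, and with $\gamma = 5M$ so that $m\gamma h = (\gamma h)/(5\kappa)$ the two numerical regimes of the step size, $\gamma h\leqslant\varepsilon^{2/3}/5$ and $\gamma h\leqslant\varepsilon^{2/3}/\bigl(0.6(\varepsilon^2\kappa)^{1/6}\bigr)$, make $10(\gamma h)^{3/2}\leqslant 10\varepsilon/5^{3/2}\approx 0.89\varepsilon$ and $0.2(\gamma h)^3\sqrt{\kappa}\leqslant 0.2\varepsilon/0.6^3\approx 0.93\varepsilon$ respectively (and plotting the sum across the interpolation shows it stays below $\varepsilon$); the $n$ requirement is then exactly $\log(20/\varepsilon)$ divided by $mh$, which gives the stated expression. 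So there is no conceptual disagreement with the paper.

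The one loose end you flag is, in fact, a real one, and worth being explicit about. The crude estimate you wrote, $\gamma h\leqslant\varepsilon^{1/3}/(0.6\kappa^{1/6})$, compared to the hypothesis $\gamma h\leqslant 0.1\kappa^{-1/6}$ of \Cref{thm:rklmc}, requires $\varepsilon^{1/3}/0.6\leqslant 0.1$, i.e.\ $\varepsilon\leqslant 0.06^3\approx 2.2\times 10^{-4}$; without some such smallness condition on $\varepsilon$ the admissibility can genuinely fail. More precisely, setting $y=\varepsilon^{1/3}\kappa^{1/6}$ the requirement $\gamma h\leqslant 0.1\kappa^{-1/6}$ becomes $(\varepsilon^{1/3}-0.06)\,y\leqslant 0.5$, which holds for all $\kappa$ only when $\varepsilon^{1/3}\leqslant 0.06$; e.g.\ at $\varepsilon=0.1$ it fails already for $\kappa\gtrsim 360$. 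The corollary's phrase ``let $\varepsilon$ be a small constant'' must therefore be read as genuinely restricting $\varepsilon$, or one should additionally cap the step size at $0.1\kappa^{-1/6}$; your instinct to reorganize $0.2(\gamma h)^3\sqrt{\kappa}$ directly via the $0.6(\varepsilon^2\kappa)^{1/6}$ factor rather than via the hypothesis is the right way to handle the error-budget part, but the hypothesis of \Cref{thm:rklmc} still needs to be checked separately and cannot be dispensed with. Apart from this, your bookkeeping plan is sound.
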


The corollary presented above gives the best-known convergence rate for the number of gradient evaluations required to achieve a prescribed error level in the case of a gradient Lipschitz potential, without any additional assumptions on its structure or smoothness. This rate, $\kappa\varepsilon^{-2/3}(1+ (\varepsilon^2\kappa)^{1/6})$, was first discovered by \cite{shen2019randomized} (see also \citep{he2020ergodicity}). One of the main strengths of our result
in \Cref{thm:rklmc} is that it removes a factor $nmh$ 
from the discretization error, which was present in 
the previous upper bounds of the sampling error. 
Furthermore, our bound contains only small and explicit constants. Finally, our result does not require the RKLMC algorithm to be initialized at the minimizer of the potential, which is important for extending the method to non-convex potentials.


On the downside, the condition $\gamma\geqslant 5M$ 
is stronger than the corresponding conditions used in 
prior work for kinetic Langevin Monte Carlo (without
randomization). Indeed, these prior results generally
require $\gamma\geqslant 2M$. Having a proof of 
\Cref{thm:rklmc} that reduces the factor 5 in 
$\gamma\geqslant 5M$ would lead to significant 
savings in running time. A second limitation of our
result is its dependence on the initial error. If 
the algorithm is not initialized at the minimum of $f$,
our bound would imply that the number of iterations to 
achieve an accuracy $\varepsilon \sqrt{p/m}$ scales 
polynomially in the initial error $\wass_2(\nu_0,\pi)$.

While the proof of this theorem is deferred to the supplementary material, we can outline the main argument that allowed us to remove the factor $nmh$ from the error bound. To convey the main idea, let us consider three positive sequences ${a_n}$, ${b_n}$, ${c_n}$ satisfying, for every $n\in\mathbb N$,
\begin{align}
a_{n+1} &\leqslant (1 - \alpha) a_n + b_n \label{an}\\
c_{n+1} &\leqslant c_n - b_n + {\sf C}, \label{cn}
\end{align}
with some $\alpha\in(0,1)$ and ${\sf C}>0$. Using the standard telescoping sums argument, frequently employed for proving the convergence of convex optimization algorithms, one can infer from \eqref{cn} that
\begin{align}\label{bn}
\sum_{k=0}^n\nolimits b_n \leqslant c_0-c_{n+1} + n{\sf C}\leqslant c_0+ n{\sf C}.
\end{align}
On the other hand, it follows from \eqref{an} that
\begin{align}\label{an1}
a_{n+1} &\leqslant (1-\alpha)^{n+1} a_0 + \sum_{k=0
}^n\nolimits (1-\alpha)^{n-k}b_k.
\end{align}
Upper bounding $(1-\alpha)^{n-k}$ by one, and using 
\eqref{bn}, we arrive at
\begin{align}\label{an2}
a_{n+1}\leqslant (1-\alpha)^{n+1} a_0 + c_0 + n{\sf C}.
\end{align}
This type of argument, used in previous papers on 
RKLMC, is sub-optimal and leads to the extra factor 
$nmh$. A tighter bound can be obtained by replacing 
the telescoping sum argument by the summation by parts. 
More precisely, one can check that \eqref{cn} and 
\eqref{an1} yield
\begin{align}
    a_{n+1} &\leqslant (1-\alpha)^{n+1} a_0 + \sum_{
    k=0}^n\nolimits (1-\alpha)^{n-k} (c_k - c_{k+1}) 
    + {\sf C}\sum_{k=0}^n\nolimits (1-\alpha)^{n-k}\\
    &\leqslant (1-\alpha)^{n+1} a_0 + (1-\alpha)^nc_0 
    + \alpha\sum_{k=0}^n\nolimits (1-\alpha)^{n-k} c_k 
    + \frac{\sf C}{\alpha}. \label{an3}
\end{align}
The upper bound provided by \eqref{an3} has two 
advantages as compared to \eqref{an2}: the term 
$n\sf C$ is replaced by ${\sf C}/\alpha$, which 
is generally smaller, and the dependence on the 
initial value is $(1-\alpha)^nc_0$ instead of 
$c_0$. This comes also with a challenge consisting 
in upper bounding the sum present in the right-hand 
side of \eqref{an3}, which we managed to overcome 
using the strong convexity (or, more precisely, 
the Polyak-Lojasiewicz condition). The full details 
are deferred to the supplementary material.

\section{Improved error bound for the kinetic 
Langevin with Euler discretization}

The proof techniques presented in the previous 
section can be used to derive an upper bound on the
error of the kinetic Langevin Monte Carlo (KLMC) 
algorithm. KLMC is a discretized version of KLD 
\eqref{KLD:2}, where the term $\nabla f(\bL_t)$ 
is replaced by $\nabla f(\bL_{kh})$ on each interval 
$[kh, (k+1)h)$. The resulting error bound, given 
in the following theorem, exhibits a better 
dependence on $\kappa$ than previously established 
bounds.

\begin{theorem}
\label{thm:klmc1}
Let $f:\mathbb R^p\to\mathbb R$ satisfy $m\bfI_p
\preccurlyeq \nabla^2 f(\btheta)\preccurlyeq M
\bfI_p$ for every $\btheta\in\mathbb R^p$. Choose
$\gamma$ and $h$ so that $\gamma \geqslant 5M$ 
and  $\sqrt{\kappa}\,\gamma h \leqslant 0.1$, 
where $\kappa = M/m$. Assume that $\bvartheta_0$ 
is independent of $\bv_0$ and that $\bv_0\sim
\mathcal N_p(0, \gamma\bfI_p)$. Then, for any
$n \geqslant 1$, the distribution $\nu_n^{\textup{ 
\sf KLMC}}$ of $\bvartheta_n^{\textup{\sf KLMC}}$
satisfies
\begin{align}
    \wass_2(\nu_n^{\textup{\sf KLMC}},\pi) &\leqslant 
    2\varrho^n  \wass_2(\nu_0,\pi)
    + 0.05\sqrt{\varrho^n
    \mathbb E[f(\bvartheta_0) - f(\btheta_*)]/m}
    + 0.9 \gamma h \sqrt{\kappa p/m} \,,
\end{align}
where $\varrho = \exp(-mh),$ and $
\btheta_*=\arg\min_{\btheta\in\RR^p} f(\btheta).$ 
\end{theorem}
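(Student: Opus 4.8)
The plan is to follow the same blueprint used for \Cref{thm:rlmc} and \Cref{thm:rklmc}, specialized to the Euler--Maruyama (rather than randomized-midpoint) discretization of \eqref{KLD:2}. First I set up a synchronous coupling: let $(\bL^{\sf KLD},\bV^{\sf KLD})$ solve \eqref{KLD:2} driven by the same Brownian motion as KLMC, with $(\bL_0^{\sf KLD},\bV_0^{\sf KLD})\sim p_*$ coupled to $(\bvartheta_0,\bv_0)$ by taking $\bV_0^{\sf KLD}=\bv_0$ (legitimate, since $\mathcal N_p(0,\gamma\bfI_p)$ is exactly the velocity marginal of $p_*$) and an optimal coupling of $\bL_0^{\sf KLD}$ with $\bvartheta_0$ realizing $\wass_2(\nu_0,\pi)$. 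Because the position marginal of $p_*$ is $\pi$, it then suffices to bound $\|\bvartheta_n-\bL_{nh}^{\sf KLD}\|_{\Ltwo}$, which I carry out through the transformed pair $\bfC[\bV;\bL]$ from \Cref{prop:klmc_contr}; the matrix $\bfC$ has condition number controlled by $\gamma$, so moving between the two norms costs only explicit constants.

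Next I derive the one-step recursion. Using variation of constants for \eqref{KLD:2} on $[kh,(k+1)h]$ and matching it with the KLMC update, the transformed error $\bfC\big([\bV_{(k+1)h}^{\sf KLD};\bL_{(k+1)h}^{\sf KLD}]-[\bv_{k+1};\bvartheta_{k+1}]\big)$ is a contractive linear image of the step-$k$ error (here \Cref{prop:klmc_contr} together with $\gamma\geqslant 5M$ supplies a contraction factor $\varrho=e^{-mh}$ with room to spare), plus a discretization term assembled from $\int_0^h\mathrm{kernel}(s)\,\big(\nabla f(\bL_{kh+s}^{\sf KLD})-\nabla f(\bvartheta_k)\big)\,\rmd s$ with kernels $e^{-\gamma(h-s)}$, $\psi(\gamma h)$, etc. Splitting $\nabla f(\bL_{kh+s}^{\sf KLD})-\nabla f(\bvartheta_k)=\big(\nabla f(\bL_{kh+s}^{\sf KLD})-\nabla f(\bL_{kh}^{\sf KLD})\big)+\big(\nabla f(\bL_{kh}^{\sf KLD})-\nabla f(\bvartheta_k)\big)$, the second piece is $M$-Lipschitz in the step-$k$ position error and folds into the contraction, while the first is the genuine discretization error, bounded via $\bL_{kh+s}^{\sf KLD}-\bL_{kh}^{\sf KLD}=\int_0^s\bV_{kh+u}^{\sf KLD}\,\rmd u$ and the stationary identity $\E\|\bV_t^{\sf KLD}\|^2=\gamma p$, giving a per-step $\Ltwo$-size of order $\gamma h^2\sqrt{Mp}$.

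The feature distinguishing the Euler case from the randomized-midpoint one is that this discretization error is \emph{not} mean-zero given the current state, so it cannot be dropped as in the RKLMC/RLMC analyses; its correlation with the running error is handled exactly through the $a_n/b_n/c_n$ device of \eqref{an}--\eqref{an3}. The cross term is absorbed at the cost of a nonnegative sequence $b_k$, and a companion Lyapunov sequence $c_k$ of the order of $\E[f(\bvartheta_k)-f(\btheta_*)]/m$ is shown, using a descent-type estimate for the KLMC update together with strong convexity / the Polyak--Lojasiewicz inequality, to satisfy $c_{k+1}\leqslant c_k-b_k+{\sf C}$ with ${\sf C}$ of the order of the squared per-step discretization error. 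The summation-by-parts bound \eqref{an3}, applied with $\alpha$ of order $mh$, then replaces the naive $n{\sf C}$ by ${\sf C}/\alpha$---which, after taking square roots, is what produces the $0.9\,\gamma h\sqrt{\kappa p/m}$ bias---and turns the initial-condition dependence into $(1-\alpha)^nc_0\asymp\varrho^n\,\E[f(\bvartheta_0)-f(\btheta_*)]/m$, matching the middle term of the statement.

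The step I expect to be the main obstacle is controlling the residual geometric sum $\alpha\sum_{k=0}^n(1-\alpha)^{n-k}c_k$ left over in \eqref{an3}: this needs a uniform-in-$k$ bound on $\E[f(\bvartheta_k)-f(\btheta_*)]$---again extracted from the contraction plus the moment estimates above, so that $c_k$ is dominated by $\varrho^kc_0$ plus a stationary-level term of order $(\gamma h)\kappa p/m$---after which the weights make the sum collapse to the order of the other two terms. It remains to reassemble the pieces, pass back from the $\bfC$-coordinates, and keep track of the numerical constants under $\gamma\geqslant 5M$ and $\sqrt{\kappa}\,\gamma h\leqslant 0.1$; the factor $5$ in $\gamma\geqslant 5M$ is precisely what leaves enough slack in the contraction to absorb all the discretization and cross terms while keeping the leading constants as small as $2$, $0.05$ and $0.9$.
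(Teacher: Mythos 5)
Your high‐level picture is the right one: a synchronous coupling with the stationary KLD, contraction via Proposition~\ref{prop:klmc_contr} in the $\bfC$-transformed coordinates, a nonzero-mean one-step discretization error (unlike RLMC/RKLMC), and the summation-by-parts device of \eqref{an}--\eqref{an3} in place of a naive telescope. But the middle of your argument is internally inconsistent, and two steps that you describe as routine are exactly where the proof has to deviate from your plan.

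First, the decomposition. You split $\nabla f(\bL_{kh+s})-\nabla f(\bvartheta_k)$ as $(\nabla f(\bL_{kh+s})-\nabla f(\bL_{kh}))+(\nabla f(\bL_{kh})-\nabla f(\bvartheta_k))$ and say the second piece ``folds into the contraction.'' But Proposition~\ref{prop:klmc_contr} is a statement about two \emph{continuous-time} kinetic Langevin diffusions driven by the same Brownian motion; it does not give you that the frozen-gradient linear map $e^{\bfA h}-\int_0^h e^{\bfA(h-s)}\,\mathrm ds\,\gamma\bfH_k$ (in $\bfC$-coordinates) contracts with rate $e^{-mh}$ — that would be a separate one-step discrete contraction lemma, and you haven't stated or justified one. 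The paper avoids this by introducing the auxiliary diffusion $(\bL',\bV')$ started from the \emph{discrete} iterate $(\bvartheta_n,\bv_n)$ and driven by the same increment, so that Proposition~\ref{prop:klmc_contr} applies verbatim to the two genuine KLD solutions $(\bL,\bV)$ and $(\bL',\bV')$, and the ``discretization term'' is the explicit difference $\bfC[\bv_{n+1}-\bV'_h;\bvartheta_{n+1}-\bL'_h]$.

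Second, the consequence of that auxiliary-process choice is that the one-step error (Lemma~\ref{lem:klmc}) is expressed in terms of $\|\bv_n\|_{\Ltwo}$ and $\|\nabla f(\bvartheta_n)\|_{\Ltwo}$ — the \emph{discrete, non-stationary} iterates — not via the stationary identity $\mathbb E\|\bV_t^{\sf KLD}\|^2=\gamma p$. This is not a cosmetic difference: the theorem's middle term $\sqrt{\varrho^{n}\,\mathbb E[f(\bvartheta_0)-f(\btheta_*)]/m}$ records exactly the transient of the discrete velocities and gradients toward stationarity. If your per-step error were really bounded by a constant of order $M\gamma h^2\sqrt{\gamma p}$ from the stationary velocity, there would be no $k$-dependence at all, no need for a companion sequence $c_k$, and no $\varrho^n\mathbb E[f_0]$ term in the answer. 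As proposed you simultaneously invoke the stationary‐velocity bound and a Lyapunov sequence $c_k\asymp\mathbb E[f(\bvartheta_k)-f(\btheta_*)]/m$; those two choices belong to two different decompositions and cannot both hold in the same proof.

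Finally, the Lyapunov machinery is heavier than you anticipate. In the paper the quantities being summed discountedly are $\|\bv_k\|_{\Ltwo}^2$ and $\|\bg_k\|_{\Ltwo}^2$, and controlling them (Proposition~\ref{lem:sum2}) requires tracking a coupled system that also involves $z_k=\mathbb E[\bv_k^\top\bg_k]$ and $\mathbb E[f_k]$, with repeated use of Lemma~\ref{lem:rho-sum} and the Polyak--{\L}ojasiewicz inequality. Moreover, the running error $x_n$ reappears on the right through Lemma~\ref{lem:zn} ($|z_{n+1}|$ is bounded by $(1.19\,x_n+1.14\sqrt{\gamma p})^2$), so the final inequality is implicit in $x_n$ and must be solved; the slack from $\sqrt{\kappa}\,\gamma h\leqslant 0.1$ is used precisely to keep the coefficient of $x_n$ below one. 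None of these steps is visible in your sketch, and the ``uniform-in-$k$ bound on $\mathbb E[f(\bvartheta_k)-f(\btheta_*)]$'' you propose as the main obstacle is neither sufficient nor what the paper actually establishes.
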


Bounds on the error of KLMC under convexity 
assumption, or other related conditions, can be found 
in recent papers 
\citep{cheng2018underdamped,dalalyan_riou_2018,
monmarche2020highdimensional,MONMARCHE2023316}. 
Our result has the advantage of providing  
an upper bound with the best known dependence 
on the condition number $\kappa$ and having 
relatively small numerical constants, as shown
in the next corollary. 

\begin{corollary}
    Let $\varepsilon\in(0,0.1)$. 
    If $\gamma = 5M$, $\bvartheta_0 = \btheta_*$ and 
    we choose $h>0$ and $n\in \mathbb{N}$ so that
    \begin{align}
        \gamma h =  \varepsilon 
        {\kappa}^{-1/2},\quad
    \text{and} 
    \quad
        n\geqslant {5\kappa^{3/2}}\varepsilon^{-1}
        \log(20/\varepsilon)     
    \end{align}
    then we have $\wass_2(\nu_n^{\sf KLMC},\pi)\leqslant 
    \varepsilon\sqrt{p/m}$.  
\end{corollary}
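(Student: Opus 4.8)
The plan is to obtain the corollary as a direct specialization of \Cref{thm:klmc1} to the stated choice of parameters. First I would verify that the hypotheses of \Cref{thm:klmc1} are met: with $\gamma = 5M$ the requirement $\gamma\geqslant 5M$ holds, and with $\gamma h = \varepsilon\kappa^{-1/2}$ we get $\sqrt{\kappa}\,\gamma h = \varepsilon$, which is $\leqslant 0.1$ precisely because $\varepsilon\in(0,0.1)$. Hence \Cref{thm:klmc1} applies and yields
\begin{align*}
\wass_2(\nu_n^{\sf KLMC},\pi)\leqslant 2\varrho^n\wass_2(\nu_0,\pi) + 0.05\sqrt{\varrho^n\,\E[f(\bvartheta_0)-f(\btheta_*)]/m} + 0.9\,\gamma h\sqrt{\kappa p/m},
\end{align*}
with $\varrho = e^{-mh}$.

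Next I would dispose of the first two terms using the initialization $\bvartheta_0 = \btheta_*$. Since $\bvartheta_0$ is deterministic and equal to the minimizer, $\E[f(\bvartheta_0)-f(\btheta_*)] = 0$, so the middle term vanishes. For the first term, $\nu_0 = \delta_{\btheta_*}$, so $\wass_2^2(\nu_0,\pi) = \int_{\RR^p}\|\btheta-\btheta_*\|^2\,\pi(\rmd\btheta)$; integrating by parts against $\pi\propto e^{-f}$ with the vector field $\btheta\mapsto\btheta-\btheta_*$ gives $p = \int (\btheta-\btheta_*)\cdot\nabla f(\btheta)\,\pi(\rmd\btheta)\geqslant m\int\|\btheta-\btheta_*\|^2\,\pi(\rmd\btheta)$, where I used $\nabla f(\btheta_*) = 0$ together with $m$-strong convexity. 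Hence $\wass_2(\nu_0,\pi)\leqslant\sqrt{p/m}$.

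It remains to control $\varrho^n$ and the discretization term by arithmetic. From $\gamma = 5M$ and $\gamma h = \varepsilon\kappa^{-1/2}$ one computes $mh = (m/\gamma)\,\gamma h = \varepsilon/(5\kappa^{3/2})$, so the choice $n\geqslant 5\kappa^{3/2}\varepsilon^{-1}\log(20/\varepsilon)$ gives $nmh\geqslant\log(20/\varepsilon)$ and therefore $\varrho^n = e^{-nmh}\leqslant\varepsilon/20$. Substituting, the first term is at most $2(\varepsilon/20)\sqrt{p/m} = 0.1\,\varepsilon\sqrt{p/m}$, and the last term equals $0.9\,\varepsilon\kappa^{-1/2}\cdot\sqrt{\kappa}\sqrt{p/m} = 0.9\,\varepsilon\sqrt{p/m}$. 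Adding the three contributions gives $\wass_2(\nu_n^{\sf KLMC},\pi)\leqslant(0.1+0.9)\,\varepsilon\sqrt{p/m} = \varepsilon\sqrt{p/m}$, which is the claim. The only step beyond plugging numbers in is the moment bound $\wass_2(\delta_{\btheta_*},\pi)\leqslant\sqrt{p/m}$ for strongly log-concave $\pi$, which is the short integration-by-parts argument above; I do not expect any genuine obstacle.
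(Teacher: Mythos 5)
Your proposal is correct and follows essentially the same route the paper intends: the corollary is a direct specialization of \Cref{thm:klmc1}, with the middle term killed by $\bvartheta_0=\btheta_*$, the bound $\wass_2(\delta_{\btheta_*},\pi)\leqslant\sqrt{p/m}$ (which the paper simply cites from \citet{durmus2019} but which you prove by the standard integration-by-parts/strong-convexity argument), and the arithmetic $mh=\varepsilon/(5\kappa^{3/2})$, $\varrho^n\leqslant\varepsilon/20$, giving $0.1\varepsilon+0.9\varepsilon=\varepsilon$ in units of $\sqrt{p/m}$. No gaps.
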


It is worth noting that our error bounds, 
along with the other bounds mentioned 
previously under strong convexity, rely 
on the synchronous coupling between the 
KLMC and the KLD. However, in the case of 
the vanilla Langevin, it has been shown 
in \cite{durmus2019analysis} that the 
dependence of the error bound on $\kappa$ 
can be improved by considering other 
couplings (in their case, the coupling 
is hidden in the analytical arguments). 
We conjecture that the dependence on 
$\kappa$ in the kinetic Langevin Monte 
Carlo algorithm can also be improved 
through non-synchronous coupling. 
Specifically, we conjecture that the 
number of iterations required to achieve 
a $\wass_2$-error bounded by $\varepsilon 
\sqrt{p/m}$ should scale as $\kappa/
\varepsilon$ rather than $\kappa^{3/2}/
\varepsilon$, as obtained in previous 
work and in \Cref{thm:klmc1}.

\section{Discussion of assumptions 
and outlook}

The results presented in this paper provide easily computable guarantees for performing sampling with assured accuracy. These guarantees are conservative, implying that the actual sampling error may be smaller than $\varepsilon$ even if the upper bounds stated in our theorems are larger than $\varepsilon$. However, these bounds represent the most reliable technique available in the existing literature. 
The importance of having such guarantees is further emphasized by the lack of reliable practical measures to assess the quality of sampling methods. 
To better understand the computational complexity implied by our bounds for various Monte Carlo algorithms, we present in \Cref{tab:1} the number of gradient evaluations required to achieve the accuracy of $\varepsilon\sqrt{p/m}$ for different combinations of $(\varepsilon,\kappa)$.

\paragraph{Strong convexity}
The assumption of strong convexity is often 
seen as too restrictive. However, there are 
ways to relax this assumption. In our theorems, 
strong convexity is used for three purposes: 
(a) to ensure exponential convergence of the 
continuous-time dynamics, (b) to relate the 
potential's values to its gradient through 
the Polyak-Lojasiewicz condition 
$\|\nabla f(\btheta)\|^2\geqslant 2m (f 
(\btheta) - f(\btheta_*)) $ \citep{Polyak63,
Lojasiewicz}, and (c) to provide the following 
simple upper bound on the 2-Wasserstein distance 
$\wass_2( \delta_{\btheta_*},\pi) \leqslant 
\sqrt{p/m}$ \citep[Prop.~1]{durmus2019}. 
Instead of strong convexity, we can assume 
that the potential satisfies the $m$-PL 
condition and that the target distribution 
satisfies the Poincare inequality with a 
constant no larger than $1/m$. This leads 
to only minor changes in the proof techniques 
and results.

\begin{table}[t]
    \caption{The number of iterations that are 
    sufficient for the algorithms $\{$L,RL,KL,RKL$\}$MC to achieve an error in $\wass_2$
    distance bounded by $\varepsilon \sqrt{p/m}$, 
    provided that they are initialized at the minimum 
    of the potential $f$. }
    \centering\small
    \begin{tabular}{c||rrrrrr}
        \toprule
         $(\varepsilon,\kappa)$ 
         & $(0.1^1,10^1)$ & $(0.1^1,10^3)$ 
         & $(0.1^1,10^5)$ & $(0.1^1,10^7)$ 
         & $(0.1^1,10^9)$ & $(0.1^1,10^{11})$\\
         \midrule
         LMC & $1.2\times 10^4$ &  $1.2\times 
         10^6$ & $1.2\times 10^8$  & $1.2
         \times 10^{10}$ & $1.2 \times 10^{12}$ 
         & $1.2\times 10^{14}$\\
         RLMC & $3.6\times 10^3$ & $1.1\times 10^6$ 
         & $4.5\times 10^{8}$ & $2.0\times 10^{11}$ & 
         $9.3\times 10^{13}$ & $4.3\times 10^{16}$\\
         KLMC  & $8.4\times 10^3$ & $8.4\times 10^6$ 
         & $8.4\times 10^9$ & $8.4\times 10^{12}$  & 
         $8.4\times 10^{15}$ & $8.4\times 10^{18}$\\
         RKLMC & $1.0\times 10^4$ & $1.1\times 10^6$ & 
         $1.1\times 10^8$ & $1.3\times 10^{10}$  & 
         $2.2\times 10^{12}$ & $4.2\times 10^{14}$\\
         \toprule
         $(\varepsilon,\kappa)$ 
         & $(0.1^3,10^1)$ & $(0.1^3,10^3)$ 
         & $(0.1^3,10^5)$ & $(0.1^3,10^7)$ 
         & $(0.1^3,10^9)$ & $(0.1^3,10^{11})$\\
         \midrule
         LMC & $2.2\times 10^8$ & $2.2\times 10^{10}$ & 
         $2.2\times 10^{12}$ & $2.2\times 10^{14}$ & 
         $2.2\times 10^{16}$ & $2.2\times 10^{18}$ \\
         RLMC & $3.8\times 10^5$ & $6.8\times 10^{7}$ & 
         $2.0\times 10^{10}$ & $8.4\times 10^{12}$ & 
         $3.8\times 10^{15}$ & $1.7\times 10^{18}$ \\
         KLMC & $1.6\times 10^6$ & $1.6\times 
         10^{9\hphantom{0}}$ & $1.6\times 10^{12}$ & 
         $1.6\times 10^{15}$& $1.6\times 10^{18}$ & 
         $1.6\times 10^{21}$\\
         RKLMC & $4.5\times10^{5}$ & $4.5\times10^{7
         \hphantom{0}}$ & $4.5\times10^{\hphantom{0}9}$ 
         & $4.5 \times10^{11}$  & $4.7\times10^{13}$ 
         & $5.7\times10^{15}$\\
         \toprule
         $(\varepsilon,\kappa)$ 
         & $(0.1^5,10^1)$ & $(0.1^5,10^3)$ 
         & $(0.1^5,10^5)$ & $(0.1^5,10^7)$ 
         & $(0.1^5,10^9)$ & $(0.1^5,10^{11})$\\
         \midrule
         LMC & $3.2\times 10^{12}$ & $3.2\times 10^{14}$ 
         & $3.2\times 10^{16}$ & $3.2\times 10^{18}$  
         & $3.2\times 10^{20}$ & $3.2\times 10^{22}$\\
         RLMC & $4.6\times 10^{7\hphantom{0}}$ & $5.5
         \times 10^{9\hphantom{0}}$ & $9.9\times 10^{11}$ 
         & $3.0\times 10^{14}$  & $1.2\times 10^{17}$ & 
         $5.5\times 10^{19}$\\
         KLMC & $2.3\times 10^{8\hphantom{0}}$ & $2.3
         \times 10^{11}$ & $2.3\times 10^{14}$ & $2.3
         \times 10^{17}$  & $2.3\times 10^{20}$ & $2.3
         \times 10^{23}$\\
         RKLMC & $1.5\times 10^{7\hphantom{0}}$ & $1.5
         \times 10^{9\hphantom{0}}$ & $1.5\times 10^{11}$ 
         & $1.5\times 10^{13}$  & $1.5\times 10^{15}$ 
         & $1.5\times 10^{17}$\\
         \bottomrule
    \end{tabular}
    \label{tab:1}
\end{table}

Alternatively, we can assume that the function 
is only strongly convex outside a ball of 
radius $R>0$, whereas within the ball it is 
smooth but otherwise arbitrary. This approach requires an additional factor of order $e^{MR^2}$ in the number of iterations necessary to achieve a specified error level \citep{Cheng3,ma2018sampling}. We can also assume that the Markov semi-group has a spectral gap and use this gap in the risk bounds. However, this approach goes against the spirit of our paper, which aims to provide guarantees that are easy to interpret and verify.

Another important point to note is that the results obtained under the assumption of strong convexity can be used as ready-made results in other frameworks as well. For instance, this is applicable to weakly convex potentials or potentials supported on a compact set \citep{dalalyan2019bounding,dwivedi2018log,brosse17a}.

\paragraph{Smoothness} 
Smoothness of $f$ is a critical assumption for the 
results obtained in this paper. However, in statistical 
applications, this assumption may not hold, such as 
when using a Laplace prior. In such cases, various 
approaches have been proposed, mainly involving 
gradient approximation techniques, as explored in 
the literature \citep{Durmus3, chatterji2019langevin}. 
Our results open the door for similar extensions of 
the randomized midpoint method for such scenarios.

It should also be stressed that if the potential is 
more than twice differentiable with a bounded tensor of 
higher-order derivatives, then it is possible to
design Monte Carlo algorithms that perform better 
than the LMC and the KLMC \citep{dalalyan2019user,
dalalyan_riou_2018,ma2019there}. The same is true if 
the function $f$ has some specific structure \citep{Mou2021}.


\paragraph{Functional inequalities}
Functional inequalities such as the Poincar\'e and
the log-Sobolev inequalities provide a convenient
framework for analyzing sampling methods derived
from continuous-time Markov processes. This line
of research was developed in a series of papers
\citep{chewi2020exponential,Vempala_Wibisono,
Chewi_Erdogdu}. We believe our results can also
be reformulated using these inequalities; however, 
we opted for sticking to log-concave setting in
order to keep conditions easy to check. Note that
even for simple distributions such as the posterior
of the logistic model, the constant of the Poincar\'e
inequality is not known. 

\paragraph{Other distances}
The Wasserstein-2 distance, utilized in this paper, 
serves as a natural metric for measuring the error 
in sampling due to its connection with optimal 
transport. However, it is worth noting that recent 
literature on gradient-based sampling has explored 
other metrics such as total variation distance, KL 
divergence, and $\chi^2$ divergence
\citep{ma2019there,Vempala_Wibisono,
durmus2019analysis,chewi2020exponential,balasubramanian2022towards,zhang2023improved}. An interesting 
direction for future research involves establishing 
error guarantees for the randomized midpoint method 
with respect to these alternative distances.


\begin{ack}
This work was partially supported by the grant Investissements d’Avenir 
(ANR-11-IDEX0003/Labex Ecodec/ANR-11-LABX-0047) and the center \href{https://www.hi-paris.fr/}{Hi! PARIS}.
\end{ack}


\bibliography{bibliography}

\newpage

\appendix

\tableofcontents

\addtocontents{toc}{\protect\setcounter{tocdepth}{3}}

\section{The proof of the upper bound on the error of
RLMC}\label{app:proof-rlmc}


This section is devoted to the proof of the upper bound on the 
error of sampling, measured in $\wass_2$-distance, of the randomized 
mid-point method for the vanilla Langevin Langevin diffusion. 
Since no other sampling method is considered in this section, without 
any risk of confusion, we will use the notation $\bvartheta_k$ instead
of $\bvartheta^{\sf RLMC}_k$ to refer to the $k$th iterate of the
RLMC. We will also use the shorthand notation
\begin{align}
    f_k = f(\bvartheta_k),\qquad 
    \nabla f_{k}:=\grad f(\bvartheta_k),\qquad 
    \text{and}\qquad f_{k+U}:=\grad f(\bvartheta_{k+U}).
\end{align}

\subsection{Proof of \Cref{thm:rlmc}}

    Let $\bvartheta_0\sim\nu_0$ and $\bL_0\sim\pi$ be two random vectors 
    in $\mathbb R^p$ defined on the same probability space. At this stage, the
    joint distribution of these vectors is arbitrary; we will take an infimum
    over all possible joint distributions with given marginals at the end of the proof. 
    Note right away that the condition 
    $Mh + \sqrt{\kappa} (Mh)^{3/2}\leqslant 1/4$ 
    implies that $Mh + (Mh)^{3/2}\leqslant 1/4$,
    which also yields $Mh\leqslant 0.18$. 

    Assume that on the same probability space, we can define a Brownian motion
    $\bW$, independent of $(\bvartheta_0,\bL_0)$, and an infinite sequence of iid random variables, uniformly distributed in $[0,1]$, $U_0,U_1,\ldots$, independent of $(\bvartheta_0,\bL_0,\bW)$. We define the Langevin diffusion 
    \begin{align}\label{eq-int-lang}
    		\bL_t = \bL_0 - \int_{0}^{t}\nabla f(\bL_s)\rmd s + \sqrt{2}\, \bW_t.
    \end{align}
        We also set
    \begin{align} 
        \bvartheta_{k+U} &=  \bvartheta_k  - hU_k\nabla f_k + \sqrt{2} \,\big(\bW_{(k+U_k)h} - \bW_{kh}\big) \label{meth-mid1a}\\
        \bvartheta_{k+1} & = \bvartheta_k  - h\nabla f_{k+U}  
    		 + \sqrt{2}\, (\bW_{(k+1)h} - \bW_{kh}).\label{meth-mid2a}
    \end{align} 

    One can check that this sequence $\{\bvartheta_k\}$ has exactly the
    same distribution as the sequence defined in \eqref{meth-mid1} and \eqref{meth-mid2}. Therefore,
    \begin{align}
        \wass_2^2(\nu_{k+1},\pi) \leqslant \mathbb E[\|\bvartheta_{k+1}- \bL_{(k+1)h}\|_2^2] := \|\bvartheta_{k+1}- \bL_{(k+1)h} 
        \|_{\Ltwo}^2: = x_{k+1}^2. 
    \end{align}
    We will also consider the Langevin process on the time interval $[0,h]$ given by 
    \begin{equation}\label{eq-int-lang1}
		\bL'_t = \bL'_0 - \int_{0}^{t}\nabla f(\bL'_s)\,\rmd s + \sqrt{2} \,(\bW_{kh+t} - \bW_{kh}),\qquad \bL'_0 = \bvartheta_k. 
    \end{equation}
    Note that the Brownian motion is the same as in \eqref{eq-int-lang}. 

    Let us introduce one additional notation, the average of 
    $\bvartheta_{k+1}$ with respect to $U_k$,
    \begin{align}
        \bar\bvartheta_{k+1} = \mathbb E[\bvartheta_{k+1} | 
        \bvartheta_k,\bW,\bL_0]. 
    \end{align}
    Since $\bL_{(k+1)h}$ is independent of $U_k$, it is clear that 
    \begin{align}
        x_{k+1}^2 
        & = \|\bvartheta_{k+1} - \bar\bvartheta_{k+1} 
        \|_{\Ltwo}^2 + \|\bar\bvartheta_{k+1} - \bL_{(k+1)h} 
        \|_{\Ltwo}^2.
    \end{align}
    Furthermore, the triangle inequality yields
    \begin{align}
        \|\bar\bvartheta_{k+1} - \bL_{(k+1)h} \|_{\Ltwo}
        &\leqslant \|\bar\bvartheta_{k+1} - \bL'_{h} \|_{\Ltwo} +
        \|\bL'_h - \bL_{(k+1)h} \|_{\Ltwo}.
    \end{align}
    Using the standard results on the convergence of 
    Langevin diffusions, we get
    \begin{align}
        \|\bL'_h-\bL_{(k+1)h}\|_{\Ltwo} \leqslant e^{-mh}\|\bL'_0-\bL_{kh}\|_{\Ltwo} = e^{-mh}\|\bvartheta_k - \bL_{kh}
        \|_{\Ltwo} = e^{-mh}x_k.
    \end{align}
    Therefore, we get
    \begin{align}
        x_{k+1}^2 
        & \leqslant \|\bvartheta_{k+1} - \bar\bvartheta_{k+1} 
        \|_{\Ltwo}^2 + \big(\| \bar\bvartheta_{k+1} -\bL'_h
        \|_{\Ltwo} + e^{-2mh}x_k \big)^2\\
        &= \big(e^{-mh}x_k  + \|\bar\bvartheta_{k+1} -\bL'_h
        \|_{\Ltwo} \big)^2 + \|\bvartheta_{k+1} - 
        \bar\bvartheta_{k+1} \|_{\Ltwo}^2 .\label{eq:4.4}
    \end{align}
    The last term of the right-hand side can be bounded as 
    follows
    \begin{align}
        \|\bvartheta_{k+1} - \bar\bvartheta_{k+1} \|_{\Ltwo}
        & = h\|\nabla f_{k+U} -\mathbb E_U[\nabla f_{k+U}]
        \|_{\Ltwo}\\
        &\leqslant h\|\nabla f_{k+U} - \nabla f 
        (\bvartheta_k)\|_{\Ltwo}. 
    \end{align}
    Using the definition of $\bvartheta_{k+U}$, we get
    \begin{align}
        \|\bvartheta_{k+1} - \bar\bvartheta_{k+1} \|_{\Ltwo}^2 
        &\leqslant (Mh)^2 \Big((1/3)h^2\|\nabla f_k
        \|^2_{\Ltwo} + hp\Big).\label{eq:4.5}
    \end{align}
    We will also need the following lemma, the proof of 
    which is postponed.
    
    
    \begin{lemma}\label{lem:A2}
        If $Mh\leqslant 0.18$, then 
        $\|\bar\bvartheta_{k+1}-\bL'_h\|_{\Ltwo} 
        \leqslant (Mh)^2  \big\{ 0.7 h
        \|\nabla f_k\|_{\Ltwo} + 1.2 \sqrt{h p}
        \big\}$.
    \end{lemma}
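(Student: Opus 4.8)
The plan is to express $\bar\bvartheta_{k+1}-\bL'_h$ as the time integral of a difference of gradients evaluated along two trajectories run on $[0,h]$ with the \emph{same} Brownian motion, and then to control that integral by three nested applications of Minkowski's integral inequality combined with the $M$-Lipschitzness of $\nabla f$ (Assumption~\ref{asm:A-scgl}).

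First I would put both $\bar\bvartheta_{k+1}$ and $\bL'_h$ in integral form with a common noise term. Averaging \eqref{meth-mid2a} over $U_k$ gives $\bar\bvartheta_{k+1}=\bvartheta_k-h\,\mathbb E_U[\nabla f_{k+U}]+\sqrt2\,(\bW_{(k+1)h}-\bW_{kh})$; and the change of variables $s=hu$ in the definition \eqref{meth-mid1a} of $\bvartheta_{k+U}$ shows that $h\,\mathbb E_U[\nabla f_{k+U}]=\int_0^h\nabla f(\bY_s)\,\rmd s$, where $\bY_s:=\bvartheta_k-s\nabla f_k+\sqrt2\,(\bW_{kh+s}-\bW_{kh})$ is a ``frozen-drift'' approximation of $\bL'$ on $[0,h]$. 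Subtracting this from the integrated form \eqref{eq-int-lang1} of $\bL'_h$, the Brownian increments cancel and I obtain $\bar\bvartheta_{k+1}-\bL'_h=\int_0^h\big(\nabla f(\bL'_s)-\nabla f(\bY_s)\big)\,\rmd s$, whence, by Minkowski's integral inequality and $M$-smoothness, $\|\bar\bvartheta_{k+1}-\bL'_h\|_{\Ltwo}\le M\int_0^h\|\bL'_s-\bY_s\|_{\Ltwo}\,\rmd s$.

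For the inner quantity, the same cancellation of the noise and of the shared starting point $\bvartheta_k$ gives $\bL'_s-\bY_s=-\int_0^s\big(\nabla f(\bL'_r)-\nabla f(\bvartheta_k)\big)\,\rmd r$, so $\|\bL'_s-\bY_s\|_{\Ltwo}\le M\int_0^s\|\bL'_r-\bvartheta_k\|_{\Ltwo}\,\rmd r$. It then remains to bound the displacement $g(r):=\|\bL'_r-\bvartheta_k\|_{\Ltwo}$ of the diffusion. Here I would use a Grönwall-type argument: from \eqref{eq-int-lang1}, the triangle inequality, the identity $\sqrt2\,\|\bW_{kh+r}-\bW_{kh}\|_{\Ltwo}=\sqrt{2rp}$, and $\|\nabla f(\bL'_u)\|\le\|\nabla f_k\|+M\|\bL'_u-\bvartheta_k\|$, one gets $g(r)\le r\|\nabla f_k\|_{\Ltwo}+\sqrt{2rp}+M\int_0^r g(u)\,\rmd u$; since $r\mapsto r\|\nabla f_k\|_{\Ltwo}+\sqrt{2rp}$ is nondecreasing, Grönwall's inequality yields $g(r)\le e^{Mr}\big(r\|\nabla f_k\|_{\Ltwo}+\sqrt{2rp}\big)$.

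Substituting this back and integrating $\int_0^s$ and then $\int_0^h$, while pulling $e^{Mh}$ out of the integrals, leads to $\|\bar\bvartheta_{k+1}-\bL'_h\|_{\Ltwo}\le (Mh)^2 e^{Mh}\big(\tfrac16\,h\|\nabla f_k\|_{\Ltwo}+\tfrac{4\sqrt2}{15}\sqrt{hp}\big)$. Finally I would invoke $Mh\le0.18$, so $e^{Mh}\le e^{0.18}<1.2$, which bounds $\tfrac16 e^{Mh}$ by a constant below $0.7$ and $\tfrac{4\sqrt2}{15}e^{Mh}$ by one below $1.2$, giving the claim with room to spare. The only genuinely non-routine step is the self-referential bound on $g(r)$, in which $\|\nabla f(\bL'_u)\|$ must itself be controlled by the quantity being estimated; the rest is Minkowski's integral inequality, the Lipschitz property of $\nabla f$, and bookkeeping of constants.
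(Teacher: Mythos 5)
Your proof is correct and follows essentially the same route as the paper: write $\bar\bvartheta_{k+1}-\bL'_h$ as an integral of gradient differences, use the $M$-Lipschitz property of $\nabla f$, and run a Gr\"onwall argument to control the displacement of $\bL'_t$ from its starting point $\bvartheta_k$. The only cosmetic difference is that the paper first applies Jensen to reduce to the single random evaluation $\|\bvartheta_{k+U}-\bL'_{Uh}\|_{\Ltwo}$ and then enlarges the domain $[0,U_kh]$ to $[0,h]$, whereas you keep the deterministic time integral over $[0,h]$ throughout, which is why your intermediate constants ($\tfrac16$ and $\tfrac{4\sqrt2}{15}$) are about a factor of three tighter than the paper's before both are rounded up to $0.7$ and $1.2$.
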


    One can check by induction that if for some $A\in [0,1]$ 
    and for two positive sequences $\{B_k\}$ and $\{C_k\}$
    the inequality $x_{k+1}^2\leqslant \big\{(1-A)x_k + C_k
    \big\}^2 + B_k^2$ holds for every integer $k\geqslant 0$,
    then\footnote{This is an extension of 
    \citep[Lemma 7]{dalalyan2019user}. It essentially relies 
    on the elementary $\sqrt{(a+b)^2 + c^2}\leqslant a+ 
    \sqrt{b^2 + c^2}$, which should be used to prove the 
    induction step.} 
    \begin{align}\label{eq:xk}
        x_n\leqslant (1 - A)^n x_0 + \sum_{k=0}^n (1-A)^{n-k} 
        C_k +\bigg\{\sum_{k=0}^n (1-A)^{2(n-k)}B_k^2\bigg\}^{1/2}
    \end{align}
    In view of \eqref{eq:xk}, \eqref{eq:4.4}, \eqref{eq:4.5} and
    \Cref{lem:A2}, for $\rho = e^{-mh}$, we get
    \begin{align}
        x_n\leqslant \rho^{n} x_0 &+ (Mh)^2\sum_{k=0}^n 
        \rho^{n-k} \big(0.7 h\|\nabla f_k 
        \|_{\Ltwo} + 1.2 \sqrt{hp}\big)\\
        & +  
        Mh\bigg\{\sum_{k=0}^n \rho^{2(n-k)} \big((1/3)h^2 
        \|\nabla f_k\|_{\Ltwo}^2 + hp\big)
        \bigg\}^{1/2}\\
        \leqslant \rho^{n} x_0 &+ 0.7 (Mh)^2 h\sum_{k=0}^n 
        \rho^{n-k} \|\nabla f_k \|_{\Ltwo} + 1.32
        \frac{ M^2 h\sqrt{hp}}{m} \\
        & + \frac{Mh^2}{\sqrt{3}}
        \bigg\{\sum_{k=0}^n \rho^{2(n-k)} \|\nabla f 
        (\bvartheta_k)\|_{\Ltwo}^2\bigg\}^{1/2} + 
        0.92 Mh\sqrt{p/m}.\label{eq:xn-bound}
    \end{align}

    We need a last lemma for finding a suitable upper
    bound on the right-hand side of the last display.
    \begin{lemma}\label{lem:A3}
    If $Mh\leqslant 0.18$ and $k\geqslant 1$, then the following inequalities hold
    \begin{align}
        h^2\sum_{k=0}^n \rho^{n-k} \|\nabla f ( 
        \bvartheta_k)\|^2_{\Ltwo} 
        &\leqslant 1.7 Mh\rho^n\|\bvartheta_0\|_{
        \mathbb L_2}^2 + 4.4 Mh (p/m) \leqslant 0.31
        \rho^n\|\bvartheta_0\|_{\Ltwo}^2+ 0.8 (p/m).
    \end{align}    
    \end{lemma}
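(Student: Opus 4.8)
The plan is to bound the discounted sum $S_n:=\sum_{k=0}^n\rho^{n-k}\|\nabla f_k\|_{\Ltwo}^2$, with $\rho=e^{-mh}$, by combining a one-step \emph{descent inequality} for RLMC with the Polyak--Lojasiewicz inequality and the discounted summation-by-parts device used around \eqref{an3}. Translating $f$ so that $\btheta_*=0$, we have $\|\nabla f(\btheta)\|\leqslant M\|\btheta\|$ for all $\btheta$, and, by $M$-smoothness, $c_0:=\E[f(\bvartheta_0)-f(\btheta_*)]\leqslant \tfrac12 M\|\bvartheta_0\|_{\Ltwo}^2$.

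The first step is to prove that, for every $k\geqslant 0$,
\begin{align}
    c_{k+1}\;\leqslant\; c_k-\lambda h\,g_k+C_0\,Mhp,\qquad
    c_k:=\E[f(\bvartheta_k)-f(\btheta_*)],\quad g_k:=\|\nabla f_k\|_{\Ltwo}^2,
\end{align}
for numerical constants $\lambda$ and $C_0$ with $\lambda>1/2$ bounded away from $1/2$ uniformly in $\kappa$, valid under $Mh\leqslant 0.18$. This mimics the classical descent lemma for LMC: apply $M$-smoothness, $f(\bvartheta_{k+1})\leqslant f(\bvartheta_k)+\langle\nabla f_k,\bvartheta_{k+1}-\bvartheta_k\rangle+\tfrac12 M\|\bvartheta_{k+1}-\bvartheta_k\|^2$, and take expectations using \eqref{meth-mid1a}--\eqref{meth-mid2a}. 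The only new ingredient is the randomized midpoint gradient: one writes $\nabla f_{k+U}=\nabla f_k+(\nabla f_{k+U}-\nabla f_k)$, uses $\|\bvartheta_{k+U}-\bvartheta_k\|_{\Ltwo}^2=\tfrac13 h^2 g_k+hp$ (from $\E U_k^2=1/3$ and the independence of $\nabla f_k$ from the Brownian increment over $[kh,(k+U_k)h]$) together with the $M$-Lipschitzness of $\nabla f$, and notes that the correlation $\E\langle\nabla f_{k+U},\bW_{(k+1)h}-\bW_{kh}\rangle$ is nonnegative and of size $O(Mhp)$ by Stein's identity. Since $Mh$ is small, all midpoint corrections are of higher order, so $\lambda$ stays above $1/2$.

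Next I multiply the descent inequality by $\rho^{n-k}$, sum over $k=0,\dots,n$, and apply summation by parts to the telescoping part:
\begin{align}
    \sum_{k=0}^n\rho^{n-k}(c_k-c_{k+1})
    &=\rho^n c_0+(1-\rho)\sum_{k=1}^n\rho^{n-k}c_k-c_{n+1}\\
    &\leqslant \rho^n c_0+(1-\rho)\sum_{k=1}^n\rho^{n-k}c_k,
\end{align}
having dropped $c_{n+1}\geqslant 0$. The PL inequality $\|\nabla f(\btheta)\|^2\geqslant 2m(f(\btheta)-f(\btheta_*))$ gives $c_k\leqslant g_k/(2m)$, hence $(1-\rho)\sum_{k=1}^n\rho^{n-k}c_k\leqslant \tfrac{1-\rho}{2m}S_n$. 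Because $\tfrac{1-\rho}{2m}=\tfrac{1-e^{-mh}}{2m}\leqslant \tfrac h2<\lambda h$, this term can be absorbed on the left, yielding $\big(\lambda-\tfrac12\big)h\,S_n\leqslant \rho^n c_0+C_0\,Mhp\sum_{k=0}^n\rho^{n-k}$. Multiplying by $h$ and using $h^2\sum_{k=0}^n\rho^{n-k}\leqslant h^2/(1-\rho)\leqslant 1.1\,h/m$ together with $c_0\leqslant\tfrac12 M\|\bvartheta_0\|_{\Ltwo}^2$ gives
\begin{align}
    h^2 S_n\;\leqslant\;\frac{Mh\,\rho^n\|\bvartheta_0\|_{\Ltwo}^2}{2(\lambda-\tfrac12)}+\frac{1.1\,C_0\,Mh\,(p/m)}{\lambda-\tfrac12},
\end{align}
and a careful, non-wasteful version of the descent lemma (with $\lambda$ close to $1$ and $C_0$ close to $1$) turns this into the announced $1.7$ and $4.4$. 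The second inequality of the lemma is then immediate from $1.7\,Mh\leqslant 0.31$ and $4.4\,Mh\leqslant 0.8$, both consequences of $Mh\leqslant 0.18$.

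I expect the main obstacle to be the descent inequality itself --- precisely, verifying that the midpoint correction and the gradient/Brownian-increment correlation perturb the LMC-style estimate only at higher order in $Mh$, so that the descent coefficient $\lambda$ provably exceeds $1/2$ by a $\kappa$-independent margin; this is exactly what legitimizes the absorption step (bounding $\|\nabla f_k\|^2$ by $M^2\|\bvartheta_k-\btheta_*\|^2$ and summing directly would lose a factor $\kappa$). Everything downstream --- the discounted summation by parts, the PL reduction, and the constant chase --- is routine once that inequality is in place.
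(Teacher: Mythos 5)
Your architecture --- a one-step descent inequality, discounted summation by parts, and absorption via Polyak--\L{}ojasiewicz --- is exactly the backbone of the paper's argument, up to a point. The paper establishes precisely the descent inequality you describe, arriving at $\mathbb E[f_{k+1}]\leqslant \mathbb E[f_k]-0.771\,h\|\nabla f_k\|^2_{\Ltwo}+1.52\,Mhp$, and runs your PL-plus-summation-by-parts argument to get $h\,S_n(\nabla f^2)\leqslant 3.7\,\rho^n\mathbb E[f_0]+6.2\,\kappa p$. But it then explicitly flags that this route does \emph{not} yield the lemma's constants: ``Note that \eqref{eq:n3} is obtained under the Polyak--\L{}ojasiewicz condition\ldots However, using [strong convexity], we can obtain a similar inequality with slightly better constants.'' Translating \eqref{eq:n3} into your target with $\mathbb E[f_0]\leqslant\tfrac{M}{2}\|\bvartheta_0\|^2_{\Ltwo}$ gives $h^2S_n\leqslant 1.85\,Mh\,\rho^n\|\bvartheta_0\|^2_{\Ltwo}+6.2\,Mh\,p/m$; both coefficients exceed the $1.7$ and $4.4$ of the lemma.

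The gap is your claim that ``a careful, non-wasteful version of the descent lemma'' gives $\lambda$ and $C_0$ ``close to $1$'', because the midpoint corrections are ``higher order in $Mh$''. That is false: the drift part of the cross term $Mh\,\|\nabla f_k\|\,\|\bvartheta_{k+U}-\bvartheta_k\|$ contributes $\tfrac{Mh}{\sqrt3}\,h\|\nabla f_k\|^2\approx 0.10\,h\|\nabla f_k\|^2$ and the smoothness quadratic $\tfrac{M}{2}\|h\nabla f_{k+U}-\sqrt2\,\bxi_k\|^2$ contributes another $\approx 0.12\,h\|\nabla f_k\|^2$ at $Mh=0.18$; these are $\Theta(Mh)$, not $o(Mh)$, and the paper's $\lambda=0.771$, $C_0=1.52$ are already tight for this route. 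The paper closes the gap with a second half that you do not have: from \eqref{hgradf} it discounted-telescopes, bounds $\mathbb E[f(\bvartheta_j)]\leqslant\tfrac M2\|\bvartheta_j\|^2_{\Ltwo}$, and controls $\|\bvartheta_j\|^2_{\Ltwo}$ using the $(1-mh)$-contraction of the iterates, i.e.\ it uses strong convexity directly rather than only PL. You would need to reproduce that step (or sharpen the Young trade-offs in the descent lemma far beyond what you sketch) before the constants $1.7$ and $4.4$ --- and hence the usable bounds $0.31$ and $0.8$ --- come out. A minor secondary point: you invoke Stein's identity for $\E\langle\nabla f_{k+U},\Delta\bW\rangle$, but the cross term in the descent lemma is $\E[\nabla f_k^\top\bxi_k]=0$ ($\nabla f_k$ is $\mathcal F_{kh}$-measurable); correlation between $\nabla f_{k+U}$ and $\bxi_k$ enters only through the quadratic, which the paper handles with the triangle inequality $\|h\nabla f_{k+U}-\sqrt2\,\bxi_k\|_{\Ltwo}\leqslant\|h\nabla f_k-\sqrt2\,\bxi_k\|_{\Ltwo}+h\|\nabla f_{k+U}-\nabla f_k\|_{\Ltwo}$, avoiding Stein altogether.
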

    The claim of this lemma and together with \eqref{eq:xn-bound} 
    entail that
    \begin{align}
        x_n&\leqslant \rho^{n} x_0 + 0.7 (Mh)^2 h 
        \sum_{k=0}^n \rho^{n-k} \|\nabla f_k 
        \|_{\Ltwo} + \frac{Mh^2}{\sqrt{3}}
        \bigg\{\sum_{k=0}^n \rho^{2(n-k)} \|\nabla f
        (\bvartheta_k)\|_{\Ltwo}^2\bigg\}^{1/2}\\ 
        &\qquad\qquad + (1.32 \sqrt{\kappa Mh}  + 0.92)
        Mh\sqrt{p/m}\\
        &\leqslant \rho^{n} x_0 + \big( 0.74 \sqrt{
        \kappa M h} + 0.58 \big)Mh \bigg\{h^2\sum_{k=0}^n
        \rho^{n-k} \|\nabla f (\bvartheta_k)\|_{\mathbb
        L_2}^2\bigg\}^{1/2}\\ 
        &\qquad\qquad + \big(1.32 \sqrt{\kappa Mh}  
        + 0.92\big) Mh\sqrt{p/m}\\
        &\leqslant \rho^{n} x_0 + (0.42\sqrt{\kappa Mh} 
        + 0.33)Mh\rho^{n/2} \|
        \bvartheta_0 \|_{\Ltwo} + 
        \big(1.98 \sqrt{\kappa Mh}  + 1.44\big) Mh\sqrt{p/m}.
    \end{align}
    Assuming that $h$ is such that $(\sqrt{\kappa Mh} + 1)Mh
    \leqslant 1/4$ and noting that $\|\bvartheta\|_{\Ltwo} \leqslant \wass_2(\nu_0,\pi) + \sqrt{p/m}$, we 
    arrive at the desired inequality.

\subsection{Proof of technical lemmas}

In this section, we present the proofs of two technical 
lemmas that have been used in the proof of the main 
theorem. The first lemma provides an upper bound on the
error of the averaged iterate $\bar\bvartheta_{k+1}$ and
the continuous time diffusion $\bL'$ that starts from
$\bvartheta_k$ and runs until the time $h$. This upper
bound involves the norm of the gradient of the potential $f$
evaluated at $\bvartheta_k$. The second lemma aims at
bounding the discounted sums of the squared norms of 
these gradients. 

\subsubsection{Proof of \Cref{lem:A2} (one-step mean discretisation error)}
    We have
    \begin{align}
        \|\bar\bvartheta_{k+1}-\bL'_h\|_{\Ltwo} & =
        \bigg\|\bvartheta_{k}-h\mathbb E_U[\nabla f_{k+U}]
        -\bL'_0 +\int_0^h \nabla f(\bL'_s)\,\rmd s \bigg\|_{\Ltwo}\\
        & = \Big\|h\mathbb E_U[\nabla f_{k+U} - 
        \nabla f(\bL'_{Uh})] \Big\|_{\Ltwo}
        \leqslant Mh\big\|\bvartheta_{k+U} - \bL'_{Uh}\big\|_{\Ltwo}\\
        & \leqslant Mh\int_0^{h} \big\|\nabla f(\bL'_s) 
        - \nabla f(\bL'_0) \big\|_{\Ltwo}\,\rmd s.\label{eq:4.7}
    \end{align}    
    Let us define $\varphi(t) = \|\nabla f(\bL'_t)
    - \nabla f(\bL'_0) \|_{\Ltwo}$.  Using the Lipschitz
    continuity of $\nabla f$ and the definition of $\bL'$, 
    we arrive at
    \begin{align}
        \varphi(t)^2 & \leqslant M^2\bigg\{\bigg\|\int_0^t 
        \nabla f(\bL'_s)\,\rmd s\bigg\|_{\Ltwo}^2  + 2tp\bigg\}\\
        &\leqslant M^2\bigg\{\bigg(t\|\nabla f_k
        \|_{\Ltwo} + \int_0^t \big\|\nabla f(\bL'_s) - \nabla f(\bL'_0)\big\|_{\Ltwo} \,\rmd s \bigg)^2 + 2tp\bigg\}\\
        &\leqslant M^2\bigg\{ \int_0^t \big\|\nabla 
        f(\bL'_s) - \nabla f(\bL'_0)\big\|_{\Ltwo} \,\rmd s +  
        \sqrt{t^2\|\nabla f_k \|_{\Ltwo}^2 + 
        2tp}\bigg\}^2
    \end{align}
    or, equivalently,
    \begin{align}
        \varphi(t)
        &\leqslant M \int_0^t \varphi(s) \,\rmd s + M\sqrt{t^2 
        \|\nabla f_k \|_{\Ltwo}^2 + 2tp}. 
    \end{align}
    Using the Gr\"onwall inequality, we get
    \begin{align}\label{eq:phit}
        \varphi(t) \leqslant Me^{Mt} \sqrt{t^2 
        \|\nabla f_k \|_{\Ltwo}^2 + 2tp}.
    \end{align}
    Combining this inequality with the bound obtained in 
    \eqref{eq:4.7}, and using the inequality $e^{Mh}
    \leqslant 1.2$, we arrive at
    \begin{align}
        \|\bar\bvartheta_{k+1}-\bL'_h\|_{\Ltwo} & 
        \leqslant 1.2 M^2h \int_0^h \sqrt{s^2\|\nabla 
        f_k\|_{\Ltwo}^2 + 2sp}\,\rmd s\\
        & \leqslant 1.2 M^2h \sqrt{h}\bigg\{\int_0^h 
        \big(s^2 \|\nabla f_k\|_{\Ltwo}^2 
        + 2sp\big)\,\rmd s\bigg\}^{1/2}\\
        &\leqslant 1.2 M^2h \sqrt{h}\big\{ (h^3/3)
        \|\nabla f_k\|_{\Ltwo}^2 + h^2p
        \big\}^{1/2}.
    \end{align}
    This completes the proof.

\subsubsection{Proof of \Cref{lem:A3} (Discounted sum of squared gradients)}
    We have 
    \begin{align}
        f_{k+1} &\leqslant f_k + 
        \nabla f_k^\top (\bvartheta_{k+1} -
        \bvartheta_k) + \frac{M}{2}\|\bvartheta_{k+1} -
        \bvartheta_k\|_2^2\\
        &\leqslant f_k  -h 
        \nabla f_k^\top\nabla f_{k+U} +
        \sqrt{2}\nabla f_k^\top\bxi_h + \frac{M}{2}\|h\nabla f_{k+U} - \sqrt{2}\,\bxi_k\|_2^2\\
        &\leqslant f_k  -h 
        \|\nabla f_k\|_2^2 + 
        Mh\|\nabla f_k\|_2\|\bvartheta_{k+U}-\bvartheta_k\|_2 + 
        \sqrt{2}\nabla f_k^\top\bxi_h + \frac{M}{2}\|h\nabla f_{k+U} - \sqrt{2}\,\bxi_k\|_2^2
        \label{eq:n1}.
    \end{align}
    One checks that
    \begin{align}
        \|\bvartheta_{k+U}-\bvartheta_k\|_{\Ltwo}^2 & = 
        h^2\|U\nabla f_k\|_{\Ltwo}^2 + 
        2hp \mathbb E[U] = (h^2/3)\|\nabla f_k\|_{\Ltwo}^2 + hp \leqslant 0.011\frac{\|\nabla f_k\|_{\Ltwo}^2}{M^2} + hp 
    \end{align}
    and, therefore,
    \begin{align}
        M\|\nabla f_k\|_2\|\bvartheta_{k+U}-\bvartheta_k\|_2 &\leqslant 
        \big(0.011\|\nabla f_k\|_{\Ltwo}^4 + M^2hp \|\nabla f_k\|_2^2\big)^{1/2}\\
        &\leqslant 0.105 \|\nabla f_k\|_{\Ltwo}^2 + 
        4.55 M^2hp\\
        &\leqslant 0.105 \|\nabla f_k\|_{\Ltwo}^2 + 
        0.82 Mp.
    \end{align}
    Furthermore,
    \begin{align}
        \|h\nabla f_{k+U} - \sqrt{2}\,\bxi_k\|_{\Ltwo} 
        &\leqslant \|h\nabla f_k - \sqrt{2}\,\bxi_k\|_{\Ltwo} 
        + h \|\nabla f_{k+U} - \nabla f_k\|_{\Ltwo} \\
        & \leqslant \sqrt{h^2\|\nabla f_k\|_{\Ltwo}^2 + hp} + Mh
        \|\bvartheta_{k+U} - \bvartheta_k\|_{\Ltwo}\\
        &\leqslant \sqrt{h^2\|\nabla f_k\|_{\Ltwo}^2 + hp} + h\sqrt{0.011\|\nabla f_k\|_{\Ltwo}^2 + M^2hp}\\
        &\leqslant \sqrt{h^2\|\nabla f_k\|_{\Ltwo}^2 + hp} + \sqrt{0.011h^2\|\nabla f_k\|_{\Ltwo}^2 + 0.18^2hp}
    \end{align}
    implying that
    \begin{align}
        \frac{M}{2}\|h\nabla f_{k+U} - \sqrt{2}\,\bxi_k\|_{\Ltwo}^2 
        &\leqslant \frac{M}{2}\big({1.37h^2\|\nabla f_k\|_{\Ltwo}^2 + 1.4hp}\big)\\
        &\leqslant 0.124 h^2\|\nabla f_k\|_{\Ltwo}^2 + 
        0.7 Mhp.
    \end{align}
    Combining these inequalities with 
    \eqref{eq:n1}, we get
    \begin{align}\label{eq:n2}
        \mathbb E[f_{k+1}] \leqslant 
        \mathbb E[f_k] - 0.771h\|\nabla f_k\|_{\Ltwo}^2 
        +1.52Mhp.
    \end{align}
    Set $S_n(f) = \sum_{k=0}^n \rho^{n-k} f_k$ and 
    $S_n(\nabla f^2) = \sum_{k=0}^n \rho^{n-k} \|
    \nabla f_k\|_{\Ltwo}^2$. Using \Cref{lem:rho-sum}, we get
    \begin{align}
        \mathbb E[f_{n+1}] - \rho^{n}\mathbb E[f_0] 
        + \rho S_n(f) \leqslant S_n(f) - 0.771 h
        S_n(\nabla f^2) + \frac{1.52 Mhp}{1-\rho}
    \end{align}
    Since $mh\geqslant 1-\rho\geqslant 0.915 mh$, we get
    \begin{align}
        0.771 h
        S_n(\nabla f^2) &\leqslant \rho^{n}\mathbb E[f_0] 
        + (1-\rho)  S_n(f)  + {1.67 \kappa p}\\
        &\leqslant \rho^{n}\mathbb E[f_0] 
        + mh  S_n(f)  + {1.67 \kappa p}\\
        &\leqslant \rho^{n}\mathbb E[f_0] 
        + 0.5 h  S_n(\nabla f^2)  + {1.67 \kappa p}
    \end{align}
    where the last line follows from the Polyak-Lojasiewicz inequality. Rearranging the terms, we get 
    \begin{align}\label{eq:n3}
        hS_n(\nabla f^2) &\leqslant 
        3.7\rho^{n}\mathbb E[f_0] 
        +  {6.2 \kappa p}
    \end{align}
    Note that \eqref{eq:n3} is obtained under the 
    Polyak-Lojasiewicz condition, without explicitly 
    using the strong convexity of $f$. However, using
    the latter property, we can obtain a similar
    inequality with slightly better constants.
    
    Indeed, \eqref{eq:n2} yields
    \begin{align}\label{hgradf}
        h\mathbb E[\|\nabla f_k\|_2^2] \leqslant 1.3 \big(\mathbb E[f_k] - 
        \mathbb E[f_{k+1}]\big) + 1.98 Mhp.
    \end{align}
        In what follows, without loss of generality, we
    assume that $f(\btheta^*) = \min_{\btheta} f(\btheta) = 0$. 
    In view of \eqref{hgradf}, we have
    \begin{align}
        h\sum_{j=0}^k \rho^{k-j} \|\nabla f(\bvartheta_j)
        \|^2_{\Ltwo} 
        &\leqslant 
        1.3 \sum_{j=0}^k \rho^{k-j}(\mathbb E[f(\bvartheta_j) - f(\bvartheta_{j+1})]) 
        + \frac{1.98 Mhp}{1-e^{-mh}}\\
        &\leqslant 
        1.3 \big(\rho^{k}\mathbb E[f(\bvartheta_0)] - 
         \mathbb E[f_{k+1}]\big)
        +
        1.3  \sum_{j=1}^k \rho^{k-j}(1-\rho)\mathbb E[f(\bvartheta_j)] 
        + 2.1 \kappa p\\
        &\leqslant 
        1.3  \rho^{k+1}\mathbb E[f(\bvartheta_0)] +
        1.3 (1-\rho) \sum_{j=0}^k \rho^{k-j}\mathbb E[f(\bvartheta_j)] 
        + 2.1 \kappa p\\
        &\leqslant 
        \frac{1.3 M }{2}\rho^{k+1}\|\bvartheta_0\|_{\Ltwo}^2 +
        \frac{1.3}2 M(1-\rho) \sum_{j=0}^k \rho^{k-j}\|\bvartheta_j\|_{\Ltwo}^2
        + 2.1 \kappa p.
    \end{align}
    We have, in addition 
    \begin{align}
        \|\bvartheta_{k+1}\|_{\mathbb L_2}^2 &= \|\bvartheta_k 
        -h\nabla f_k\|^2_{\Ltwo} + 2hp
        \leqslant (1-mh)^2\|\bvartheta_k\|_{\Ltwo}^2
        + 2hp.
    \end{align}
    Therefore,
    \begin{align}
        \|\bvartheta_k\|^2_{\Ltwo} &\leqslant 
        (1-mh)^{2k} \|\bvartheta_0\|_{\Ltwo}^2
        + \frac{2hp}{2mh - (mh)^2} \leqslant (1-mh)^{2k}
        \|\bvartheta_0\|_{\Ltwo}^2 + \frac{1.1p}{m}.
    \end{align}
    Using this inequality in conjunction with the fact that $1-mh\leqslant \rho$, we arrive at
    \begin{align}
    h\sum_{j=0}^k \rho^{k-j} \|\nabla f(\bvartheta_j)\|^2_{\Ltwo} 
        &\leqslant 
        \frac{1.3 M}{2} \rho^{k+1} \|\bvartheta_0 
        \|_{\Ltwo}^2 + \frac{1.3}{2} M\rho^{2k} \|
        \bvartheta_0\|_{\Ltwo}^2 + 2.9 \kappa p.
    \end{align}
    This completes the proof of the lemma.

\clearpage

\section{The proof of the upper bound on the 
error of KLMC }\label{app:proof-klmc}

The goal of this section is to present the proof 
of the bound on the error of sampling of the 
``standard'' discretization of the kinetic Langevin 
diffusion. With a slight abuse of language, we will 
call it Euler-Maruyama discretized kinetic Langevin 
diffusion, or kinetic Langevin Monte Carlo (KLMC). 
To avoid complicated notation, and since there is 
no risk of confusion, throughout this section
$\bvartheta_k$ and $\bv_k$ will refer to 
$\bvartheta_k^{\sf KLMC}$ and $\bv_k^{\sf KLMC}$, 
respectively. We will also use the following 
shorthand notation:
\begin{align}
    f_n = f(\bvartheta_n),\quad \bg_n = 
    \nabla f_n = \nabla f(\bvartheta_n),\quad 
    \eta = \gamma h, \quad M_\gamma = M/\gamma.
\end{align}
The advantage of dealing with $\eta$ instead
of  $h$ is that the former is scale-free.

Note that the iterates of KLMC  satisfy
\begin{align}
    \bv_{n+1} & = (1- \alpha \eta ) \bv_n  - 
    \alpha \eta\, \bg_n + \sqrt{2\gamma\eta} \,
    \sigma \bxi_n \label{vn1}\\
    \bvartheta_{n+1} & = \bvartheta_n + \gamma^{-1}
    \eta \big(\alpha \bv_n - 
    \beta \eta \bg_n + \sqrt{2\gamma\eta}
    \, \tilde\sigma \bar\bxi_n\big),\label{eq:klmc}
\end{align}
where 
\begin{align}
    \alpha &= \frac{1-e^{-\eta}}{\eta} \in 
    (0,1),\qquad \beta = \frac{e^{-\eta} - 1 + 
    \eta  }{\eta^2} \in (0,1/2),\\
    \sigma^2 &=  \frac{1-e^{-2\eta}}{2\eta} 
    \in (0,1),\qquad \tilde\sigma^2 =  \frac{2(1 - 
    2\eta + 2\eta^2 -e^{-2\eta} )}{
    (2\eta)^3} \in (0,1/3)
\end{align}
and $\bxi_n,\bar\bxi_n$ are two $\mathcal N_p
(\mathbf 0, \mathbf I_p)$-distributed random 
vectors independent of $(\bvartheta_n,\bv_n)$.     

Since we assume throughout this section that 
$2Mh\leqslant 0.1$,  $\gamma\geqslant 2M$ and 
$\kappa \geqslant 10$, we have 
\begin{align}\label{eq:mh/gamma}
    \alpha  = \frac{1 - \exp(-\eta)}{\eta} 
    \geq 0.95,\quad\text{and}
    \quad 
    mh = \frac{Mh}{\kappa} \leqslant \frac{Mh}{10}
    \leqslant\frac{1}{200}.
\end{align} 
The latter, in particular, implies the following 
bound for $\varrho$:
\begin{equation}\label{eq:rhohgamma}
     1 - mh \leqslant \varrho = e^{- mh} \leqslant 
     1 - {0.99mh} = 1 -  {0.99m \eta}/{\gamma}.
\end{equation}

For any sequence $\omega = (\omega_n)_{n\in \NN}$ of 
real numbers, we denote by $S_n(\omega)$ the 
$\rho$-discounted sum $\sum _{k=0}^n \rho^{n-k} 
\omega_k$. Below we present a simple lemma for the 
function $S_n(\cdot)$ that we will use repeatedly 
in this proof.

\begin{lemma}[Summation by parts]\label{lem:rho-sum}
    Suppose $\omega = (\omega_n)_{n\in \NN}$ is a
    sequence of real numbers and define $S_n^{+1} 
    (\omega) := \sum_{k = 0}^{n} \varrho^{n-k} 
    \omega_{k+1}$. Then, the following identity is true
    \begin{equation}
        S_n^{+1} (\omega) = \omega_{n+1} - \varrho^{n+1} 
        \omega_0 + \varrho S_n(\omega).
    \end{equation}
\end{lemma}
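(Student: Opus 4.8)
The plan is to prove the identity by a single change of summation index, with an optional cross-check by induction on $n$; there is essentially no difficulty here beyond careful bookkeeping of the index ranges.

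First I would unfold the definition: by construction $S_n^{+1}(\omega) = \sum_{k=0}^n \varrho^{n-k}\omega_{k+1}$. Substituting $j = k+1$ shifts the summation range to $j \in \{1,\dots,n+1\}$ and turns the exponent $n-k$ into $n+1-j$, so that $S_n^{+1}(\omega) = \sum_{j=1}^{n+1}\varrho^{\,n+1-j}\omega_j$.

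Next I would put the remaining term on the same footing: multiplying the definition $S_n(\omega) = \sum_{k=0}^n \varrho^{n-k}\omega_k$ by $\varrho$ and relabelling $j=k$ gives $\varrho S_n(\omega) = \sum_{j=0}^n \varrho^{\,n+1-j}\omega_j$. Both sums now carry the same weights $\varrho^{\,n+1-j}\omega_j$, the first over $j=1,\dots,n+1$ and the second over $j=0,\dots,n$. Forming the difference $S_n^{+1}(\omega) - \varrho S_n(\omega)$, the overlapping terms for $j=1,\dots,n$ cancel, leaving only the $j=n+1$ term of the first sum, namely $\varrho^{0}\omega_{n+1} = \omega_{n+1}$, minus the $j=0$ term of the second sum, namely $\varrho^{\,n+1}\omega_0$. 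Rearranging yields $S_n^{+1}(\omega) = \omega_{n+1} - \varrho^{\,n+1}\omega_0 + \varrho S_n(\omega)$, which is exactly the asserted identity.

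As an alternative route, one can induct on $n$: the base case $n=0$ reads $\omega_1 = \omega_1 - \varrho\omega_0 + \varrho\omega_0$, and the inductive step is immediate from the one-step recursions $S_{n+1}^{+1}(\omega) = \varrho\,S_n^{+1}(\omega) + \omega_{n+2}$ and $S_{n+1}(\omega) = \varrho\,S_n(\omega) + \omega_{n+1}$. The only point that requires any attention is the adjustment of the limits after the shift $j=k+1$ — in particular that the upper limit becomes $n+1$ while the lower limit becomes $1$ — which is what produces the boundary terms $\omega_{n+1}$ and $\varrho^{\,n+1}\omega_0$.
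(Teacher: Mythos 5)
Your proof is correct and follows essentially the same route as the paper: reindex the shifted sum and compare term by term with $\varrho S_n(\omega)$, picking up the two boundary terms. No issues.
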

\begin{proof}
    The proof is based on simple algebra:\\
    $\displaystyle~\qquad\qquad
    S_n^{+1} (\omega) 
    = \omega_{n+1} + \sum_{j = 1}^{n} \varrho^{n-j+1}
    \omega_{j} = \omega_{n+1} + \varrho\brr{S_n(\omega) 
    - \varrho^n \omega_0}.
    $
\end{proof}

\subsection{Exponential mixing of continuous-time
kinetic Langevin diffusion}

Consider the kinetic Langevin diffusions
\begin{align}
    d\bL_t &=\bV_t\, dt\qquad
    d\bV_t = -\gamma \bV_t\, dt-\gamma \grad f(\bL_t)dt 
    + \sqrt{2}\gamma\, d\bW_t
    \label{eq:klmc_new}
\end{align}

\begin{proposition}
    \label{prop:klmc_contr}
    Let $\bV_0,\bL_0$ and $\bL_0'$ be random vectors 
    in $\RR^p$.  Let $(\bV_t,\bL_t)$ and $(\bV_t', \bL_t')$ 
    be kinetic Langevin diffusions defined 
    in~\eqref{eq:klmc_new} driven by the same Brownian motion 
    and starting from $(\bV_0,\bL_0)$ and $(\bV_0',\bL'_0)$ 
    respectively. It holds for any $t\geqslant 0$ that
    \begin{align}
        \bigg\|\bfC \begin{bmatrix}
            \bV_t-\bV'_t \\
            \bL_t-\bL'_t
        \end{bmatrix}
        \bigg\|
        \leqslant e^{-\{m\wedge(\gamma - M)\}t} 
        \bigg\|\bfC \begin{bmatrix}
            \bV_0-\bV'_0 \\
            \bL_0-\bL'_0
        \end{bmatrix}
        \bigg\|,\quad \text{with}\quad 
        \bfC = \begin{bmatrix}
            \bfI_p & \mathbf 0_p\\
            \bfI_p & \gamma\bfI_p
        \end{bmatrix}.
    \end{align}
    \end{proposition}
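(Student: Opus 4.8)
The plan is to establish the contraction for the coupled diffusions by differentiating the squared norm $\Phi(t) := \bigl\| \bfC [\bV_t - \bV_t',\ \bL_t - \bL_t']^\top \bigr\|^2$ in time and showing that $\frac{d}{dt}\Phi(t) \leqslant -2\{m\wedge(\gamma-M)\}\,\Phi(t)$, from which the claim follows by Gr\"onwall's inequality. First I would introduce the difference processes $\bz_t := \bV_t - \bV_t'$ and $\by_t := \bL_t - \bL_t'$. Because both diffusions are driven by the same Brownian motion, the stochastic terms cancel, and the pair $(\bz_t,\by_t)$ solves the \emph{deterministic} linear (but non-autonomous, since $\nabla^2 f$ varies along the path) ODE system $\dot\by_t = \bz_t$ and $\dot\bz_t = -\gamma \bz_t - \gamma\, \bfH_t \by_t$, where $\bfH_t := \int_0^1 \nabla^2 f(\bL_t' + s(\bL_t - \bL_t'))\,ds$ is a symmetric matrix satisfying $m\bfI_p \preccurlyeq \bfH_t \preccurlyeq M\bfI_p$ by Assumption~\ref{asm:A-scgl}. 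This is the standard synchronous-coupling reduction and requires no new ideas.

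Next I would write out $\Phi(t)$ explicitly in terms of $\by_t$ and $\bz_t$. Since $\bfC[\bz_t,\by_t]^\top = [\bz_t,\ \bz_t + \gamma\by_t]^\top$, we have $\Phi(t) = \|\bz_t\|^2 + \|\bz_t + \gamma \by_t\|^2 = 2\|\bz_t\|^2 + 2\gamma\, \bz_t^\top \by_t + \gamma^2 \|\by_t\|^2$. Differentiating and substituting $\dot\by_t = \bz_t$, $\dot\bz_t = -\gamma\bz_t - \gamma\bfH_t\by_t$, one obtains after collecting terms an expression of the form $\frac{d}{dt}\Phi(t) = -2\gamma\,\by_t^\top \bfH_t \by_t - 2\gamma^2\, \bz_t^\top \by_t - 2\gamma\, \by_t^\top\bfH_t\bz_t \cdot(\text{coefficient}) + \dots$; I would carry out this routine calculation carefully and group the result as a quadratic form in $(\by_t,\bz_t)$. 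The cross terms in $\bz_t^\top\by_t$ should largely telescope, leaving $\frac{d}{dt}\Phi(t) = -2\gamma\,\by_t^\top\bfH_t\by_t + 2\gamma^2\|\by_t\|^2 - 2\gamma\|\bz_t\|^2 - 2\gamma^2 \by_t^\top \bfH_t \by_t/\gamma \cdot (\dots)$, which I then need to bound using $m\bfI_p \preccurlyeq \bfH_t \preccurlyeq M\bfI_p$.

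The main obstacle — and the heart of the argument — is verifying the matrix inequality showing that the resulting quadratic form dominates $2\{m \wedge (\gamma - M)\}\,\Phi(t)$. Concretely, after the differentiation one must check that for every symmetric $\bfH$ with $m\bfI_p\preccurlyeq\bfH\preccurlyeq M\bfI_p$ and every $\bz,\by\in\RR^p$, writing $\mu := m\wedge(\gamma-M)$,
\begin{align}
    \gamma\, \by^\top\bfH\by - \gamma^2 \by^\top\by + \gamma\|\bz\|^2 + (\text{cross terms}) \;\geqslant\; \mu\bigl(2\|\bz\|^2 + 2\gamma\bz^\top\by + \gamma^2\|\by\|^2\bigr).
\end{align}
I would handle this by diagonalizing $\bfH$ (equivalently, reducing to the scalar case since all matrices involved are built from $\bfH$, $\bfI_p$, and the vectors $\by,\bz$ paired with $\bfH$), so that it reduces to checking a $2\times 2$ positive-semidefiniteness condition parametrized by an eigenvalue $\lambda \in [m,M]$ of $\bfH$. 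The condition $\gamma \geqslant m + M$ (assumed in the Proposition's hypothesis, inherited from \cite{dalalyan_riou_2018}) is exactly what makes this $2\times 2$ matrix positive semidefinite for all $\lambda \in [m,M]$; one checks the trace and determinant signs. Once the differential inequality $\frac{d}{dt}\Phi(t) \leqslant -2\mu\,\Phi(t)$ is in hand, Gr\"onwall gives $\Phi(t) \leqslant e^{-2\mu t}\Phi(0)$, and taking square roots yields the stated bound. A minor technical point worth noting is justifying differentiation under the randomness: since $(\bz_t,\by_t)$ solves a deterministic ODE for each realization of $(\bV_0,\bL_0,\bL_0',\bW)$, the inequality $\Phi(t)\leqslant e^{-2\mu t}\Phi(0)$ holds pathwise, and then one takes expectations only at the very end if a bound in $\Ltwo$ is desired — but as stated the Proposition is already a pathwise (almost sure) inequality, so no expectation is needed.
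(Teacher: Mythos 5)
Your overall strategy (synchronous coupling so the Brownian terms cancel, reduce to a deterministic linear ODE for $(\by_t,\bz_t)$ via the integrated Hessian $\bfH_t$, differentiate $\Phi(t)=\big\|\bfC[\bz_t,\by_t]^\top\big\|^2$, and close with Gr\"onwall) is exactly the one the paper uses, and the observation that the bound is pathwise is correct.

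There is, however, a genuine error in your third paragraph. You assert that the condition $\gamma\geqslant m+M$ is part of the Proposition's hypothesis and is what makes the $2\times 2$ matrix PSD. Neither is true. The Proposition imposes no lower bound on $\gamma$; instead the contraction rate is \emph{defined} as $\mu=m\wedge(\gamma-M)$, which automatically adapts so the inequality holds for every $\gamma>0$. (The condition $\gamma\geqslant m+M$ appears in the main text only so that $\mu=m$, i.e.\ so the rate simplifies to $e^{-mt}$; it is not needed for the Proposition.) If you carry out your $2\times 2$ check, you will find the determinant is $\gamma^2(\lambda-\mu)(\gamma-\mu-\lambda)$ with $\lambda\in[m,M]$, which is nonnegative for \emph{all} $\gamma$ precisely because $\mu\leqslant\gamma-M\leqslant\gamma-\lambda$; no additional hypothesis is required.

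A smaller point: the paper avoids the $2\times 2$ verification entirely by differentiating directly in the coordinates $\bY_t=\bz_t+\gamma\by_t$, $\bZ_t=\bz_t$ rather than expanding $\Phi$ in $(\by_t,\bz_t)$. In those coordinates $\dot\bY_t=-\bfH_t(\bY_t-\bZ_t)$ and $\dot\bZ_t=-\gamma\bZ_t-\bfH_t(\bY_t-\bZ_t)$, and
\begin{align}
\frac{\rmd}{\rmd t}\big(\|\bY_t\|^2+\|\bZ_t\|^2\big)
= -2\bY_t^\top\bfH_t\bY_t - 2\gamma\|\bZ_t\|^2 + 2\bZ_t^\top\bfH_t\bZ_t\,,
\end{align}
because the two cross terms $\pm 2\bY_t^\top\bfH_t\bZ_t$ cancel by symmetry of $\bfH_t$. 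Bounding $\bfH_t$ above and below by $M\bfI_p$ and $m\bfI_p$ then gives the rate $m\wedge(\gamma-M)$ in one line. Your route through $(\by_t,\bz_t)$ is correct in principle, but it leaves a genuine cross term $-4\gamma\,\bz_t^\top\bfH_t\by_t$ that does not telescope, which is precisely why you then need the eigenvalue-by-eigenvalue PSD check; working in $(\bY_t,\bZ_t)$ sidesteps that extra work.
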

\begin{proof}[Proof of~\Cref{prop:klmc_contr}]
Set $\bY_t:=\bV_t-\bV'_t+\gamma(\bL_t-\bL'_t)$, 
$\bZ_t:=\bV_t - \bV'_t$, that is 
\begin{align}
    \begin{bmatrix}
        \bZ_t\\
        \bY_t
    \end{bmatrix} = \bfC
    \begin{bmatrix}
        \bV_t - \bV_t'\\
        \bL_t - \bL_t'
    \end{bmatrix}.
\end{align}
We note that by the Taylor expansion, we have
\begin{align}
    \grad f(\bL_t)-\grad f(\bL'_t)=\bfH_t(\bL_t-\bL'_t)\,,
\end{align}
where $\bfH_t:=\int_0^1\grad^2 f(\bL_t-x(\bL_t-\bL'_t))\,\rmd x$. 
By the definition of $(\bV_t,\bL_t)$ and $(\bV'_t,\bL'_t)$, we find
\begin{align}
    \frac{\rmd}{\rmd t} (\bV_t - \bV'_t + \gamma(\bL_t-\bL'_t))
    & = -\gamma\bfH_t(\bL_t-\bL'_t)\\
    & = -\bfH_t(\bY_t - \bZ_t)\,.
\end{align}
Similarly, we obtain
\begin{align}
    \frac{\rmd}{\rmd t} (\bV_t-\bV'_t)
    &= -\gamma(\bV_t-\bV'_t)
    -\gamma\bfH_t(\bL_t-\bL'_t)\\
    &=-\gamma \bZ_t-\bfH_t(\bY_t - \bZ_t)\,.
\end{align}
This implies
\begin{align}
 \frac{d}{dt}\big[\norm{\bY_t}^2+\norm{\bZ_t}^2\big]
    & = 2\bY_t^\top(-\bfH_t\bY_t+\bfH_t\bZ_t)
        +2\bZ_t^\top(-\gamma \bZ_t-\bfH_t\bY_t+\bfH_t\bZ_t)\\
    &\leqslant 2\big( -m\norm{\bY_t}^2-\gamma \norm{\bZ_t^2} 
    + M\norm{\bZ_t}^2\big)\\
    &\leqslant -2(m\wedge  (\gamma - M))\bigg\|
 \begin{bmatrix}
 \bZ_t\\
 \bY_t
 \end{bmatrix}
 \bigg\|^2\,.
\end{align}
Invoking Gronwall's inequality, we get
\begin{align}
    \bigg\|
     \begin{bmatrix}
     \bZ_t\\
     \bY_t
     \end{bmatrix}
     \bigg\|
    \leqslant \exp\big( -\{m\wedge  (\gamma - M)\}t\big)
    \bigg\|
     \begin{bmatrix}
        \bZ_0\\
        \bY_0
     \end{bmatrix}
     \bigg\|
\end{align}
as desired.
\end{proof}

\subsection{Proof of \Cref{thm:klmc1}}

Let $\bvartheta_n$, $\bv_n$ be the iterates of 
the KLMC algorithm. Let $(\bL_t,\bV_t)$ be the 
kinetic Langevin diffusion,  coupled with
$(\bvartheta_n, \bv_n)$ through the same Brownian
motion $(\bW_t; t\geqslant 0)$ and starting from 
a random point $(\bL_0, \bV_0)\propto \exp(- 
f(\by) + \frac{1}{2\gamma} \|\bv \|_2^2)$ such that
$\bV_0=\bv_0$. This means that 
\begin{align}
    \bv_{n+1} & = \bv_n e^{-\eta}  - \gamma 
    \int_0^{h} e^{-\gamma(h-s)}\,\rmd s 
    \nabla f(\bvartheta_n) + \sqrt{2}\,\gamma 
    \int_0^h e^{-\gamma(h-s)}\,\rmd\bW_s\\
    \bvartheta_{n+1} & = \bvartheta_n + \int_0^h 
    \bigg(\bv_n e^{-\gamma u}  -  \gamma\int_0^u 
    e^{-\gamma(u-s)}\,\rmd s \nabla f(\bvartheta_n) 
    + \sqrt{2}\,\gamma \int_0^u e^{-\gamma (u-s)}\,
    \rmd\bW_s\bigg)\,\rmd u\,.
\end{align}
We also consider the kinetic Langevin diffusion,
$(\bL',\bV')$, defined on $[0,h]$ with the starting 
point $(\bvartheta_n, \bv_n)$ and driven by the 
Brownian motion $(\bW_{nh+t} - \bW_{nh};t \in[0,h])$. 
It satisfies
\begin{align}
    \bV'_t & = \bv_n e^{-\gamma t}  -  \gamma \int_0^t 
        e^{-\gamma(t-s)} \nabla f(\bL'_s)\,\rmd s +
        \sqrt{2}\,\gamma  \int_0^t e^{-\gamma(t-s)} 
        \,\rmd\bW_s\\
    \bL'_t & = \bvartheta_n +  \int_0^t \bV'_s\,
    \rmd s\,.
\end{align}
Our goal will be to bound the term $x_n$ defined by
\begin{align}\label{eq:xn}
    x_n = \bigg\| \bfC
    \begin{bmatrix}
        \bv_n- \bV_{nh}\\
        \bvartheta_n-\bL_{nh}
    \end{bmatrix}
    \bigg\|_{\Ltwo}
    \quad \text{with}\quad 
    \bfC = \begin{bmatrix}
    \mathbf I_p & \mathbf{0}_p\\
    \mathbf I_p & \gamma\mathbf I_p
    \end{bmatrix}.
\end{align}
The triangle inequality yields
\begin{align}
    x_{n+1} &\leqslant \bigg\| \bfC
    \begin{bmatrix}
        \bv_n- \bV'_{h}\\
        \bvartheta_n-\bL'_{h}
    \end{bmatrix}
    \bigg\|_{\Ltwo}
    + \bigg\| \bfC
    \begin{bmatrix}
        \bV'_h- \bV_{nh}\\
        \bL'_h-\bL_{nh}
    \end{bmatrix}
    \bigg\|_{\Ltwo} \\
    &\leqslant 
    \bigg\| \bfC
    \begin{bmatrix}
        \bv_n- \bV'_{h}\\
        \bvartheta_n-\bL'_{h}
    \end{bmatrix}
    \bigg\|_{\Ltwo}
    +
    \varrho x_{n}\\
    &\leqslant \varrho x_n  + \sqrt{2}\,
    \|\bv_{n+1} - \bV'_h\|_{\Ltwo}  + \gamma\|
    \bvartheta_{n+1} - \bL'_h \|_{\Ltwo},
    \label{eq:iterKLMC}
\end{align}
where the second inequality follows from 
\Cref{prop:klmc_contr} (see also the
proof of \citep[Prop.~1]{dalalyan_riou_2018}), 
while the third inequality is a consequence of the 
elementary inequality $\sqrt{a^2 + (a+b)^2}\leqslant 
\sqrt{2}\,a + b$ for $a,b\geqslant 0$.

The next lemma gives an upper bound on the terms
appearing in the right-hand side of 
\eqref{eq:iterKLMC}. 

\begin{lemma}
\label{lem:klmc}
If~ $\nabla f$ is $M$-Lipschitz continuous, then for 
every step-size $\eta = \gamma h\geqslant 0$ and 
every $\gamma\geqslant 0$, the following holds
\begin{align}
\|\bv_{n+1}-\bV'_h\|_{\Ltwo}
    & \leqslant \tfrac16\big\{2\sqrt{\gamma p  \eta} + 
    3\|\bv_n\|_{\Ltwo} + \eta  \|\bg_n\|_{\Ltwo} 
    \big\} M_\gamma \eta^2 e^{M_\gamma \eta^2/2}\\
\gamma\|\bvartheta_{n+1}-\bL'_h\|_{\Ltwo}
    &\leqslant \tfrac16 \big( 0.6\sqrt{\gamma p\eta} 
    + \|\bv_n\|_{\Ltwo} + 0.25\eta\|\bg_n\|_{\Ltwo} +  
    \big)M_\gamma \eta^3 e^{M_\gamma\eta^2/2},
    \label{eq:thetadiff}
\end{align}
where $M_\gamma = M/\gamma$.
\end{lemma}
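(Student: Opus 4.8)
The plan is to directly compare the one-step KLMC update $(\bv_{n+1},\bvartheta_{n+1})$ with the exact kinetic diffusion $(\bV'_h,\bL'_h)$ started at the same point $(\bv_n,\bvartheta_n)$ and driven by the same Brownian motion. Writing out both $\bv_{n+1}$ and $\bV'_h$ using the integral (Duhamel) representation, the stochastic parts $\sqrt{2}\gamma\int_0^t e^{-\gamma(t-s)}\,\rmd\bW_s$ cancel exactly, and the drift parts differ only through the replacement of $\nabla f(\bL'_s)$ by $\nabla f(\bvartheta_n)=\bg_n$. Hence
\[
\bv_{n+1}-\bV'_h=\gamma\int_0^h e^{-\gamma(h-s)}\big(\nabla f(\bL'_s)-\bg_n\big)\,\rmd s,
\]
and similarly $\bvartheta_{n+1}-\bL'_h=\int_0^h(\bV'_u-\tilde\bV_u)\,\rmd u$ where $\tilde\bV_u$ is the velocity with $\nabla f$ frozen at $\bvartheta_n$, so it too reduces to a double time integral of $\nabla f(\bL'_s)-\bg_n$. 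The first step is therefore to set $\varphi(t):=\|\nabla f(\bL'_t)-\bg_n\|_{\Ltwo}$ and bound the two quantities of interest by $M$-Lipschitzness, giving $\|\bv_{n+1}-\bV'_h\|_{\Ltwo}\leqslant \gamma\int_0^h e^{-\gamma(h-s)}\varphi(s)\,\rmd s$ and an analogous iterated-integral bound for $\gamma\|\bvartheta_{n+1}-\bL'_h\|_{\Ltwo}$.

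The second step is to get a Gr\"onwall-type bound on $\varphi(t)$, exactly in the spirit of the proof of Lemma~\ref{lem:A2}. Using $\bL'_t=\bvartheta_n+\int_0^t\bV'_s\,\rmd s$ and the explicit form of $\bV'_s$, one writes $\nabla f(\bL'_t)-\bg_n=\bfH'_t(\bL'_t-\bvartheta_n)$ for an averaged Hessian $\bfH'_t$ with $\|\bfH'_t\|\leqslant M$, so $\varphi(t)\leqslant M\|\bL'_t-\bvartheta_n\|_{\Ltwo}$. Plugging in $\bL'_t-\bvartheta_n = \int_0^t\bV'_s\,\rmd s$ and the Duhamel form of $\bV'_s$, splitting off the ``frozen-gradient'' contribution $\psi(\gamma s)$-type terms and the Brownian contribution from the genuinely nonlinear remainder $\int_0^s e^{-\gamma(s-r)}(\nabla f(\bL'_r)-\bg_n)\,\rmd r$, one obtains an integral inequality of the form $\varphi(t)\leqslant M\int_0^t\varphi(s)\,\rmd s + (\text{explicit term in }\|\bv_n\|,\|\bg_n\|,\sqrt{\gamma p\,\eta})$. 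The explicit term has to be computed carefully: the contribution of $\bv_n$ through $\int_0^t e^{-\gamma s}\|\bv_n\|\,\rmd s\leqslant h\|\bv_n\|$, the contribution of $\bg_n$ through the $\beta$-type coefficient which is $O(h^2)$, and the variance of the Gaussian part $\sqrt{2}\gamma\int_0^t e^{-\gamma(t-s)}\rmd\bW_s$ integrated once in time, which contributes something of order $\sqrt{\gamma p}\cdot t$. Gr\"onwall then yields $\varphi(t)\leqslant e^{Mt}\times(\text{that explicit term})$; since $Mt\leqslant Mh=M_\gamma\eta$ one can bound $e^{Mt}$, but to match the stated $e^{M_\gamma\eta^2/2}$ one must instead track the exponent more tightly — noting that the effective Gr\"onwall constant is not $M$ but $M/\gamma$ times a factor $\eta$ coming from the extra time integration, i.e. the iterated kernel contributes $M_\gamma\eta^2/2$ rather than $Mh$. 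This bookkeeping of where the scale-free quantity $M_\gamma\eta^2$ appears is the crux.

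The third step is purely to assemble: substitute the Gr\"onwall bound on $\varphi$ into $\gamma\int_0^h e^{-\gamma(h-s)}\varphi(s)\,\rmd s$ and into the double integral for $\gamma\|\bvartheta_{n+1}-\bL'_h\|$, evaluate the remaining elementary time integrals (using $\gamma\int_0^h e^{-\gamma(h-s)}\,\rmd s=1-e^{-\eta}\leqslant \eta$ for the velocity bound, and one more power of $h$ for the position bound), and collect the numerical constants to arrive at the factors $\tfrac16$, the coefficients $2$, $3$, $1$ on $\sqrt{\gamma p\eta}$, $\|\bv_n\|$, $\eta\|\bg_n\|$ in the velocity estimate, and $0.6$, $1$, $0.25$ in the position estimate. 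I expect the main obstacle to be precisely the constant-tracking in step two and three: keeping the Brownian/variance term, the $\bv_n$-term and the $\bg_n$-term separate through a Gr\"onwall argument (rather than lumping them), and verifying that the accumulated multiplicative constants really are as small as claimed (in particular that the $\eta$-powers and the $e^{M_\gamma\eta^2/2}$ factor are correct), rather than any conceptual difficulty — the structure is a direct adaptation of the RLMC one-step mean-discretization lemma to the second-order SDE.
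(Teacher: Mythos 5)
Your plan is correct and follows essentially the same route as the paper: write $\bv_{n+1}-\bV'_h$ and $\gamma(\bvartheta_{n+1}-\bL'_h)$ as single and double time integrals of $\nabla f(\bL'_s)-\bg_n$, reduce by $M$-Lipschitzness to a Gr\"onwall inequality for $\|\bL'_s-\bvartheta_n\|_{\Ltwo}$ (equivalently your $\varphi$) with the triangular kernel $M\gamma\psi_1(s-t)\leqslant M\gamma(s-t)$, and then integrate out the explicit $\bv_n$, $\bg_n$ and Brownian contributions. The subtlety you flag about the exponent is exactly the step the paper uses: bounding $s-t\leqslant h-t$ inside the kernel before applying Gr\"onwall gives $e^{M\gamma s(h-s/2)}\leqslant e^{M_\gamma\eta^2/2}$ rather than the cruder $e^{Mh}$ that a constant-kernel Gr\"onwall would yield.
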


For $\eta\leqslant 0.2$ and $\gamma\geqslant 2M$, 
\Cref{lem:klmc} implies
\begin{align}
\|\bv_{n+1}-\bV'_h\|_{\Ltwo}
& \leqslant  M_\gamma \eta^2 (0.15\sqrt{\gamma p}
+ 0.51 \|\bv_n\|_{\Ltwo}
+0.17 \eta\|\bg_n\|_{\Ltwo}) \\
\gamma\|\bvartheta_{n+1}-\bL'_{h}\|_{\Ltwo}
& \leqslant  M_\gamma \eta^3 (0.046\sqrt{\gamma p}
+ 0.17\|\bv_n\|_{\Ltwo}
+0.043 \eta\|\bg_n\|_{\Ltwo}).
\end{align}
Therefore,
\begin{align}
    \sqrt{2}\, \| \bv_{n+1}  - \bV'_h\|_{\Ltwo}
    + \gamma\| \bvartheta_{n+1} - \bL'_h\|_{\Ltwo} 
    \leqslant M_\gamma\eta^2 \big(0.23\sqrt{\gamma p} 
    +  0.74\|\bv_n\|_{\Ltwo} + 0.25\eta\|\bg_n
    \|_{\Ltwo}).
    \label{eq:rem1}
\end{align}
Combining \eqref{eq:iterKLMC} and \eqref{eq:rem1}, 
we get
\begin{align}
    x_{n+1} &\leqslant \varrho x_n + M_\gamma\eta^2 
    \big(0.23\sqrt{\gamma p } +  0.74\|\bv_n\|_{\Ltwo} 
    + 0.25\eta\|\bg_n\|_{\Ltwo}).
\end{align}
From the last display, we infer that
\begin{align}\label{eq:klmc_ctr}
    x_{n} &\leqslant \varrho^{n} x_{0}  + M_\gamma
    \eta^2 \sum_{k=0}^{n-1} \varrho^{n-1-k}
    \big(0.23\sqrt{\gamma p} +  0.74\|\bv_k\|_{\Ltwo} 
    + 0.25\eta\|\bg_k\|_{\Ltwo}).
\end{align}
This implies that
\begin{align}\label{eq:klmc_ctr1}
    x_n 
    &\leqslant \varrho^n x_0 + \frac{0.23 
    M_\gamma\eta^2\sqrt{\gamma p}}{1-\varrho} + 
    0.74M_\gamma\eta^2 \sum_{k=1}^n \varrho^{n-k}
    \big(\| \bv_{k-1} \|_{\Ltwo} + 0.33\eta\| 
    \bg_{k-1}\|_{\Ltwo} \big)\\
    &\leqslant \varrho^n x_0 + \frac{0.23 M_\gamma
    \eta^2\sqrt{\gamma p }}{1-\varrho} + \frac{0.74
    M_\gamma\eta^2}{\sqrt{1-\varrho}} \bigg\{
    \sum_{k=1}^n \varrho^{n-k}\big(\| \bv_{k-1} 
    \|_{\Ltwo}^2 + 0.33\eta^2\| \bg_{k-1} \|_{\Ltwo}^2 
    \big)\bigg\}^{1/2}.
\end{align}
In view of \eqref{eq:rhohgamma}, 
$\varrho \leqslant 1 - 0.99 m\eta/\gamma$ and 
\begin{align}\label{eq:klmc_ctr2}
    x_n  &\leqslant \varrho^n x_0 + 0.233 \kappa \eta 
    \sqrt{\gamma p} + {0.74M_\gamma\eta}\bigg\{\frac{
    \gamma\eta}{m\varrho} \sum_{k=0}^n \varrho^{n-k} 
    \big(\| \bv_{k} \|_{\Ltwo}^2 + 0.33 \eta^2\| 
    \bg_{k} \|_{\Ltwo}^2\big)\bigg\}^{1/2}.
\end{align}

\begin{proposition}
 \label{lem:sum2} 
Assume that $\bv_0\sim\mathcal N(0,\gamma \bfI_p)$ is 
independent of $\bvartheta_0$. If $\gamma\geqslant 5M$, 
$\kappa \geqslant 10$ and $\eta \leqslant 1/10$ then
\begin{align}
    \eta\sum_{k=0}^{n}  \varrho^{n - k} \|\bg_k\|_{\Ltwo}^2
    &\leqslant 4.42\varrho^n \gamma 
    \,\mathbb E[ f_0 ] + \frac{1.11 \gamma^2 p}{m} + 
    4.98\big(x_n + 0.96\sqrt{\gamma p}\big)^2\\
    \eta\sum_{k=0}^{n}  \varrho^{n - k} \|\bv_k\|_{\Ltwo}^2
    &\leqslant 3.93\varrho^n\gamma  \mathbb E[f_0]
    + \frac{1.87\gamma^2 p}{m}
     + 3.2 \big(x_n + 0.96\sqrt{\gamma p}\big)^2.
\end{align}
\end{proposition}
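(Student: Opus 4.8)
The plan is to combine a Lyapunov (energy) estimate for the KLMC iterates with the summation-by-parts device of \Cref{lem:rho-sum}. Assume $f(\btheta_*)=0$, write $\bg_n=\nabla f(\bvartheta_n)$, and work with the Lyapunov functional
\[
   Y_n \;=\; \mathbb E\big[\,2\gamma f_n + \|\bv_n\|_2^2 + c\,\bg_n^\top\bv_n\,\big],
\]
for a suitable constant $c$ (the analog, rescaled to the present dynamics, of the cross term $2h(\alpha-M_\gamma\beta h)$ appearing in the commented-out variant); since $\gamma\geqslant5M$ and $\|\bg_n\|^2\leqslant2Mf_n$, one has $Y_n\asymp\mathbb E[\gamma f_n+\|\bv_n\|^2]\geqslant0$ for modest $|c|$. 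The first step is to establish a one-step inequality
\[
   Y_{n+1}-Y_n \;\leqslant\; -\,a_1\,\eta\,\mathbb E\|\bv_n\|_{\Ltwo}^2 \;-\; a_2\,\eta\,\mathbb E\|\bg_n\|_{\Ltwo}^2 \;+\; a_3\,\gamma p\,\eta ,
\]
with explicit $a_1,a_2,a_3>0$ and $a_2>1$. This follows by expanding $Y_{n+1}-Y_n$ through the updates \eqref{vn1}--\eqref{eq:klmc}: $M$-smoothness applied to $2\gamma(f_{n+1}-f_n)$ along $\bvartheta_{n+1}-\bvartheta_n$; the exact identity for $\mathbb E\|\bv_{n+1}\|^2-\mathbb E\|\bv_n\|^2$, whose dissipative leading term is $-(1-e^{-2\eta})\mathbb E\|\bv_n\|^2$ and whose Gaussian injection is $2\sigma^2\gamma p\,\eta$; and an ``inner-product lemma'' controlling $\mathbb E[\bg_{n+1}^\top\bv_{n+1}-\bg_n^\top\bv_n]$ via the Taylor expansion $\bg_{n+1}=\bg_n+\bfH_n(\bvartheta_{n+1}-\bvartheta_n)$ with $m\bfI_p\preccurlyeq\bfH_n\preccurlyeq M\bfI_p$, which is what turns the cross term into a genuine source of a negative $\|\bg_n\|^2$ contribution. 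The hypotheses $\gamma\geqslant5M$ (so $M_\gamma\leqslant1/5$), $\kappa\geqslant10$ and $\eta\leqslant1/10$ (so $\alpha\geqslant0.95$, $\beta\geqslant0.47$, etc.) make every cross-contamination term small relative to $a_1\eta\|\bv_n\|^2$ and $a_2\eta\|\bg_n\|^2$ while keeping the constants explicit.

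Rearranging to $a_1\eta\|\bv_n\|^2+a_2\eta\|\bg_n\|^2\leqslant a_3\gamma p\eta+(Y_n-Y_{n+1})$, multiplying by $\varrho^{n-k}$, summing over $k=0,\dots,n$ and applying \Cref{lem:rho-sum} to the telescoped part gives
\[
   a_1\,\eta\,S_n(\|\bv\|^2)+a_2\,\eta\,S_n(\|\bg\|^2)+Y_{n+1}-\varrho^{n+1}Y_0\;\leqslant\;\frac{a_3\gamma p\eta}{1-\varrho}+(1-\varrho)\,S_n(Y),
\]
with $S_n(\omega)=\sum_{k=0}^n\varrho^{n-k}\omega_k$. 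Since $1-\varrho\geqslant0.99\,m\eta/\gamma$ by \eqref{eq:rhohgamma}, the first term on the right is $\leqslant a_3\gamma^2p/(0.99\,m)$, producing the $\gamma^2p/m$ contribution. Next one treats the boundary terms: $Y_0=2\gamma\,\mathbb Ef_0+\gamma p$, because $\bv_0\sim\mathcal N_p(0,\gamma\bfI_p)$ is centered and independent of $\bvartheta_0$ (so $\mathbb E[\bg_0^\top\bv_0]=0$), the piece $\varrho^{n+1}\gamma p$ is absorbed into $\gamma^2p/m$ via $\gamma\geqslant5m$, and $2\varrho^{n+1}\gamma\,\mathbb Ef_0$ gives the $\varrho^n\gamma\,\mathbb Ef_0$ term; and $-Y_{n+1}\leqslant|Y_{n+1}|\leqslant\gamma M\|\bvartheta_{n+1}-\btheta_*\|^2+\|\bv_{n+1}\|^2+|c|\,\|\bg_{n+1}\|\|\bv_{n+1}\|$, which one converts using the synchronous coupling with the stationary diffusion: from the definition \eqref{eq:xn} of $x_k$ with $\bfC=\left[\begin{smallmatrix}\bfI_p&\mathbf 0_p\\\bfI_p&\gamma\bfI_p\end{smallmatrix}\right]$ one has $\|\bv_k-\bV_{kh}\|_{\Ltwo}\leqslant x_k$ and $\gamma\|\bvartheta_k-\bL_{kh}\|_{\Ltwo}\leqslant\sqrt2\,x_k$, while stationarity gives $\|\bV_{kh}\|_{\Ltwo}=\sqrt{\gamma p}$ and $\gamma\|\bL_{kh}-\btheta_*\|_{\Ltwo}\leqslant\gamma\sqrt{p/m}$ (using $\wass_2(\delta_{\btheta_*},\pi)\leqslant\sqrt{p/m}$). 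Hence $|Y_k|\lesssim(x_k+\sqrt{\gamma p})^2+\gamma^2p/m$, and since, by \eqref{eq:iterKLMC} and \Cref{lem:klmc}, $x_{n+1}\leqslant\varrho x_n+\delta_n$ with $\delta_n$ a small one-step discretization error, the endpoint term becomes the $4.98(x_n+0.96\sqrt{\gamma p})^2$ contribution.

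The decisive term is $(1-\varrho)S_n(Y)$. Replacing $|Y_k|$ by $2\gamma f_k+\|\bv_k\|^2+|c|\,|\bg_k^\top\bv_k|$, applying the Polyak-Lojasiewicz inequality $\|\bg_k\|^2\geqslant2mf_k$ (so $2\gamma f_k\leqslant(\gamma/m)\|\bg_k\|^2$) and Young's inequality to the cross term gives $|Y_k|\leqslant(\gamma/m+|c|/2)\|\bg_k\|^2+(1+|c|/2)\|\bv_k\|^2$; the crucial observation is that $1-\varrho\leqslant m\eta/\gamma$, so $(1-\varrho)S_n(\|\bg\|^2)\leqslant(m/\gamma)\,\eta\,S_n(\|\bg\|^2)$ and multiplication by $\gamma/m+|c|/2$ yields coefficient $1+|c|m/(2\gamma)\leqslant1+|c|/10$, whereas the $\|\bv\|^2$-part carries the even smaller factor $(1+|c|/2)\,m/\gamma\leqslant(1+|c|/2)/5$. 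Thus $(1-\varrho)S_n(Y)$ reproduces the left-hand side quantities $\eta S_n(\|\bg\|^2)$ and $\eta S_n(\|\bv\|^2)$ with coefficients below $a_2$ and $a_1$ respectively (plus a $\gamma^2p/m$ remainder); moving them across and dividing through by $a_2-(1+|c|/10)$ (resp.\ $a_1-(1+|c|/2)/5$) produces the two stated inequalities, with the numerical constants $4.42,1.11,4.98$ and $3.93,1.87,3.2$ emerging from tracking the explicit $a_i$'s.

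I expect the main obstacle to be the one-step Lyapunov inequality with sharp enough explicit constants. After collecting terms, the dissipative part is a quadratic form $-c\alpha\eta\|\bg_n\|^2-\alpha\eta(c-2\alpha\eta)\bg_n^\top\bv_n-(1-e^{-2\eta})\|\bv_n\|^2$ together with $O(M_\gamma\eta)\|\bv_n\|^2$ and $O(\eta^2)$ corrections, and $c$ (of order $1$) must be chosen large enough that the effective negative $\|\bg_n\|^2$ coefficient stays above $1+|c|/10$ after the Young bound on the $\bg_n^\top\bv_n$ term, yet small enough that this form remains negative definite and that $|c\,\bg_n^\top\bv_n|$ stays negligible against $2\gamma f_n+\|\bv_n\|^2$; running the positive-definiteness bookkeeping leans on $\gamma\geqslant5M$ and $\eta\leqslant1/10$ throughout. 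A secondary difficulty is the self-referential character of the last step: since \Cref{lem:rho-sum} feeds $\eta S_n(\|\bv\|^2)$ and $\eta S_n(\|\bg\|^2)$ back into the right-hand side, one must verify that the returning coefficients are strictly below $a_1$ and $a_2$ --- exactly where $m/\gamma\leqslant1/5$ and the constant $0.99$ in $1-\varrho\geqslant0.99\,m\eta/\gamma$ are used --- so that the inequalities can actually be solved.
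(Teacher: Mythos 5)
Your plan is a genuinely different route from the paper's, and it matches (with a rescaled cross coefficient) the commented-out earlier draft of this proposition that still sits in the appendix source. The proof that actually appears keeps the three discounted sums $S_n(g^2)$, $S_n(v^2)$ and $S_n(z)$ (with $z_k=\mathbb E[\bv_k^\top\bg_k]$) \emph{separate}, extracts from \Cref{lem:inprod}, \Cref{lem:klmc3} and \Cref{lem:klmc4} a triangular system of coupled inequalities among them, and solves that system by substitution; the $(x_n+0.96\sqrt{\gamma p})^2$ endpoint appears as the boundary term $|z_{n+1}|$ that survives the summation by parts for $z$, and is then bounded through \Cref{lem:zn} via the synchronous-coupling residual $x_n$. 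Your single Lyapunov functional bundles that system into one recursion — conceptually cleaner, but for an explicit-constants statement it is a lossier device.

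Two concrete caveats. First, for any $c$ in the admissible range (you need $|c|\leqslant 2/\sqrt{M_\gamma}$ to keep $Y_n\geqslant 0$ via $\|\bg_n\|^2\leqslant 2Mf_n$ and Young), the endpoint $-Y_{n+1}$ is $\leqslant 0$ and should simply be discarded; your $-Y_{n+1}\leqslant|Y_{n+1}|\lesssim(x_{n+1}+\sqrt{\gamma p})^2+\gamma^2p/m$ step is unnecessary and only worsens the bound. The natural conclusion of your route therefore has \emph{no} $(x_n+\ldots)^2$ endpoint but a correspondingly larger $\gamma^2p/m$ term, i.e.\ a differently shaped statement. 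Second and more importantly, the $\|\bg_n\|^2$ dissipation in $Y_{n+1}-Y_n$ comes almost entirely from $c(z_{n+1}-z_n)\approx -c\alpha\eta z_n-c\alpha\eta\|\bg_n\|^2$, and after Young-absorbing the leftover $-c\alpha\eta z_n$ into the diagonal (to get a clean $-a_1'\eta\|\bv_n\|^2-a_2'\eta\|\bg_n\|^2$), the effective $a_2'$ you can squeeze out, subject to $Y_n\geqslant 0$ and positive $a_1'$, tops out around $1.2$--$1.4$ with $c\approx 2.5$ while the returning coefficient from $(1-\varrho)S_n(Y)$ is $\geqslant 1$ by Polyak--Lojasiewicz. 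The surviving margin of a few tenths is what divides the right-hand side, so the $\gamma^2p/m$ coefficient you extract will be several times the proposition's $1.11$ and $1.87$. In short: your strategy yields a correct inequality of the right order, but not this proposition with its stated constants, which is precisely why the authors moved from the Lyapunov version to the cascaded system. A small side remark: your $m/\gamma\leqslant 1/5$ should read $m/\gamma\leqslant 1/(5\kappa)\leqslant 1/50$ (the hypothesis $\gamma\geqslant 5M$ bounds $M/\gamma$, not $m/\gamma$); this actually helps you, but it should be stated correctly.
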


We can apply \Cref{lem:sum2} and $\varrho \geqslant
0.998$ to infer that 
\begin{align}\label{eq:klmc_ctr2}
    x_n 
    &\leqslant \varrho^n x_0  + 0.233 \kappa \eta 
    \sqrt{\gamma p} + 0.74 M_\gamma \eta \bigg\{ 
    \frac{\gamma}{m\varrho}\Big(3.98\varrho^n 
    \gamma\,\mathbb E[ f_0 ] + \frac{1.87\gamma^2 
    p}{m} + 3.25\big(x_n + 0.96\sqrt{\gamma p}\big)^2\Big) 
    \bigg\}^{1/2}\\
    &\leqslant \varrho^n x_0 + 0.233 \kappa \eta 
    \sqrt{\gamma p} + 0.74 M_\gamma \eta \bigg\{ 
    \frac{\gamma}{m}\Big(3.99\varrho^n \gamma\,
    \mathbb E [f_0] + \frac{1.88 \gamma^2 p}{m} + 
    3.26\big(x_n + 0.96\sqrt{\gamma p}\big)^2\Big) 
    \bigg\}^{1/2}\\
    &\leqslant \varrho^n x_0  + + 0.233 \kappa \eta 
    \sqrt{\gamma p} + \frac{1.54 M \eta}{\sqrt{m}}
    \sqrt{\varrho^n\mathbb E[f_0]} + 0.62\sqrt{\kappa}
    \,\eta x_n + 0.74 M_\gamma \eta\Big(\frac{1.88
    \gamma^3 p}{m^2} + \frac{3\gamma^2 p}{m}\Big)^{1/2}\\
    &\leqslant \varrho^n x_0  + 0.62\sqrt{\kappa}
    \,\eta x_n + \frac{1.54 M \eta}{\sqrt{m}} 
    \sqrt{\varrho^n\mathbb E[f_0]} + \frac{ 
    M\eta \sqrt{\gamma p} }{m}\big(0.233 + 0.74\sqrt{1.94} \Big).
\end{align}
Therefore, under the condition $\sqrt{\kappa}\,\eta
\leqslant 0.1$,
\begin{align}
    x_n &\leqslant 1.07\varrho^n x_0 + \frac{1.65 M 
    \eta}{\sqrt{m}}\sqrt{\varrho^n\mathbb E[f_0]} + 
    \frac{1.35 M\eta \sqrt{\gamma p} }{m}.
\end{align}
Finally, one can check that $2x_n^2\geqslant \gamma^2 
\|\bvartheta_n - \bar\bvartheta_n\|_{\Ltwo}^2 \geqslant 
\gamma^2 \wass_2^2(\nu_n^{\textup{\sf KLMC}},\pi)$ and 
$x_0 = \gamma \|\bvartheta_0 - \bL_0\|_{\Ltwo}= 
\gamma\wass_2 (\nu_0,\pi)$. This completes the proof of 
the theorem. 

\subsection{Proof of \Cref{lem:sum2} (discounted sums 
of squared gradients and velocities)}

To ease the notation, we set  $z_n := \E[\bv_n^{\top} 
\bg_n]$ and define
\begin{align}
    S_n(z) &:= \sum_{k=0}^{n}\varrho^{n-k} z_k,
    &S_n(g^2)&:= \sum_{k=0}^n\varrho^{n-k} 
    \norm{\bg_k}^2_{\Ltwo}, \\    
    S_n(f) &:= \sum_{k=0}^{n}\varrho^{n-k}
        \mathbb E[f_{k}],
    &S_n(v^2)& := \sum_{k=0}^{n}\varrho^{n-k}
        \|\bv_{k}\|_{\Ltwo}^2\,.
\end{align}

Throughout the proof, we will need some technical 
results that will be stated as lemmas and their proof 
will be postponed to \Cref{sec:B-tech-lemmas}.
\begin{lemma}
\label{lem:inprod}
    If for some $M\geqslant 0$, the gradient $\nabla f$ 
    is $M$-Lipschitz continuous, then for every 
    step-size $h>0$ and every $\gamma>0$ it holds for 
    the KLMC iterates defined in~\eqref{eq:klmc} that 
    \begin{align}
    \big|z_{n+1} - (1-\alpha \eta) z_n + \alpha \eta
    \|\bg_n\|_{\Ltwo}^2\big|
    \leqslant  
    \eta M_\gamma\big(\|\bv_n\|_{\Ltwo}^2 &+ \tfrac58
    \eta^2\|\bg_n\|^2_{\Ltwo} + \tfrac43\eta \gamma p 
     - \tilde\alpha \eta  z_n\big)
    \end{align}
    for some positive number $\tilde\alpha\eta
    \leqslant 0.14$.
\end{lemma}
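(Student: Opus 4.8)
The plan is to reduce the quantity inside the absolute value to the single cross-correlation term $\E[\bv_{n+1}^{\top}(\bg_{n+1}-\bg_n)]$ and then to control it using only the $M$-Lipschitz continuity of $\nabla f$ together with the explicit second moments of the KLMC update. The first move is an exact identity: starting from the velocity recursion~\eqref{vn1} and using that $\bxi_n$ is centered and independent of $(\bvartheta_n,\bv_n)$, hence of $\bg_n$, one gets $\E[\bv_{n+1}^{\top}\bg_n]=(1-\alpha\eta)z_n-\alpha\eta\|\bg_n\|_{\Ltwo}^2$. Subtracting this from $z_{n+1}=\E[\bv_{n+1}^{\top}\bg_{n+1}]$ yields
\begin{equation*}
z_{n+1}-(1-\alpha\eta)z_n+\alpha\eta\|\bg_n\|_{\Ltwo}^2=\E\big[\bv_{n+1}^{\top}(\bg_{n+1}-\bg_n)\big],
\end{equation*}
so it is enough to bound the absolute value of the right-hand side.

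Since $\|\bg_{n+1}-\bg_n\|\leqslant M\|\bvartheta_{n+1}-\bvartheta_n\|$ pointwise, the Cauchy--Schwarz inequality in $\Ltwo$ gives $|\E[\bv_{n+1}^{\top}(\bg_{n+1}-\bg_n)]|\leqslant M\,\|\bv_{n+1}\|_{\Ltwo}\,\|\bvartheta_{n+1}-\bvartheta_n\|_{\Ltwo}$; note that no joint structure of $\bxi_n,\bar\bxi_n$ is needed here. I would then compute the two norms directly from~\eqref{vn1}--\eqref{eq:klmc}: because $\bxi_n$ and $\bar\bxi_n$ are centered and independent of $(\bvartheta_n,\bv_n)$, all Gaussian--gradient and Gaussian--velocity cross-terms vanish in expectation, leaving $\|\bv_{n+1}\|_{\Ltwo}^2=A$ and $\|\bvartheta_{n+1}-\bvartheta_n\|_{\Ltwo}^2=\gamma^{-2}\eta^2C$, with
\begin{equation*}
A=(1-\alpha\eta)^2\|\bv_n\|_{\Ltwo}^2-2\alpha\eta(1-\alpha\eta)z_n+\alpha^2\eta^2\|\bg_n\|_{\Ltwo}^2+2\sigma^2\gamma\eta p,
\end{equation*}
and $C=\alpha^2\|\bv_n\|_{\Ltwo}^2-2\alpha\beta\eta z_n+\beta^2\eta^2\|\bg_n\|_{\Ltwo}^2+2\tilde\sigma^2\gamma\eta p$. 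Consequently $M\|\bv_{n+1}\|_{\Ltwo}\|\bvartheta_{n+1}-\bvartheta_n\|_{\Ltwo}=M_\gamma\eta\sqrt{AC}\leqslant\tfrac12 M_\gamma\eta(A+C)$ by the arithmetic--geometric mean inequality.

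It then remains to bound $\tfrac12(A+C)$ by $\|\bv_n\|_{\Ltwo}^2+\tfrac58\eta^2\|\bg_n\|_{\Ltwo}^2+\tfrac43\gamma\eta p-\tilde\alpha\eta z_n$. I would take $\tilde\alpha:=\alpha\big((1-\alpha\eta)+\beta\big)=\alpha(e^{-\eta}+\beta)$, so that the $z_n$-term of $\tfrac12(A+C)$ equals $-\tilde\alpha\eta z_n$ exactly; then, using only the ranges $\alpha\in(0,1)$, $1-\alpha\eta=e^{-\eta}\in(0,1)$, $\beta\in(0,1/2)$, $\sigma^2\in(0,1)$, $\tilde\sigma^2\in(0,1/3)$ recorded just after~\eqref{eq:klmc}, one checks $\tfrac12[(1-\alpha\eta)^2+\alpha^2]\leqslant1$, $\tfrac12[\alpha^2+\beta^2]\leqslant\tfrac58$ and $\sigma^2+\tilde\sigma^2\leqslant\tfrac43$, which produces exactly the stated inequality. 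Finally $\tilde\alpha\eta=(1-e^{-\eta})(e^{-\eta}+\beta)$, and in the range of interest ($\eta\leqslant1/10$) the elementary estimates $1-e^{-\eta}\leqslant\eta$ and $e^{-\eta}+\beta\leqslant\tfrac32-\tfrac76\eta+\tfrac{13}{24}\eta^2$ give $\tilde\alpha\eta\leqslant0.14$. Every step here is elementary; the only point that demands real care is this last numerical step — pinning down the constant $0.14$ for $\tilde\alpha\eta$, together with the routine verification of the three coefficient inequalities — which relies on keeping the small-$\eta$ Taylor expansions of $\alpha,\beta,\sigma^2,\tilde\sigma^2$ sufficiently tight, everything else being bookkeeping of variances.
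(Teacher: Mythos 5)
Your proposal is correct and takes essentially the same route as the paper: reduce the quantity to $\E[\bv_{n+1}^\top(\bg_{n+1}-\bg_n)]$ via the exact identity $\E[\bg_n^\top(\bv_{n+1}-\bv_n)]=-\alpha\eta z_n-\alpha\eta\|\bg_n\|_{\Ltwo}^2$, then apply Lipschitz continuity and Cauchy--Schwarz to get $M\|\bv_{n+1}\|_{\Ltwo}\|\bvartheta_{n+1}-\bvartheta_n\|_{\Ltwo}$, pass to $\tfrac12 M_\gamma\eta(A+C)$ by AM--GM with $A=\|\bv_{n+1}\|_{\Ltwo}^2$ and $C=(\gamma/\eta)^2\|\bvartheta_{n+1}-\bvartheta_n\|_{\Ltwo}^2$, and finally bound the coefficients using $\alpha,e^{-\eta}\leqslant 1$, $\beta\leqslant 1/2$, $\sigma^2\leqslant 1$, $\tilde\sigma^2\leqslant 1/3$, identifying $\tilde\alpha=\alpha(e^{-\eta}+\beta)$. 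Your write-up is in fact slightly cleaner than the paper's: you carry exact second-moment identities for $A$ and $C$ and only relax at the very end, whereas the paper relaxes the $\|\bv_{n+1}\|^2$ and $\gamma^2\|\bvartheta_{n+1}-\bvartheta_n\|^2$ bounds term by term (and its stated intermediate bound contains a power-of-$\eta$ typo in the $z_n$ coefficient that is self-corrected one line later). Your final numerical check for $\tilde\alpha\eta\leqslant 0.14$ is also made more explicit, and you correctly flag that it implicitly relies on $\eta\leqslant 0.1$, an assumption the paper also uses (it comes from the theorem hypothesis $\sqrt{\kappa}\,\gamma h\leqslant 0.1$) but does not restate in the lemma.
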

Since $M_\gamma\leqslant 1/5$ and $\eta\leqslant 0.1$, 
we have
\begin{align}
    \alpha -\tfrac58 M_\gamma \eta^2 \geqslant 
    \tfrac1\eta(1 - e^{-\eta}) -\tfrac18 \eta^2
    \geqslant 10(1- e^{-0.1}) - \tfrac18 0.1^2 
    \geqslant  0.94.
\end{align}
Therefore, we can rewrite the claim of \Cref{lem:inprod} 
with the notation $\tilde\beta = \eta M_\gamma 
\tilde\alpha$ as follows:
\begin{align}
    z_{n+1}
    \leqslant (1-\alpha \eta - \tilde \beta\eta) 
    z_n + 0.2\eta\|\bv_n\|_{\Ltwo}^2 + 0.67\gamma 
    p\eta^2  -  0.94 \eta \| \bg_n\|^2_{\Ltwo}.
\end{align}
\begin{lemma}\label{lem:klmc3}
    Let $\tilde\beta\leqslant 0.014$ and $\eta
    \in[0,0.1]$. If $z_0 = 0$ and the sequences 
    $\{z_n\}\subset \mathbb R$, $\{\bv_n\}\subset 
    \mathbb R^p$ and $\{\bg_n\}\subset\mathbb R^p$ 
    satisfy the inequality 
    \begin{align}
        z_{n+1}
        \leqslant (e^{-\eta} - \tilde\beta\eta) 
        z_n + 0.2\eta\|\bv_n\|_{\Ltwo}^2 + 0.67
        \gamma p\eta^2  -  0.94 \eta \| \bg_n
        \|^2_{\Ltwo}\label{eq:Zn}
    \end{align}
    then for every $\varrho\in [0,1]$ such that 
    $\varrho \geqslant e^{-\eta}$, it holds that
    \begin{align}
        S_n( g^2)  \leqslant 1.09\alpha (S_n(z))_- 
        + 0.213 S_n(v^2) + \frac{0.73 \eta\gamma 
        p}{1-\varrho} - \frac{1.07 z_{n+1}}{\eta},
        \label{eq:sngrad-snz}
    \end{align}
    where $(S_n(z))_- = \max(0,S_n(z))$ is the 
    negative part of $S_n(z)$ and $\alpha = (
    1-e^{-\eta})/\eta$. 
\end{lemma}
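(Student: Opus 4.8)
The plan is to apply the discounted-sum operator $S_n(\cdot)$ to the one-step recursion \eqref{eq:Zn} and then use the summation-by-parts identity of \Cref{lem:rho-sum} in place of a crude telescoping. Multiplying \eqref{eq:Zn} by $\varrho^{n-k}$ and summing over $k=0,\dots,n$, the left-hand side becomes $\sum_{k=0}^{n}\varrho^{n-k}z_{k+1}=S_n^{+1}(z)$, which by \Cref{lem:rho-sum} together with $z_0=0$ equals $z_{n+1}+\varrho\,S_n(z)$. On the right-hand side, linearity of $S_n$ and the elementary bound $\sum_{k=0}^{n}\varrho^{n-k}=(1-\varrho^{n+1})/(1-\varrho)\leqslant 1/(1-\varrho)$ give
\begin{align*}
z_{n+1}+\varrho\,S_n(z)\leqslant (e^{-\eta}-\tilde\beta\eta)\,S_n(z)+0.2\eta\,S_n(v^2)+\frac{0.67\gamma p\eta^2}{1-\varrho}-0.94\eta\,S_n(g^2).
\end{align*}

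Next I would isolate $S_n(g^2)$, obtaining
\begin{align*}
0.94\eta\,S_n(g^2)\leqslant (e^{-\eta}-\tilde\beta\eta-\varrho)\,S_n(z)+0.2\eta\,S_n(v^2)+\frac{0.67\gamma p\eta^2}{1-\varrho}-z_{n+1}.
\end{align*}
The key observation is that the assumption $\varrho\geqslant e^{-\eta}$ forces the coefficient $c:=e^{-\eta}-\tilde\beta\eta-\varrho$ of $S_n(z)$ to be $\leqslant -\tilde\beta\eta<0$. Hence $c\,S_n(z)$ contributes nothing when $S_n(z)\geqslant 0$, and when $S_n(z)<0$ it is at most $|c|\,(S_n(z))_-$ with $|c|=\varrho-e^{-\eta}+\tilde\beta\eta\leqslant (1-e^{-\eta})+\tilde\beta\eta=(\alpha+\tilde\beta)\eta$, where $\alpha=(1-e^{-\eta})/\eta$. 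In all cases, $c\,S_n(z)\leqslant (\alpha+\tilde\beta)\eta\,(S_n(z))_-$.

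Finally, dividing through by $0.94\eta$ and invoking the numerical constraints $\eta\leqslant 0.1$ (so that $\alpha\geqslant 0.95$) and $\tilde\beta\leqslant 0.014$ to bound each resulting constant — concretely $(\alpha+\tilde\beta)/0.94\leqslant 1.09\alpha$, $0.2/0.94<0.213$, $0.67/0.94<0.73$, and $1/0.94<1.07$ — yields precisely \eqref{eq:sngrad-snz}. The only genuinely delicate point is the constant bookkeeping around the $S_n(z)$ term: one must check that letting $\varrho$ approach $1$ is the worst case and produces exactly the factor $\alpha$ (rather than something larger) in front of $(S_n(z))_-$, and that $(\alpha+\tilde\beta)/0.94\leqslant 1.09\alpha$ holds uniformly on $\eta\in(0,0.1]$; everything else is routine arithmetic built on \Cref{lem:rho-sum}.
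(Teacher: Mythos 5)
Your proof is correct and follows essentially the same route as the paper's: apply the discounted sum $S_n(\cdot)$ to the recursion, invoke the summation-by-parts identity of Lemma~\ref{lem:rho-sum} with $z_0=0$, bound the geometric sum of the constant term, isolate $S_n(g^2)$, and control the $S_n(z)$ coefficient $e^{-\eta}-\tilde\beta\eta-\varrho$ by $-(1-e^{-\eta})-\tilde\beta\eta$ using $\varrho\leqslant 1$. The only cosmetic difference is in how the constant in front of $(S_n(z))_-$ is packaged (the paper bounds $\varrho-e^{-\eta}+\tilde\beta\eta\leqslant 1.02\alpha\eta$ directly, you bound it by $(\alpha+\tilde\beta)\eta$ and then fold $\tilde\beta$ in), and both correctly land within $1.09\alpha$; note also that $(S_n(z))_-$ in the statement is a misprint for the negative part $\max(0,-S_n(z))$, which is the interpretation you (and the paper) implicitly use.
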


In order to get rid of the last term in 
\eqref{eq:sngrad-snz}, we need a bound on 
$(S_n(z))_-$.  To this end, we use the smoothness of 
the function $f$, in conjunction with \eqref{eq:klmc}, 
to infer that 
\begin{align}
    2\gamma\mathbb E[f_{n+1} -f_n] 
    &\leqslant 2\gamma \mathbb E[\bg_n^\top(
    \bvartheta_{n+1} - \bvartheta_n)]+ M_\gamma \|
    \gamma(\bvartheta_{n+1} - \bvartheta_n)\|_{
    \Ltwo}^2 \\
    &\leqslant 2\alpha  \eta (1 -  \beta M_\gamma  
    \eta^2) z_n - {\beta \eta^2}(2 -\beta M_\gamma 
    \eta^2) \| \bg_n\|^2_{\Ltwo} + {M_\gamma\alpha^2 
    \eta^2} \|\bv_n\|^2_{\Ltwo} + \tfrac{2}{3}{ 
    M_\gamma \eta^3 \gamma p}\\
     &\leqslant 2\alpha_0\eta z_n  - 0.96\eta^2 \| 
     \bg_n\|^2_{\Ltwo} + 0.0182\eta \|\bv_n\|^2_{\Ltwo} 
     + \tfrac{2}{15}{\eta^3 \gamma p}
\end{align}
with $\alpha_0 = \alpha(1-\beta M_\gamma\eta^2) 
\geqslant \alpha (1- 0.1\times 0.1^2) \geqslant 0.999
\alpha$. 
\begin{lemma}\label{lem:klmc4}
    Let $\alpha_0,\gamma,\eta > 0$. If the sequences 
    $\{F_n\}\subset \mathbb 
    R$, $\{\bg_n \}\subset \mathbb R^p$ and $\{\bv_n\} 
    \subset\mathbb R^p$ satisfy $F_n\geqslant 0$ and 
    \begin{align}\label{eq:Fn2}
    2(F_{n+1} -F_n) \leqslant 2\alpha_0\eta z_n
     + 0.0182\eta \|
     \bv_n\|^2_{\Ltwo} + \tfrac{2}{15}{\eta^3 \gamma p}
    \end{align}
    then, for every $\varrho \in (0,1)$, it holds that
    \begin{align}
        \alpha_0 (\eta S_n(z))_-
        \leqslant \varrho^n F_0 + (1-\varrho)
        \gamma S_n(F) + 0.0182\eta S_n(v^2) +  \frac{2
        \eta^3 \gamma p}{15(1 - \varrho)}.
    \end{align}
\end{lemma}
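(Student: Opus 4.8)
The plan is to convert the one-step descent-type estimate \eqref{eq:Fn2} into a bound on the $\varrho$-discounted sum $S_n(z)$, collapsing the telescoping part with the summation-by-parts identity of \Cref{lem:rho-sum}. First I would rewrite \eqref{eq:Fn2}, stated at index $k$, so that $-\alpha_0\eta z_k$ is isolated:
\[
  -2\alpha_0\eta z_k \;\leqslant\; 2(F_k - F_{k+1}) + 0.0182\,\eta\,\|\bv_k\|_{\Ltwo}^2 + \tfrac{2}{15}\eta^3\gamma p .
\]
Multiplying through by $\varrho^{n-k}\geqslant 0$ and summing over $k=0,\dots,n$ yields
\[
  -2\alpha_0\eta\, S_n(z) \;\leqslant\; 2\sum_{k=0}^n \varrho^{n-k}\bigl(F_k - F_{k+1}\bigr) + 0.0182\,\eta\, S_n(v^2) + \tfrac{2}{15}\eta^3\gamma p\sum_{k=0}^n \varrho^{n-k}.
\]

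Next I would evaluate the two elementary sums on the right. The geometric sum satisfies $\sum_{k=0}^n\varrho^{n-k} = (1-\varrho^{n+1})/(1-\varrho)\leqslant 1/(1-\varrho)$. For the telescoping sum, put $S_n^{+1}(F):=\sum_{k=0}^n\varrho^{n-k}F_{k+1}$; \Cref{lem:rho-sum} gives $S_n^{+1}(F) = F_{n+1} - \varrho^{n+1}F_0 + \varrho\, S_n(F)$, hence
\[
  \sum_{k=0}^n\varrho^{n-k}\bigl(F_k - F_{k+1}\bigr) = S_n(F) - S_n^{+1}(F) = (1-\varrho)S_n(F) - F_{n+1} + \varrho^{n+1}F_0 \;\leqslant\; (1-\varrho)S_n(F) + \varrho^{n} F_0,
\]
where the last step discards the nonnegative term $F_{n+1}$ and uses $\varrho^{n+1}\leqslant\varrho^{n}$, valid since $\varrho\in(0,1)$. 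Substituting both bounds and dividing by $2$ produces
\[
  -\alpha_0\eta\, S_n(z) \;\leqslant\; \varrho^{n} F_0 + (1-\varrho)S_n(F) + 0.0091\,\eta\, S_n(v^2) + \frac{\eta^3\gamma p}{15(1-\varrho)},
\]
whose right-hand side is no larger than that of the claimed inequality after the harmless relaxations $1\leqslant 2$, $0.0091\leqslant 0.0182$ and $S_n(F)\leqslant\gamma S_n(F)$.

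Finally, recalling that $(a)_-$ denotes the negative part $\max\{0,-a\}$, I would conclude by a case split. If $\eta S_n(z)\geqslant 0$, then $(\eta S_n(z))_- = 0$ and the claim is immediate, since its right-hand side is nonnegative: $F_0\geqslant 0$, $S_n(F)=\sum_{k=0}^n\varrho^{n-k}F_k\geqslant 0$ because each $F_k\geqslant 0$, and the remaining two terms are manifestly nonnegative. If $\eta S_n(z)<0$, then $\alpha_0(\eta S_n(z))_- = -\alpha_0\eta S_n(z)$, and the displayed bound above is exactly the asserted estimate. I do not anticipate any genuine obstacle here; the only points that require care are performing the summation by parts in the direction that correctly weakens the inequality (dropping $F_{n+1}$ and replacing $\varrho^{n+1}$ by $\varrho^{n}$) and respecting the sign convention for $(\cdot)_-$.
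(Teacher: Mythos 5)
Your argument is essentially the paper's own: isolate $-2\alpha_0\eta z_k$ from \eqref{eq:Fn2}, multiply by $\varrho^{n-k}$ and sum, handle the telescope via Lemma~\ref{lem:rho-sum}, drop the nonnegative $F_{n+1}$, bound the geometric sum by $1/(1-\varrho)$, and conclude using nonnegativity of the right-hand side; you even carry slightly tighter constants ($0.0091$ instead of $0.0182$, and $\eta^3\gamma p/(15(1-\varrho))$ instead of twice that) before relaxing. The one place to be careful is your patch $S_n(F)\leqslant\gamma S_n(F)$: this requires $\gamma\geqslant 1$, which is not among the lemma's hypotheses ($\gamma>0$ only). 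In fact the paper's own proof concludes with $(1-\varrho)S_n(F)$ rather than $(1-\varrho)\gamma S_n(F)$, and the subsequent application in \eqref{eq:-snz} also uses the $\gamma$-free version, so the $\gamma$ in the stated inequality is almost certainly a typo; the correct move is to flag it as such rather than absorb it with an inequality that needs an unstated hypothesis.
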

In view of the strong convexity of the potential 
function and the assumption that $f(\btheta_*) = 0$, 
the Polyak-Lojasiewicz inequality
\begin{equation}
    f_n \leqslant \tfrac{1}{2m} \norm{\bg_n}^2
\end{equation}
holds true. This implies that $(1-\varrho)S_n(f) 
\leqslant (\eta/\gamma) m S_n(f) \leqslant 
\frac12(\eta/\gamma) S_n(g^2)$. Combining this
inequality with the claim of \Cref{lem:klmc4}, 
applied to $F_n = \gamma \mathbb E[f_n]$, we get
\begin{align}\label{eq:-snz}
    \boxed{0.999\alpha (S_n(z))_-
    \leqslant \frac{\varrho^n \gamma}{\eta} 
    \mathbb E[f_0] + 0.5 S_n(g^2) + 0.0182 
    S_n(v^2) +  \frac{0.14\eta\gamma^2 p}{m}.}
\end{align}
Let us now combine \eqref{eq:sngrad-snz} and  
\eqref{eq:-snz}:
\begin{align}
    S_n( g^2) &\leqslant 
    \frac{1.1\varrho^n \gamma}{\eta} \mathbb E[f_0] 
    + 0.55 S_n(g^2) + 0.02 S_n(v^2) +  
    \frac{0.16\eta\gamma^2 p}{m}\\
    &\qquad+ 0.213 S_n(v^2) +
    \frac{0.73 \gamma^2 p}{m} + \frac{1.07|z_{n+1}|
    }{\eta} \\    
    &\leqslant \frac{1.1\varrho^n \gamma}{\eta} 
    \mathbb E[f_0] + 0.55 S_n(g^2) + 0.223 
    S_n(v^2) +  \frac{0.75\gamma^2 p}{m} + 
    \frac{1.07|z_{n+1}|}{\eta}.
\end{align}
Subtracting $0.55 S_n(g^2)$ from both sides and
dividing by $0.45$, we obtain 
\begin{equation}\label{eq:sng-snv}
    \boxed{S_n( g^2) \leqslant 
    \frac{2.45\varrho^n\gamma \mathbb E[f_0]}{
    \eta} + 0.5 S_n(v^2)  + \frac{1.7\gamma^2 p}{m}
    +\frac{2.38|z_{n+1}|}{\eta}.
 }
\end{equation}
Let us now derive a bound for $S_n(v^2)$. 
We start with the following property, which is 
a direct consequence of the definition of 
$\bv_{n+1}$:
\begin{align}
    \|\bv_{n+1}\|_{\Ltwo}^2 - \|\bv_n\|_{\Ltwo}^2 
    &\leqslant -\alpha\eta (2-\alpha \eta) \|\bv_n
    \|^2_{\Ltwo} -2\alpha \eta(1-\alpha \eta) z_n  
    + \alpha^2 \eta^2\|\bg_n\|_{\Ltwo}^2 + 2\eta 
    \gamma p.
\end{align}
Using the same technique as before and applying 
\Cref{lem:rho-sum}, we deduce the following:
\begin{align}
   (\varrho -1) S_n( v^2) - \varrho^n \norm{\bv_0}_{\Ltwo}^2
   & = (\varrho -1) S_n( v^2) - \varrho^n \gamma p\\
   &\leqslant -\alpha\eta (2-\alpha \eta)S_n(v^2) 
    -2\alpha \eta(1-\alpha \eta) S_n(z) 
    + \alpha^2 \eta^2S_n(g^2) + \frac{2\eta\gamma p
    }{1-\varrho}.
\end{align}
Therefore, since $\varrho\geqslant 1-\tfrac{m}{\gamma}
\eta$, 
\begin{align}
    (2\alpha -\alpha^2\eta  - \tfrac{m}{\gamma})\eta 
    S_n( v^2) \leqslant  - 2\alpha \eta(1-\alpha \eta) 
    S_n(z) + (\alpha \eta)^2 S_n(g^2) + \frac{2.021
    \gamma^2 p}{m}.
\end{align}
Since $\alpha = (1-e^{-\eta})/\eta$ with $\eta\leqslant 
0.1$, from the last display, we infer that 
\begin{align}
    S_n( v^2) 
    &\leqslant 1.02 \alpha\big(S_n(z )\big)_-
    + 0.51\eta S_n(g^2) + \frac{1.13\gamma^2 p}{m\eta}.
\end{align}
Combining this inequality with \eqref{eq:-snz}, implies
\begin{align}
    S_n( v^2) 
    &\leqslant 
    \frac{1.03 \varrho^n\gamma }{\eta} \mathbb [f_0]
        + 0.562 S_n (g^2)
        + 0.019 S_n(v^2) 
        + \frac{0.2\eta\gamma^2 p}{ m}
    + 0.51\eta S_n(g^2) + \frac{1.13\gamma^2 p}{m\eta}\\
    &\leqslant \frac{1.03 \varrho^n\gamma }{\eta} f_0 
        + 0.62 S_n (g^2) + 0.019 S_n(v^2) 
        +\frac{1.13\gamma^2 p}{m\eta}
\end{align}
Therefore, subtracting $ 0.019 S_n(v^2)$ and dividing by
$(1-0.019)$, we get
\begin{align}
    \boxed{S_n( v^2) 
    \leqslant 
    \frac{1.1 \varrho^n \gamma }{\eta} f_0 
        + 0.64 S_n(g^2) + \frac{1.16\gamma^2 p}{m\eta}}
        \label{eq:snv2}
\end{align}
Combining \eqref{eq:sng-snv} and \eqref{eq:snv2}, we 
arrive at
\begin{align}
    S_n(v^2) &\leqslant 
    \frac{1.1 \varrho^n \gamma}{\eta}\mathbb E[f_0]
    + \frac{1.16\gamma^2 p}{m\eta} + 0.64  \Big(\frac{2.45
    \varrho^n \gamma}{\eta}\mathbb E[f_0] +  0.5 S_n(v^2)  
    + \frac{1.7\gamma^2 p}{m} + \frac{2.38|z_{n+1}|}{\eta}
    \Big)\\
    &\leqslant \frac{ 2.67\varrho^n \gamma}{\eta}  
    f_0 + \frac{1.27\gamma^2 p}{m\eta} + \frac{1.53 
    |z_{n+1}| }{\eta}  + 0.32 S_n(v^2).
\end{align}
Therefore, subtracting $0.32S_n(v^2)$ and dividing by 
$(1-0.32)$, we get
\begin{align}\label{eq:snv22}
    \boxed{S_n( v^2) 
    \leqslant 
    \frac{ 3.93\varrho^n\gamma }{\eta}  f_0 
    + \frac{1.87\gamma^2 p}{m\eta}
     + \frac{2.25 |z_{n+1}| }{\eta}.}
\end{align}
Once again, combining with \eqref{eq:sng-snv}, we get
\begin{align}
    S_n( g^2) &\leqslant \frac{2.45\varrho^n
    \gamma }{\eta}\mathbb E[f_0] + 0.5 \Big( \frac{ 
    3.93\varrho^n \gamma}{\eta} f_0 + \frac{1.87
    \gamma^2 p}{m\eta} + \frac{2.25 |z_{n+1}| }{\eta}
    \Big)  + \frac{1.7 \gamma^2 p}{m} + \frac{2.38 
    |z_{n+1}|}{\eta}
\end{align}
that leads to
\begin{align}
    \boxed{
    S_n(g^2) \leqslant \frac{4.42\varrho^n \gamma}{
    \eta}\,\mathbb E[f_0] + \frac{1.11 \gamma^2 p}{
    m\eta} + \frac{3.51|z_{n+1}|}{\eta} .}
    \label{eq:Sng22}
\end{align}
The last lemma we need is the one providing an upper
bound on $|z_{n+1}|$. 
\begin{lemma}\label{lem:zn}
    For every $\eta \leqslant 0.1$ and $\gamma
    \geqslant {5M}$, we have
    \begin{align}
        |z_{n+1}| \leqslant 
        \big(1.19 x_n + 1.14\sqrt{\gamma p}\big)^2,
    \end{align}
    where $x_n$ is given by \eqref{eq:xn}.
\end{lemma}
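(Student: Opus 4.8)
The plan is to compare $(\bvartheta_{n+1},\bv_{n+1})$ with the reference kinetic Langevin diffusion at time $(n+1)h$. Since, in the proof of \Cref{thm:klmc1}, this diffusion is started from the invariant law $p_*$ and $p_*$ is stationary for \eqref{eq:klmc_new}, the pair $(\bL_{(n+1)h},\bV_{(n+1)h})$ is itself distributed according to $p_*$; in particular $\bL_{(n+1)h}\sim\pi$ and $\bV_{(n+1)h}\sim\mathcal N_p(0,\gamma\bfI_p)$ are \emph{independent}. I would set $\bw_1:=\bv_{n+1}-\bV_{(n+1)h}$, $\bw_2:=\bvartheta_{n+1}-\bL_{(n+1)h}$, and $\bw_3:=\nabla f(\bvartheta_{n+1})-\nabla f(\bL_{(n+1)h})$, so that $\|\bw_3\|_{\Ltwo}\leqslant M\|\bw_2\|_{\Ltwo}$ by Assumption~\ref{asm:A-scgl}. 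Expanding $z_{n+1}=\mathbb E\big[(\bV_{(n+1)h}+\bw_1)^\top(\nabla f(\bL_{(n+1)h})+\bw_3)\big]$ into four terms, the term $\mathbb E[\bV_{(n+1)h}^\top\nabla f(\bL_{(n+1)h})]$ vanishes because $\bV_{(n+1)h}$ is centered and independent of $\bL_{(n+1)h}$, and the remaining three are handled by Cauchy--Schwarz: $|z_{n+1}|\leqslant M\|\bw_1\|_{\Ltwo}\|\bw_2\|_{\Ltwo}+\|\bw_1\|_{\Ltwo}\|\nabla f(\bL_{(n+1)h})\|_{\Ltwo}+M\|\bV_{(n+1)h}\|_{\Ltwo}\|\bw_2\|_{\Ltwo}$.

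Next I would substitute three elementary facts: $\|\bV_{(n+1)h}\|_{\Ltwo}^2=\gamma p$; the integration-by-parts identity $\mathbb E_\pi[\|\nabla f\|^2]=\mathbb E_\pi[\mathrm{tr}\,\nabla^2 f]\leqslant Mp$, so that $\|\nabla f(\bL_{(n+1)h})\|_{\Ltwo}\leqslant\sqrt{Mp}=\sqrt{M_\gamma}\,\sqrt{\gamma p}$; and, directly from \eqref{eq:xn} and the structure of $\bfC$ (namely $x_{n+1}^2=\|\bw_1\|_{\Ltwo}^2+\|\bw_1+\gamma\bw_2\|_{\Ltwo}^2$), the bounds $\|\bw_1\|_{\Ltwo}\leqslant x_{n+1}$ and $\gamma\|\bw_2\|_{\Ltwo}\leqslant\sqrt2\,x_{n+1}$. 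Using $\gamma\geqslant 5M$, i.e. $M_\gamma\leqslant 1/5$, this yields $|z_{n+1}|\leqslant\sqrt2\,M_\gamma\,x_{n+1}^2+(\sqrt{M_\gamma}+\sqrt2\,M_\gamma)\sqrt{\gamma p}\,x_{n+1}\leqslant 0.29\,x_{n+1}^2+0.73\sqrt{\gamma p}\,x_{n+1}$.

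It then remains to replace $x_{n+1}$ by $x_n$, for which I would reuse the one-step estimate already established in the proof of \Cref{thm:klmc1}, i.e. the combination of \eqref{eq:iterKLMC} with \Cref{lem:klmc} recorded in \eqref{eq:rem1}: $x_{n+1}\leqslant\varrho x_n+M_\gamma\eta^2\big(0.23\sqrt{\gamma p}+0.74\|\bv_n\|_{\Ltwo}+0.25\eta\|\bg_n\|_{\Ltwo}\big)$. Bounding $\|\bv_n\|_{\Ltwo}\leqslant x_n+\sqrt{\gamma p}$ and $\|\bg_n\|_{\Ltwo}\leqslant\sqrt2\,M_\gamma x_n+\sqrt{M_\gamma}\,\sqrt{\gamma p}$ by the same triangle-inequality argument as above (peeling off $\bV_{nh}$, resp. $\nabla f(\bL_{nh})$, and using $\|\bv_n-\bV_{nh}\|_{\Ltwo}\leqslant x_n$ and $\gamma\|\bvartheta_n-\bL_{nh}\|_{\Ltwo}\leqslant\sqrt2\,x_n$), and then using $\eta\leqslant 0.1$, $M_\gamma\leqslant 1/5$, $\varrho\leqslant 1$ to absorb the tiny $M_\gamma\eta^2$-corrections, gives $x_{n+1}\leqslant 1.01\,x_n+0.01\sqrt{\gamma p}$. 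Plugging this into the bound of the previous paragraph and completing the square (take $a^2=0.29$, then $2ab\geqslant 0.73$) produces $|z_{n+1}|\leqslant\big(ax_n+b\sqrt{\gamma p}\big)^2$ with $a\leqslant 0.55$ and $b\leqslant 0.69$, comfortably below the claimed $1.19$ and $1.14$.

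The only genuinely delicate point is the vanishing of the cross term $\mathbb E[\bV_{(n+1)h}^\top\nabla f(\bL_{(n+1)h})]$: it uses in an essential way that the reference diffusion is initialized \emph{at} the invariant measure $p_*$, so that at every time the position and the velocity are independent and the velocity is centered --- precisely the coupling used throughout this section. Everything else is routine bookkeeping with Cauchy--Schwarz, the smoothness of $f$, the stationary moment identities, and the two elementary inequalities relating $\|\bw_1\|_{\Ltwo}$ and $\gamma\|\bw_2\|_{\Ltwo}$ to $x_{n+1}$; the only mild effort is keeping the constants below $1.19$ and $1.14$, which the computation above shows leaves a wide margin.
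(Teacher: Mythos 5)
Your argument is correct, and it follows a genuinely different route from the paper's. The paper starts from the one-step $z$-recursion in \Cref{lem:inprod}, uses $|z_n|\leqslant \|\bg_n\|_{\Ltwo}\|\bv_n\|_{\Ltwo}$ (Cauchy–Schwarz) together with $\eta\leqslant 0.1$, $M_\gamma\leqslant 1/5$ to get $|z_{n+1}|\leqslant 0.56(\|\bg_n\|_{\Ltwo}+\|\bv_n\|_{\Ltwo})^2+0.0027\gamma p$, and then ``centers'' $\bg_n$ and $\bv_n$ against $\nabla f(\bL_{nh})$ and $\bV_{nh}$ (using the same stationary moment identities $\|\nabla f(\bL_{nh})\|_{\Ltwo}\leqslant\sqrt{Mp}$, $\|\bV_{nh}\|_{\Ltwo}=\sqrt{\gamma p}$), and finally $0.5\gamma\|\bvartheta_n-\bL_{nh}\|_{\Ltwo}+\|\bv_n-\bV_{nh}\|_{\Ltwo}\leqslant\sqrt{5/2}\,x_n$. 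You instead expand $z_{n+1}$ directly against the stationary reference pair at time $(n+1)h$, kill the leading term $\mathbb E[\bV_{(n+1)h}^\top\nabla f(\bL_{(n+1)h})]$ by independence and centeredness under $p_*$ (a structural observation the paper never invokes explicitly), apply Cauchy–Schwarz to the three remaining terms, and then trade $x_{n+1}$ for $x_n$ using the already established one-step estimate \eqref{eq:rem1}; there is no circularity since \eqref{eq:rem1} is derived from \Cref{lem:klmc} alone, independently of $\Cref{lem:sum2}$. What the paper's route buys is that it stays self-contained inside the $z$-recursion machinery already built for \Cref{lem:sum2}; what your route buys is a cleaner conceptual picture (the product $\bv^\top\nabla f(\bL)$ vanishes at stationarity) and somewhat sharper constants ($\approx 0.55$ and $0.69$ instead of $1.19$ and $1.14$). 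One small remark: the paper's reference process in the proof of \Cref{thm:klmc1} is indeed initialized at $p_*$, as your argument requires, so the independence/centering step is legitimate.
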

Using \Cref{lem:zn} in conjunction with \eqref{eq:snv2}
and \eqref{eq:Sng22}, we arrive at the inequalities 
stated in \Cref{lem:sum2}.

\subsection{Proof of Lemma~\ref{lem:klmc} 
(one-step discretization error)}

We use the notation
\begin{align}
    \psi_0(t) = e^{-\gamma t},\qquad
    \psi_1(t) = \frac{1 - e^{-\gamma t}}{\gamma},\qquad   
    \psi_2(t) = \frac{e^{-\gamma t} -1 + \gamma t}{\gamma}
\end{align}
and note that
\begin{align}
    \psi_1(t)\leqslant t,\qquad \psi_2(t) \leqslant 0.5
    \gamma t^2. 
\end{align}
Furthermore, 
 \begin{align}
\|\bv_{n+1} - \bV'_h\|_{\Ltwo} 
    &= \gamma \bigg\|\int_0^h e^{-\gamma(h-s)} \big(
    \nabla f (\bL'_s) - \nabla f(\bvartheta_n)\big)\,
    \rmd s \bigg\|_{\Ltwo}\\
    &\leqslant  \gamma\int_0^h e^{-\gamma (h-s)}\big\| 
    \nabla f(\bL'_s) - \nabla f(\bvartheta_n)\big\|_{
    \Ltwo}\,\rmd s\\
    &\leqslant  M\gamma\int_0^h \big\|\bL'_s - 
    \bvartheta_n \big\|_{\Ltwo}\,\rmd s,
\label{lem21:1}
\end{align}
where the last implication is due to the $M$-smoothness 
of the potential function $f$. On the one hand, for every 
$s\in[0,h]$, we have
\begin{align}
\bL'_s - \bvartheta_n 
    &=\int_0^s \bV'_u \,\rmd u\\
    &=\psi_1(s)\,\bv_n - \gamma
    \int_0^s\int_0^u e^{-\gamma(u-t)} 
    \big(\nabla f(\bL'_t)-\nabla f(\bvartheta_n)\big)\,
    \rmd t\rmd u - \psi_2(s)\, \bg_n\\ 
    &\qquad + \sqrt{2}\,\gamma \int_0^s\int_0^u e^{- 
    \gamma (u-t)} \,\rmd\bW_t\,\rmd u\\
    &=\psi_1(s)\,\bv_n  - \psi_2(s)\, \bg_n + \sqrt{2}
    \,\gamma\int_0^s\psi_1(t)\,\rmd\bW_t\\
    &\qquad - \gamma
    \int_0^s \psi_1(s-t) 
    \big(\nabla f(\bL'_t)-\nabla f(\bvartheta_n)\big)\,
    \rmd t. 
\label{lem21:3}
\end{align}
Therefore,
\begin{align}
    \big\|\bL'_s - \bvartheta_n\big\|_{\Ltwo} 
    &\leqslant s\,\|\bv_n\|_{\Ltwo} + 0.5\gamma s^2
    \, \|\bg_n\|_{\Ltwo} + \sqrt{2p s/3}\,\gamma s 
    + M\gamma \int_0^s (s-t) \big\|\bL'_t -
    \bvartheta_n\big\|_{\Ltwo}\,\rmd t .
\label{lem21:4}
\end{align}
The last inequality combined with $s-t\leqslant 
h-t$ allows us to use the Gr\"onwall lemma, which 
implies that
\begin{align}
\big\|\bL'_s - \bvartheta_n\big\|_{\Ltwo} 
    &\leqslant  \big(s\,\|\bv_n\|_{\Ltwo} + 0.5
    \gamma s^2\, \|\bg_n\|_{\Ltwo} + \sqrt{(2/3)p s}
    \,\gamma s\big) e^{M\gamma s(h-0.5s)}\\
    &\leqslant  \big(s\|\bv_n\|_{\Ltwo}  + 0.5
    \gamma s^2\, \|\bg_n\|_{\Ltwo} + \sqrt{(2/3)p s}
    \,\gamma s\big) e^{0.5M\gamma h^2}.
\label{lem21:5}
\end{align}
Combining the last display with \eqref{lem21:1}, 
we get
\begin{align}
\big\|\bv_{n+1} - \bV'_h\big\|_{\Ltwo} 
    &\leqslant  \big\{\tfrac12 \|\bv_n\|_{\Ltwo} +
    \tfrac{1}{6}\eta  \|\bg_n\|_{\Ltwo} + 0.33 
    \sqrt{\gamma p  \eta}\big\} Mh^2\gamma e^{ 
    M\gamma h^2/2}\\
    &\leqslant  \big\{\tfrac12
    \|\bv_n\|_{\Ltwo}  +\tfrac{1}{6}\eta  \|\bg_n
    \|_{\Ltwo} + 0.33\sqrt{\gamma p  \eta}\big\} 
    M_\gamma \eta^2 e^{M_\gamma \eta^2/2}.
\label{lem21:6}
\end{align}
This completes the proof of the first inequality. 
To prove the second one, we again use the update 
rules of $\btheta_{n+1}$ and $\bL^\prime$:
\begin{align}
\big\|\bvartheta_{n+1} - \bL'_h\big\|_{\Ltwo} 
    &= \gamma\bigg\|\int_0^h \int_0^t e^{-\gamma^2
    (t-s)}\big(\nabla f(\bL'_s) - \nabla f
    (\bvartheta_n)\big)\,\rmd s \,\rmd t 
    \bigg\|_{\Ltwo}\\
    &\leqslant \gamma \int_0^h\int_0^t \big\|
    \nabla f (\bL'_s) - \nabla f(\bvartheta_n)
    \big\|_{\Ltwo}\,\rmd s \,\rmd t \\
    &\leqslant  M\gamma \int_0^h\int_0^t \big\|
    \bL'_s - \bvartheta_n\big\|_{\Ltwo}\,\rmd s 
    \,\rmd t.
\end{align}
The last term can be bounded using \eqref{lem21:5}. 
This yields
\begin{align}
\gamma\big\|\bvartheta_{n+1} - \bL'_h\big\|_{\Ltwo} 
    & \leqslant   M_\gamma \gamma^3 e^{M_\gamma
    \eta^2/2} \int_0^h\int_0^t \big(s\| \bv_n 
    \|_{\Ltwo} + 0.5\gamma s^2\|\bg_n\|_{\Ltwo}+ 
    \sqrt{(2/3)ps^3}\,\gamma \big)\,\rmd s\,\rmd t\\
    & \leqslant  M_\gamma \eta^3 e^{M_\gamma\eta^2/2} 
    \big(\tfrac16\|\bv_n\|_{\Ltwo} + \tfrac1{24}\eta
    \|\bg_n\|_{\Ltwo} + 0.1\sqrt{p\eta\gamma}  \big)\\
    &\leqslant (1/6) M_\gamma \eta^3 e^{M_\gamma 
    \eta^2/2} \big(\|\bv_n\|_{\Ltwo} + 0.25\eta\|
    \bg_n\|_{\Ltwo} + 0.6\sqrt{p\eta\gamma} \big)
\end{align}
as desired.

\subsection{Proofs of the technical lemmas used in 
\Cref{lem:sum2}}
\label{sec:B-tech-lemmas}

\subsubsection{Proof of \Cref{lem:inprod}}

Since $z_n = \mathbb E[\bg_n^\top \bv_n]$, we have
\begin{align}
    \big|z_{n+1} - 
    z_n  - \mathbb E[
    \bg_n^\top (\bv_{n+1} -\bv_n)]\big| = \big|
    \mathbb E\big[(\bg_{n+1} - \bg_n)^\top \bv_{n+1} 
    ]\big|.\label{eq:lem5-eq0}
\end{align}
On the one hand, definition \eqref{vn1} of 
$\bv_{n+1}$  yields 
\begin{equation}\label{eq:lemma5-eq1}
     \mathbb E[\bg_n^\top (\bv_{n+1} -\bv_n)]  =
    -\alpha\eta \mathbb E[\bg_n^\top \bv_n]-\alpha 
    \eta \|\bg_n\|_{\Ltwo}^2.    
\end{equation}
On the other hand, the Cauchy-Schwartz inequality 
implies
\begin{align}
    \big|\mathbb E\big[(\bg_{n+1} - \bg_n)^\top 
    \bv_{n+1} ]\big| &\leqslant \big\|\bg_{n+1} -
    \bg_n\big\|_{\Ltwo}\|\bv_{n+1}\|_{\Ltwo} \\
    & \leqslant M\|\bvartheta_{n+1} - \bvartheta_n 
    \|_{\Ltwo}\|\bv_{n+1}\|_{\Ltwo} . 
\end{align}
Similarly, using update rules \eqref{vn1} and 
\eqref{eq:klmc} of the KLMC, and the triangle 
inequality we get 
\begin{align}
    \gamma^2\|\bvartheta_{n+1} - \bvartheta_n
    \|^2_{\Ltwo}
    &\leqslant  \eta^2\|\bv_n\|_{\Ltwo}^2 + 
    \tfrac14\eta^4 \|\bg_n\|_{\Ltwo}^2 -2\alpha\beta
    \eta^2 z_n + \tfrac23\eta^3\gamma p\label{eq:lem5-eq2}\\
    \|\bv_{n+1}\|^2_{\Ltwo}&\leqslant 
    \|\bv_n\|_{\Ltwo}^2 + \eta^2 \|\bg_n\|_{\Ltwo}^2 
    - 2\alpha\eta(1-\alpha\eta) z_n + 
    2\eta \gamma p.\label{eq:lem5-eq3}
\end{align}
Hence,
\begin{align}
    \tfrac\gamma\eta\|\bvartheta_{n+1} - \bvartheta_n
    \|_{\Ltwo} \|\bv_{n+1}\|_{\Ltwo}
    &\leqslant  \frac{(\gamma/\eta)^2\|\bvartheta_{n+1} 
    - \bvartheta_n \|_{\Ltwo}^2 + 
    \|\bv_{n+1}\|_{\Ltwo}^2}{2} \\
    &\leqslant \|\bv_n\|_{\Ltwo}^2 + \tfrac58
    \eta^2 \|\bg_n\|_{\Ltwo}^2 - \alpha\eta(\beta + 
    1-\alpha\eta) z_n + \tfrac43\eta \gamma p.
    \label{eq:lemma5-eq3}
\end{align}
Therefore, combining \eqref{eq:lem5-eq0}, 
\eqref{eq:lemma5-eq1} and \eqref{eq:lemma5-eq3}, we get
\begin{align}
    \big|z_{n+1} - (1-\alpha \eta) z_n + \alpha \eta
    \|\bg_n\|_{\Ltwo}^2\big|
    \leqslant  
    \eta M_\gamma\big(\|\bv_n\|_{\Ltwo}^2 &+ \tfrac58
    \eta^2\|\bg_n\|^2_{\Ltwo} + \tfrac43\eta \gamma p 
    \big)\\
    & - \eta^2M_\gamma\underbrace{\alpha (\beta 
    + 1-\alpha \eta)}_{:=\tilde\alpha} z_n
\end{align}
with $\tilde\alpha\eta \leqslant \alpha\eta(1.5 - 
\alpha \eta)\leqslant 0.14$, as desired.

\subsubsection{Proof of \Cref{lem:klmc3}}
    We apply inequality \eqref{eq:Zn} for every index 
    $k \leqslant n$ multiply each side by 
    $\varrho^{n-k}$:
    \begin{align}
        \varrho^{n-k} z_{k+1}
        \leqslant (e^{-\eta} - \tilde\beta\eta)
        \varrho^{n-k}z_k +
        0.2  \varrho^{n-k}\eta\|\bv_k\|_{\Ltwo}^2 +
        0.67\eta^2\gamma p  \varrho^{n-k} - 0.94 
        \eta\varrho^{n-k} \| \bg_k\|^2_{\Ltwo}.
    \end{align}
    Summing over $k$, applying \Cref{lem:rho-sum} 
    and taking into account that $z_0 = 0$, we get 
    \begin{align}
        z_{n+1} + \varrho S_n(z) 
        & \leqslant (e^{-\eta} - \tilde\beta\eta) 
        S_n(z) + 0.2  \eta  S_n(v^2) + \frac{0.67
        \eta^2\gamma p  }{1-\varrho}
        - 0.94 \eta S_n(g^2)\\
        &\leqslant (e^{-\eta} - \tilde\beta\eta) 
        S_n(z) + 0.2  \eta  S_n(v^2) + \frac{0.68 
        \eta^2 \gamma p}{1-\varrho} - 0.94 \eta S_n(g^2).
    \end{align}
    This implies that
    \begin{align}
        0.94 \eta S_n( g^2)  
        &\leqslant (\varrho - e^{-\eta} + \tilde\beta
        \eta )(- S_n(z)) + 0.2  \eta  S_n(v^2) + 
        \frac{0.68 \eta^2 \gamma p}{1-\varrho} - z_{n+1}.
    \end{align}
    Note that $\rho - e^{-\eta} \geqslant 0$ and 
    $\varrho - e^{-\eta}  + \tilde\beta\eta
    \leqslant 1 - e^{-\eta} + 0.014\eta\leqslant 
    1.02(e^{-\eta} - 1) = 1.02 \alpha \eta$. 
    Therefore,
    \begin{align}
        0.94 \eta S_n( g^2)  
        &\leqslant 1.02\alpha \eta (S_n(z))_- + 0.2\eta  
        S_n(v^2) + \frac{0.68 \eta^2 \gamma p}{1-\varrho} 
        - z_{n+1}.
    \end{align}
    Dividing both sides of the last display by $0.94\eta$, 
    we get
    \begin{align}
        S_n( g^2)  
        \leqslant 1.09\alpha (S_n(z))_- + 0.213 S_n(v^2) 
        + \frac{0.73  \eta\gamma p}{1-\varrho} - 
        \frac{1.07 z_{n+1}}{\eta}.
    \end{align}
    This completes the proof of the lemma.

\subsubsection{Proof of \Cref{lem:klmc4}}

    We write inequality \eqref{eq:Fn2} for all indices 
    $k$ and multiply both sides of it by $\varrho^{n-k}$. 
    Summing the obtained inequalities and applying 
    \Cref{lem:rho-sum}, we obtain the following:
    \begin{equation}\label{eq:sn_f}
        \begin{aligned}
        2(\varrho - 1) S_n(F) - 2\varrho^n 
        F_0 \leqslant 2\alpha_0 \eta S_n(z)
        + 0.0.182\eta S_n(v^2) 
        + \frac{2\eta^3 \gamma p}{15(1 - \varrho)},
        \end{aligned}
    \end{equation}
    where the left-hand side is obtained using 
    \Cref{lem:rho-sum} and the fact that $F_n 
    \geqslant 0$. Rearranging the terms and dividing
    by 2, we obtain
    \begin{align}
        -\alpha_0 \eta S_n(z)
        \leqslant \varrho^n F_0 + (1-\varrho)
        S_n(F) + 0.0.182\eta S_n(v^2) +  \frac{2\eta^3 
        \gamma p}{15(1 - \varrho)}.
    \end{align}
    Since the right-hand side of the last display is 
    nonnegative, we infer that
    \begin{align}
        \alpha_0 (\eta S_n(z))_-
        \leqslant \varrho^n  F_0 + (1-\varrho)
        S_n(F) + 0.0.182\eta S_n(v^2) +  \frac{2\eta^3 
        \gamma p}{15(1 - \varrho)},
    \end{align}
    which coincides with the claim of the lemma.

\subsubsection{Proof of \Cref{lem:zn}}
    
    In view of \Cref{lem:inprod}, we have
    \begin{align}
        |z_{n+1}| 
        &\leqslant  (e^{-\eta}+0.3\eta^2) |z_n| + \eta 
        \|\bg_n\|_{\Ltwo}^2 + \eta \big(0.2 \|\bv_n 
        \|_{\Ltwo}^2 + 0.2\eta^2\| \bg_n\|^2_{\Ltwo} 
        + 0.027 \eta \gamma p\big)\\
        &\leqslant 0.56 (\|\bg_n\|_{\Ltwo} +\|\bv_n
        \|_{\Ltwo})^2  + 0.0027\gamma p\\
        &\leqslant 0.56\big(\|\bg_n - \nabla f(
        \bL_{nh})\|_{\Ltwo} + \|\bv_n - \bV_{nh}
        \|_{\Ltwo} + \sqrt{Mp} + \sqrt{\gamma p}\big)^2 
        + 0.0027 \gamma p,
    \end{align}
    where we have used the facts $\|\nabla f(\bL_{nh})
    \|_{\Ltwo} = \int \|\nabla f\|^2\,d\pi\leqslant Mp$
    \citep[Lemma 3]{dalalyan2019user} and $\mathbb E
    [\|\bV_{nh}\|^2] = \gamma p$. 
    Finally, one can note that
    \begin{align}
        \|\bg_n - \nabla f(\bL_{nh})\|_{\Ltwo}
        + \|\bv_n - \bV_{nh}\|_{\Ltwo} &\leqslant
        0.5\gamma \|\bvartheta_n - \bL_{nh}\|_{\Ltwo}
        + \|\bv_n - \bV_{nh}\|_{\Ltwo} \\
        &\leqslant
        0.5 \|\bv_n-\bV_{nh}+\gamma(\bvartheta_n - 
        \bL_{nh})\|_{\Ltwo} + 1.5 \|\bv_n - \bV_{nh} 
        \|_{\Ltwo} \\
        &\leqslant \sqrt{5/2}\, x_n.
    \end{align}
    Therefore, 
    \begin{align}
        |z_{n+1} |
        &\leqslant \big(
        1.19 x_n + 1.09\sqrt{\gamma p}
        \big)^2 + 0.0027 \gamma p
        \leqslant 
        \big(1.19 x_n + 1.14\sqrt{\gamma p}
        \big)^2.
    \end{align}
    This completes the proof of the lemma.

\clearpage

\section{The proof of the upper bound on the error of 
RKLMC}\label{app:proof-rklmc}

Consider the underdamped Langevin diffusion
\begin{align}
\label{eq:uld}
\rmd \bL_t= \bV_t\,\rmd t,\qquad \text{where}
\qquad
\rmd\bV_t=- \gamma\bV_t\,\rmd t - \gamma\nabla f(\bL_t)\,\rmd t 
+\sqrt{2}\gamma\,\rmd \bW_t
\end{align}
for every $t\geqslant 0$, with  given initial
conditions $\bL_0$ and $\bV_0$. Throughout this section, 
we assume that $\bV_0\sim\mathcal N_p(0,\gamma\bfI_p)$ 
is independent of $\bL_0$, and the couple $(\bV_0,\bL_0)$
is independent of the Brownian motion $\bW$. We also
assume that $\bL_0$ is drawn from the target distribution
$\pi$; this implies that the process $(\bL_t,\bV_t)$ is
stationary.

In the sequel, we use the following shorthand notation 
\begin{align}
    \eta = \gamma h,\quad g = \nabla f ,\qquad 
    f_n = f(\bvartheta_n),\qquad \bg_n = g(\bvartheta_n),
    \qquad \bg_{n+U} = g(\bvartheta_{n+U}),\qquad M_\gamma 
    = M/\gamma.
\end{align}

The randomized midpoint discretization---proposed
and studied in \citep{shen2019randomized}---of the 
kinetic Langevin process~\eqref{eq:exactsol}, can be
written as 
\begin{align}
\bvartheta_{n+U}
&=\bvartheta_n+\frac{1-e^{-U\eta}}{\gamma}\bv_n
-\int_0^{U h}(1-e^{-\gamma (U h-s)})\,\rmd s\,\nabla f_n
+\sqrt2\,\int_0^{U h} 
(1-e^{-\gamma (U h -s)})\,d\bar{\bW}_s\\
\bvartheta_{n+1}
&=\bvartheta_n+\frac{1-e^{-\eta}}{\gamma}\bv_n
-  \eta\frac{1-e^{-\eta(1 - U)}}{\gamma}\nabla f_{n+U}
+\sqrt{2}\int_0^h (1-e^{-\gamma (h-s)})\,\rmd\bar{\bW}_s\\
\bv_{n+1}
&=\bv_n e^{-\eta}- \eta e^{-\gamma (h - U h)}
\nabla f_{n+U} + \sqrt{2}\,\gamma  
\int_0^h e^{-\gamma (h-s)}\,\rmd \bar{\bW}_s 
\label{eq:vn+1-1}
\end{align}
where $\bar{\bW}_s = \bW_{nh+s} - \bW_{nh}$. 
We rewrite these relations in the 
shorter form
\begin{align}
    \bvartheta_{n+U} &= \bvartheta_n + \gamma^{-1}\eta\big( U\bar\alpha_1
    \bv_n -  U^2\eta\bar\beta_1\,\bg_n
    +U\sqrt{2U\gamma\eta}\,\bar\sigma_1\bxi_1\big)
    \label{eq:thetaU}\\
    \bvartheta_{n+1} &= \bvartheta_n + \gamma^{-1}\eta\big(\bar\alpha_2 
    \bv_n - \eta\bar\beta_2\,\bg_{n+U} + \sqrt{2\gamma\eta}\,\bar\sigma_2 \bxi_2\big)
    \label{eq:thetan+1}\\
    \bv_{n+1} & = \bv_n -\eta \bar\alpha_2 \bv_n 
    - 2\eta\bar\beta_3 \bg_{n+U} + \sqrt{2\gamma\eta}
    \,\bar\sigma_3\bxi_3\label{eq:vn+1}
\end{align}
where $\bar\alpha_1$, $\bar\beta_1$, $\bar\beta_2$, $\bar\beta_3$ and $\bar\sigma_1$ are positive random variables
(with randomness inherited from $U$ only)
satisfying
\begin{align}
    \bar\alpha_1&\leqslant 1, \qquad
    \bar\beta_1 \leqslant 1/2,\qquad
    \bar\beta_2 \leqslant 1-U\leqslant 1,\qquad
    \bar\beta_3 \leqslant 1/2,\qquad
    \bar\sigma_1^2 \leqslant 1/3
\end{align}
and $\mathbb E[\bar\beta_2] \in[0.468, 0.5]$. 
Similarly, $\bar\alpha_2$, $\bar\sigma_2$ and $\bar 
\sigma_3$ are positive real numbers depending on 
$\gamma$ and $h$ such that 
\begin{align}
    \bar\alpha_2&\leqslant 1, \qquad
    \bar\sigma_2^2 \leqslant 1/3,\qquad
    \bar\sigma_3^2 \leqslant 1.
\end{align}
We define
\begin{align}
    \bar\bv_{n+1}:= \mathbb E_{U}[\bv_{n+1}],\qquad
    \bar\bvartheta_{n+1} := \mathbb E_{U}[\bvartheta_{n+1}]. 
\end{align}

The solution to SDE~\eqref{eq:uld} starting from 
$(\bv_n,\bvartheta_n)$ at the $n$-th iteration at 
time $h$ admits the following integral formulation
\begin{align}
    \bL'_t &=\bvartheta_n
    +\int_0^t\bV'_s\rmd s\\
    \bV'_t &=\bv_ne^{-\gamma t}
    -\gamma \int_0^t e^{-\gamma(t-s)}\nabla f(\bL'_s)\,\rmd s
    +\sqrt{2}\,\gamma\int_0^t e^{-\gamma(t-s)}\rmd\bW_{nh+s}\,.
    \label{eq:exactsol}
\end{align}
These expressions will be used in the proofs provided 
in the present section. Furthermore, without loss of 
generality, we assume that the $f(\btheta_*)= \min_{
\btheta\in\mathbb R^p} f(\btheta) = 0$.  

\subsection{Some preliminary results}

We start with some technical results required to prove~\Cref{thm:rklmc}. They mainly 
assess the discretisation error as well
as discounted sums of squared gradients and
velocities.

\begin{lemma}[Precision of the mid-point]
\label{lem:9}
For every  $h>0$, it holds that
\begin{align}
\|\bvartheta_{n+U}-\bL'_{U h}\|_{\Ltwo} 
\leqslant  \gamma^{-1}M_\gamma \eta^3e^{M_\gamma\eta^2/2}\Big(0.065 \eta \|\bg_n\|_{\Ltwo} + (1/6)\|\bv_n\|_{\Ltwo} 
+ \sqrt{\eta\gamma p /54}\Big)\,.
\end{align}
\end{lemma}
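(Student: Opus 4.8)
The plan is to exploit the fact that the midpoint iterate $\bvartheta_{n+U}$ is exactly the continuous flow $\bL'$ started from $(\bvartheta_n,\bv_n)$ and run for time $Uh$, but with the drift gradient frozen at $\bvartheta_n$ on the whole interval $[0,Uh]$, and driven by the same Brownian increments. First I would put $\bL'_{Uh}$ into the same integral form as $\bvartheta_{n+U}$: starting from the expression for $\bV'$ in~\eqref{eq:exactsol}, integrating it over $[0,Uh]$, and applying Fubini to the drift double integral and stochastic Fubini to $\sqrt2\,\gamma\int_0^{Uh}\!\int_0^s e^{-\gamma(s-r)}\,\rmd\bW_{nh+r}\,\rmd s=\sqrt2\int_0^{Uh}(1-e^{-\gamma(Uh-r)})\,\rmd\bW_{nh+r}$, one finds that $\bL'_{Uh}$ and $\bvartheta_{n+U}$ share the same $\bv_n$-term $\tfrac1\gamma(1-e^{-U\eta})\bv_n$ and the same stochastic integral, whence
\begin{align}
    \bvartheta_{n+U}-\bL'_{Uh}=\int_0^{Uh}\bigl(1-e^{-\gamma(Uh-r)}\bigr)\bigl(\nabla f(\bL'_r)-\bg_n\bigr)\,\rmd r .
\end{align}
By $M$-Lipschitzness of $\nabla f$ together with $1-e^{-x}\leqslant x$, this yields the pathwise bound $\|\bvartheta_{n+U}-\bL'_{Uh}\|_2\leqslant M\gamma\int_0^{Uh}(Uh-r)\,\|\bL'_r-\bvartheta_n\|_2\,\rmd r$.

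Next I would insert the one-step displacement estimate for the diffusion. Since~\eqref{eq:uld} is the same SDE as~\eqref{eq:klmc_new} and $\bL'$ starts from $(\bvartheta_n,\bv_n)$, inequality~\eqref{lem21:5} applies verbatim here (alternatively it is re-derived by writing $\bL'_r-\bvartheta_n=\int_0^r\bV'_s\,\rmd s$, bounding the noise term by the It\^o isometry, and closing the resulting integral inequality with Gr\"onwall's lemma, whose kernel is $\leqslant M\gamma(h-t)$):
\begin{align}
    \|\bL'_r-\bvartheta_n\|_{\Ltwo}\leqslant e^{M_\gamma\eta^2/2}\Bigl(r\|\bv_n\|_{\Ltwo}+\tfrac12\gamma r^2\|\bg_n\|_{\Ltwo}+\gamma\sqrt{2p/3}\,r^{3/2}\Bigr),\qquad r\in[0,h].
\end{align}
This bound does not involve $U$, so I would condition on $U=u$, use Minkowski's integral inequality to move $\|\cdot\|_{\Ltwo}$ inside the $\rmd r$-integral, and evaluate the elementary integrals $\int_0^{T}(T-r)r\,\rmd r=T^3/6$, $\int_0^{T}(T-r)r^2\,\rmd r=T^4/12$, and $\int_0^{T}(T-r)r^{3/2}\,\rmd r=4T^{7/2}/35$ with $T=uh$. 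Substituting $h=\eta/\gamma$ and $M\gamma=M_\gamma\gamma^2$ turns every coefficient into a multiple of $\gamma^{-1}M_\gamma\eta^3$, producing a conditional-$\Ltwo$ bound of the shape $\gamma^{-1}M_\gamma\eta^3e^{M_\gamma\eta^2/2}\bigl(\tfrac{u^4}{24}\eta\|\bg_n\|_{\Ltwo}+\tfrac{u^3}{6}\|\bv_n\|_{\Ltwo}+\tfrac{4}{35}\sqrt{2/3}\,u^{7/2}\sqrt{\eta\gamma p}\bigr)$.

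Finally I would average over $U\sim\mathrm{Unif}[0,1]$: since $\|\bvartheta_{n+U}-\bL'_{Uh}\|_{\Ltwo}^2$ equals $\E_U$ of the squared conditional $\Ltwo$-norm, one further application of Minkowski's inequality in $L^2([0,1])$ — bounding powers of $U$ by $1$ where convenient and by $(\E\,U^{2k})^{1/2}$ otherwise — delivers the claimed inequality; the numbers $0.065$, $1/6$, $\sqrt{1/54}$ in the statement are deliberately generous upper bounds for the constants obtained this way. I expect the only delicate points — and therefore the main obstacle — to be the stochastic-Fubini identity that makes the Brownian parts cancel exactly, and the bookkeeping of the variable $U$, which appears simultaneously in the limits of integration and inside the integrand: conditioning on $U$ first (so that the $U$-free estimate~\eqref{lem21:5} can be invoked) and averaging over $U$ only afterwards is what keeps the argument clean. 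Everything else is routine calculus.
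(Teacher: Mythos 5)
Your plan reproduces the paper's argument: the paper proves Lemma~\ref{lem:9} by exactly this drift-difference identity $\bvartheta_{n+U}-\bL'_{Uh}=\int_0^{Uh}\bigl(1-e^{-\gamma(Uh-r)}\bigr)\bigl(\nabla f(\bL'_r)-\bg_n\bigr)\,\rmd r$, followed by $M$-Lipschitzness and $1-e^{-x}\leqslant x$, then the one-step displacement bound from~\eqref{lem21:5} (obtained by Gr\"onwall), and finally averaging over $U$. The only difference is cosmetic bookkeeping: you evaluate $\int_0^{uh}(uh-r)r^k\,\rmd r$ exactly and take $\mathbb E_U$ at the very end, whereas the paper substitutes $s=Uht$, applies Cauchy--Schwarz to $\int_0^1(1-t)\,\cdot\,\rmd t$, and bounds $\mathbb E[U^4]\leqslant\mathbb E[U^3]$; your version in fact delivers slightly smaller numerical constants, comfortably within the stated $0.065$, $1/6$, $\sqrt{1/54}$.
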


\begin{lemma}[Discretization error]
\label{lem:2}
Let $(\bL'_t,\bV'_t)$ be the exact solution of the 
kinetic Langevin diffusion starting from $(\bvartheta_n,\bv_n)$. If $\gamma \geqslant M$ 
and $h>0$, it holds that
\begin{align}
    \gamma\|\bar\bvartheta_{n+1} - \bL'_h\|_{\Ltwo} 
    &\leqslant \frac{M_\gamma^2 \eta^5e^{M_\gamma\eta^2/2}}{\sqrt{3}}
    \Big(0.065 \eta \|\bg_n\|_{\Ltwo} 
    + (1/6) \|\bv_n\|_{\Ltwo} + \sqrt{\eta\gamma p/54}
    \Big)\\
    \gamma\|\bvartheta_{n+1}-\bar\bvartheta_{n+1}\|_{
    \Ltwo}& \leqslant  M_\gamma \eta^3\big(
    0.26\|\bv_n\|_{\Ltwo}  + 0.106\sqrt{\eta\gamma p }
    \big) + \frac{\eta^2}{\sqrt3}\big(0.12{M_\gamma\eta^2}+1\big)\|\bg_n\|_{\Ltwo}\\
    \|\bar \bv_{n+1} -\bV'_h\|_{\Ltwo}
    & \leqslant M_\gamma^2\eta^4e^{M_\gamma\eta^2/2}\Big(0.065 \eta 
    \|\bg_n\|_{\Ltwo} + (1/6)\|\bv_n\|_{\Ltwo}  + \sqrt{\eta\gamma p /54}\Big) \\
    \|\bv_{n+1}-\bar\bv_{n+1}\|_{\Ltwo}
    & \leqslant M_\gamma \eta^2\big(0.82\|\bv_n\|_{\Ltwo} + 
    0.41\sqrt{\eta\gamma p }\big) + \frac{\eta^2}{\sqrt3 }\big(0.55 M_\gamma \eta + 1)\big\|\bg_n\big\|_{\Ltwo}.
\end{align}
\end{lemma}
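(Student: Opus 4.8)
The whole argument rests on the synchronous coupling between the RKLMC step and the exact diffusion $(\bL',\bV')$ started from the current iterate, together with a careful separation of the randomness carried by the midpoint variable $U$. I would establish the four inequalities in the order: first the two ``bias'' bounds (for $\bar\bvartheta_{n+1}-\bL'_h$ and $\bar\bv_{n+1}-\bV'_h$), then the two ``fluctuation'' bounds.

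First I would put $\bvartheta_{n+1},\bv_{n+1}$ in their integral form and write $\bL'_h,\bV'_h$ through \eqref{eq:exactsol}; a stochastic Fubini identity, $\sqrt2\,\gamma\int_0^h\!\int_0^t e^{-\gamma(t-s)}\,\rmd\bW_{nh+s}\,\rmd t=\sqrt2\int_0^h(1-e^{-\gamma(h-s)})\,\rmd\bar\bW_s$ and its analogue without the outer integral, shows that the driving noise of $\bvartheta_{n+1}$, resp. $\bv_{n+1}$, coincides with that of $\bL'_h$, resp. $\bV'_h$. Hence $\bvartheta_{n+1}-\bL'_h$ and $\bv_{n+1}-\bV'_h$ involve gradient terms only, and after averaging over $U$ (with the change of variable $s=Uh$, which is exactly where the randomized midpoint construction pays off) one gets the identities $\bar\bvartheta_{n+1}-\bL'_h=\mathbb E_U\big[\tfrac{\eta}{\gamma}(1-e^{-\eta(1-U)})\big(\nabla f(\bL'_{Uh})-\nabla f(\bvartheta_{n+U})\big)\big]$ and $\bar\bv_{n+1}-\bV'_h=\mathbb E_U\big[\eta\, e^{-\eta(1-U)}\big(\nabla f(\bL'_{Uh})-\nabla f(\bvartheta_{n+U})\big)\big]$. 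Applying the Cauchy--Schwarz inequality in $U$ pulls out $\big(\mathbb E_U[c(U)^2]\big)^{1/2}$; using $1-e^{-\eta(1-U)}\leqslant \eta(1-U)$ gives $\mathbb E_U\big[\big(\tfrac{\eta}{\gamma}(1-e^{-\eta(1-U)})\big)^2\big]\leqslant \tfrac{\eta^4}{3\gamma^2}$ (the origin of the factor $1/\sqrt3$) and $\mathbb E_U[\eta^2 e^{-2\eta(1-U)}]\leqslant \eta^2$. Then $M$-Lipschitzness of $\nabla f$ and \Cref{lem:9} applied to $\|\bvartheta_{n+U}-\bL'_{Uh}\|_{\Ltwo}$ deliver the first and third inequalities (the prefactor $\gamma$ in the first is absorbed through $M=\gamma M_\gamma$, and $e^{M_\gamma\eta^2/2}$ is carried along unsimplified).

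For the fluctuation bounds I would note that, apart from the $U$-independent driving noise which cancels in $\bvartheta_{n+1}-\bar\bvartheta_{n+1}$, the only $U$-dependent part of $\bvartheta_{n+1}$ is $-\tfrac{\eta}{\gamma}(1-e^{-\eta(1-U)})\nabla f(\bvartheta_{n+U})$, and similarly $-\eta\,e^{-\eta(1-U)}\nabla f(\bvartheta_{n+U})$ for $\bv_{n+1}$. Splitting $\nabla f(\bvartheta_{n+U})=\nabla f(\bvartheta_n)+\big(\nabla f(\bvartheta_{n+U})-\nabla f(\bvartheta_n)\big)$, the first piece contributes, by centering (which only decreases the $L^2$-norm in $U$), at most $\big(\mathbb E_U[c(U)^2]\big)^{1/2}\|\bg_n\|_{\Ltwo}$, while the second is bounded via $\mathrm{Var}_U(X)\leqslant\mathbb E_U\|X\|^2$ and $M$-Lipschitzness by $\big(\mathbb E_U[c(U)^2]\big)^{1/2}\,M\,\|\bvartheta_{n+U}-\bvartheta_n\|_{\Ltwo}$. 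It then remains to control $\|\bvartheta_{n+U}-\bvartheta_n\|_{\Ltwo}$ from \eqref{eq:thetaU}: expanding the square, using that the midpoint noise $\bxi_1$ is centered and, conditionally on $U$, independent of $(\bvartheta_n,\bv_n,\bg_n)$, together with $\bar\alpha_1\leqslant1$, $\bar\beta_1\leqslant\tfrac12$, $\bar\sigma_1^2\leqslant\tfrac13$ and $\mathbb E[U^2]=\tfrac13,\ \mathbb E[U^3]=\tfrac14,\ \mathbb E[U^4]=\tfrac15$, one obtains $\|\bvartheta_{n+U}-\bvartheta_n\|_{\Ltwo}\leqslant \tfrac{\eta}{\gamma}\big(c_1\|\bv_n\|_{\Ltwo}+c_2\eta\|\bg_n\|_{\Ltwo}+c_3\sqrt{\gamma\eta p}\big)$ with explicit small constants $c_i$. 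Assembling the pieces and simplifying under $\gamma\geqslant M$ yields the second and fourth inequalities.

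The main obstacle is the bookkeeping in the fluctuation bounds: two independent sources of $U$-randomness are present -- the explicit scalar coefficient $c(U)$ and the implicit dependence of $\bvartheta_{n+U}$ on $U$ through both its own coefficients and the fresh Gaussian increment $\bxi_1$ -- and they must be routed through the Cauchy--Schwarz and variance arguments so that every power of $\eta$ and every numerical constant comes out as stated. By contrast, the bias bounds, which are conceptually the heart of the randomized-midpoint analysis, become short once the stochastic-Fubini noise cancellation of the first step and \Cref{lem:9} are in place.
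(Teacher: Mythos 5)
Your proposal is correct and follows the same route as the paper: the bias terms are handled by the change of variable $s=Uh$ turning the time integral into a $U$-expectation, giving $\bar\bvartheta_{n+1}-\bL'_h=\mathbb E_U\big[h(1-e^{-\eta(1-U)})\big(\nabla f(\bL'_{Uh})-\nabla f_{n+U}\big)\big]$ (and similarly for $\bv$), followed by $M$-Lipschitzness, Cauchy--Schwarz in $U$ where needed, and \Cref{lem:9}; the fluctuation terms use the $L^2$-projection property to replace $\mathbb E_U[c(U)\nabla f_{n+U}]$ by $\mathbb E_U[c(U)]\nabla f_n$, then the split $\nabla f_{n+U}=\nabla f_n+(\nabla f_{n+U}-\nabla f_n)$, and a moment bound on $\bvartheta_{n+U}-\bvartheta_n$ derived from \eqref{eq:thetaU}. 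The one caution, which you anticipate yourself, is that the displayed bound $(\mathbb E_U[c(U)^2])^{1/2}M\|\bvartheta_{n+U}-\bvartheta_n\|_{\Ltwo}$ cannot be invoked as a step since $c(U)$ and $\bvartheta_{n+U}-\bvartheta_n$ are not independent of $U$; the paper instead keeps the scalar weight inside the norm (e.g.\ $\|(1-U)(\bvartheta_{n+U}-\bvartheta_n)\|_{\Ltwo}$) and evaluates joint moments of $U$, which is what produces the precise numerical constants, but this is a bookkeeping refinement rather than a change of approach.
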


\begin{corollary}
\label{cor:rklmc}
    If $\gamma \geqslant 2M$ and $\eta\leqslant 1/5$,
    it holds that
    \setlength\tabcolsep{1pt}
    \begin{center}
         \begin{tabular}{rrrrr}    
    $\gamma\|\bar\bvartheta_{n+1} - \bL'_h\|_{\Ltwo}\leqslant$ & $ M_\gamma^2\eta^5
    \big($ & $0.038 \eta\|\bg_n\|_{\Ltwo} +$ & $0.098\|\bv_n\|_{\Ltwo} +$ & $0.084\sqrt{\eta\gamma p}\big),$\\[2pt]
    $\gamma\|\bvartheta_{n+1}-\bar\bvartheta_{n+1}\|_{
    \Ltwo}\leqslant $ & $ \eta^2\big($ & $ 0.578\|\bg_n\|_{\Ltwo}+$ & $
    0.02 \|\bv_n\|_{\Ltwo}  + $ & $ 0.005
    \sqrt{\eta\gamma p }\big),$\\[2pt]
    $\|\bar \bv_{n+1} -\bV'_h\|_{\Ltwo}
     \leqslant $ & $M_\gamma^2\eta^4\big($ & $0.066 \eta \|\bg_n\|_{\Ltwo} + $ & $0.168\|\bv_n\|_{\Ltwo}
    + $ & $ 0.137\sqrt{\eta\gamma p }\big), $\\[2pt]
    $\|\bv_{n+1}-\bar\bv_{n+1}\|_{\Ltwo}
    \leqslant$ & $ \eta^2\big($ & $0.591\big\|\bg_n\big\|_{\Ltwo} +$ & 
    $0.164 \|\bv_n\|_{\Ltwo} +$ & $ 0.082
    \sqrt{\eta\gamma p }\big).$
    \end{tabular}
    \end{center}
\end{corollary}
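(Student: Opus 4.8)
The plan is to obtain all four inequalities by a straightforward specialization of \Cref{lem:2}: nothing is needed beyond substituting the quantitative hypotheses and rounding the constants, and \Cref{lem:9} plays no role in the corollary itself (it enters only in the proof of \Cref{lem:2}). First I would collect the elementary facts that get used throughout: $\eta\leqslant 1/5$ gives $\eta^2\leqslant 1/25$; the bound on $\gamma$ relative to $M$ makes $M_\gamma=M/\gamma$ small (at most $1/2$, and at most $1/5$ in the regime of \Cref{thm:rklmc} where this corollary is applied); consequently $M_\gamma\eta^2/2$ is tiny, so the exponential prefactor $e^{M_\gamma\eta^2/2}$ appearing in every bound of \Cref{lem:2} is at most $e^{1/100}<1.011$ (indeed at most $e^{1/250}$ in the tighter regime). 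I would also record the numerical inequalities $1/\sqrt3<0.5774$ and $1/\sqrt{54}<0.1362$, the identity $\sqrt{\eta\gamma p/54}=\sqrt{\eta\gamma p}/\sqrt{54}$, and the fact that each product of the form $M_\gamma\eta$ occurring in \Cref{lem:2} is at most $1/10$ (at most $1/25$ under the hypotheses of \Cref{thm:rklmc}), while each $M_\gamma\eta^2$ is quadratically small.

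Then I would process the four bounds of \Cref{lem:2} one at a time. For $\gamma\|\bar\bvartheta_{n+1}-\bL'_h\|_{\Ltwo}$ I would pull $M_\gamma^2\eta^5$ out of the right-hand side, bound the residual coefficient $e^{M_\gamma\eta^2/2}/\sqrt3$ by $1.011/\sqrt3<0.584$, and multiply it into the three entries $0.065\,\eta\|\bg_n\|_{\Ltwo}$, $(1/6)\|\bv_n\|_{\Ltwo}$, $\sqrt{\eta\gamma p/54}$ to obtain $0.038$, $0.098$, $0.084$; the bound for $\|\bar\bv_{n+1}-\bV'_h\|_{\Ltwo}$ is handled identically with $M_\gamma^2\eta^4$ pulled out, giving $0.066$, $0.168$, $0.137$. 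For $\gamma\|\bvartheta_{n+1}-\bar\bvartheta_{n+1}\|_{\Ltwo}$ and $\|\bv_{n+1}-\bar\bv_{n+1}\|_{\Ltwo}$ the common factor is $\eta^2$: here I would rewrite each $M_\gamma\eta^3$-term as $(M_\gamma\eta)\cdot\eta^2$ and absorb the small factor $M_\gamma\eta$ into a numerical constant, while for the gradient terms I would note that $\tfrac{\eta^2}{\sqrt3}\bigl(1+cM_\gamma\eta^2\bigr)\|\bg_n\|_{\Ltwo}$ has coefficient at most $\tfrac1{\sqrt3}(1+cM_\gamma\eta^2)$, which is $<0.578$ for $c=0.12$ and $<0.591$ for $c=0.55$ since $M_\gamma\eta^2$ is negligible; collecting the gradient, velocity and noise contributions yields $0.578,0.02,0.005$ and $0.591,0.164,0.082$ respectively.

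There is no genuine obstacle here — the exercise is purely numerical — but the one point that demands care is that the constants in the statement are essentially sharp under the chosen rounding (for instance $e^{M_\gamma\eta^2/2}/6$ sits only just below $0.168$, and $0.106\,M_\gamma\eta$ only just below the claimed $0.005$), so one must use the tight estimates for $e^{M_\gamma\eta^2/2}$, $1/\sqrt3$, $1/\sqrt{54}$ and $M_\gamma\eta$, and must exploit that $M_\gamma\eta^2$ is quadratically small rather than merely bounded. With the bound $M_\gamma\leqslant 1/5$ in force, all of the roundings close with room to spare, which is the regime in which the corollary is subsequently used.
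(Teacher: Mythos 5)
Your proposal is correct and is exactly the paper's (implicit) derivation: pull the common factors out of \Cref{lem:2}, bound the exponential $e^{M_\gamma\eta^2/2}$ numerically, and round. You are also right that several of the stated constants are only attained under $M_\gamma\leqslant 1/5$: for the fourth line, $0.82\,M_\gamma$ and $0.41\,M_\gamma$ equal $0.164$ and $0.082$ precisely when $M_\gamma = 1/5$, and in the second line $0.26\,M_\gamma\eta\leqslant 0.02$ and $0.106\,M_\gamma\eta\leqslant 0.005$ both fail once $M_\gamma\eta$ is allowed up to $0.1$; so under the corollary's own hypothesis $\gamma\geqslant 2M$ the claimed numbers would be violated, and the effective hypothesis is $\gamma\geqslant 5M$, which holds whenever \Cref{thm:rklmc} invokes the corollary. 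Making that dependence explicit, as you do, is the only care point; otherwise the computation is routine.
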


\begin{proposition}\label{prop:4}
    If $\gamma^2\geqslant 5 M$ and $\eta \leqslant 
    1/5$, then, for any $n\in\mathbb N$, the iterates
    of the RKLMC satisfy
    \begin{align}
    \eta\sum_{k=0}^{n}\varrho^{n-k}
        \|\bv_{k}\|_{\Ltwo}^2 &\leqslant   18.8\varrho^{n}\gamma \mathbb E[f_0] +  
     3.92(x_n + 1.5\sqrt{\gamma p})^2 + \frac{10.6\gamma^2 p}{m},\\
     \eta\sum_{k=0}^{n}\varrho^{n-k}
        \|\bg_{k}\|_{\Ltwo}^2 &\leqslant
        21.7\varrho^{n}\gamma \mathbb E[f_0] +  
     4.88(x_n + 1.5\sqrt{\gamma p})^2 + \frac{11.2\gamma^2 p}{m},
\end{align}
where $\varrho=\exp(-mh)$ and $x_n = 
       \big(\norm{\bv_n- \bV_{nh}}^2_{\Ltwo}
       +
      \norm{\bv_n- \bV_{nh}+\gamma(  \bvartheta_n-\bL_{nh})}_{\Ltwo}^2\big)^{1/2}$.
\end{proposition}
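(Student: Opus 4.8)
The plan is to adapt, essentially line by line, the strategy used for KLMC in the proof of \Cref{lem:sum2}, the only genuinely new feature being that the updates \eqref{eq:thetaU}--\eqref{eq:vn+1} involve the gradient at the random midpoint $\bvartheta_{n+U}$ rather than at $\bvartheta_n$. First I would introduce the cross-term $z_n := \E[\bv_n^\top\bg_n]$ and, for a sequence $\omega$, the $\varrho$-discounted sum $S_n(\omega) = \sum_{k=0}^n\varrho^{n-k}\omega_k$, working with $S_n(v^2)$, $S_n(g^2)$, $S_n(z)$ and $S_n(f)$ (where $f_k$ is short for $\E[f_k]$). The goal is to derive three one-step inequalities --- for $z_{n+1}$, for $\gamma\E[f_{n+1}-f_n]$, and for $\|\bv_{n+1}\|_{\Ltwo}^2-\|\bv_n\|_{\Ltwo}^2$ --- each of the schematic shape ``a contraction in $z_n$, plus $O(\eta)$ times $\|\bv_n\|_{\Ltwo}^2$, plus $O(\eta^2)$ times $\|\bg_n\|_{\Ltwo}^2$, plus $O(\eta^2)$ times $\gamma p$''. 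To obtain them I would expand $\|\bvartheta_{n+1}-\bvartheta_n\|_{\Ltwo}^2$ and $\|\bv_{n+1}-\bv_n\|_{\Ltwo}^2$ from \eqref{eq:thetan+1}--\eqref{eq:vn+1}, in every such expansion replacing $\bg_{n+U}$ by $\bg_n+(\bg_{n+U}-\bg_n)$ and controlling the remainder by $\|\bg_{n+U}-\bg_n\|_{\Ltwo}\leqslant M\|\bvartheta_{n+U}-\bvartheta_n\|_{\Ltwo}$, which is $O(\eta)$-small thanks to \eqref{eq:thetaU} (and to \Cref{lem:9} whenever the exact midpoint $\bL'_{Uh}$ enters). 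The cross-term inequality would be obtained exactly as in \Cref{lem:inprod}, starting from $|z_{n+1}-z_n-\E[\bg_n^\top(\bv_{n+1}-\bv_n)]| = |\E[(\bg_{n+1}-\bg_n)^\top\bv_{n+1}]| \leqslant M\|\bvartheta_{n+1}-\bvartheta_n\|_{\Ltwo}\|\bv_{n+1}\|_{\Ltwo}$ and the elementary bound $2ab\leqslant a^2+b^2$; here $\E[\bg_n^\top(\bv_{n+1}-\bv_n)]$ splits, via $\bg_{n+U}=\bg_n+(\bg_{n+U}-\bg_n)$, into $-\eta\bar\alpha_2 z_n - 2\eta\E[\bar\beta_3]\|\bg_n\|_{\Ltwo}^2$ plus an $O(\eta^2)$ midpoint remainder.

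Second, I would multiply each one-step inequality by $\varrho^{n-k}$, sum over $0\leqslant k\leqslant n$, and apply the summation-by-parts identity \Cref{lem:rho-sum}, using $z_0=0$, $\|\bv_0\|_{\Ltwo}^2=\gamma p$ (since $\bv_0\sim\mathcal N_p(0,\gamma\bfI_p)$), and $F_0=\gamma\E[f_0]$. This produces a closed system of linear inequalities among $S_n(g^2)$, $S_n(v^2)$, $(S_n(z))_-$, $S_n(f)$, $\varrho^n\gamma\E[f_0]$, $\gamma^2p/m$ and $|z_{n+1}|$, with all coefficients controlled by the stated conditions on $\gamma$ and $\eta$ (I would keep $\alpha=(1-e^{-\eta})/\eta\in[0.9,1]$, $\beta\in(0,1/2)$, $M_\gamma=M/\gamma$ small, $\E[\bar\beta_2]\in[0.468,0.5]$, and $1-\varrho$ comparable to $mh$ throughout). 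The Polyak--Lojasiewicz inequality $f_n\leqslant\tfrac1{2m}\|\bg_n\|^2$ (valid because of the normalization $f(\btheta_*)=0$) turns the $S_n(f)$ contribution into a small multiple of $(\eta/\gamma)S_n(g^2)$, which is precisely what lets the system be solved: subtracting the self-referential terms and dividing --- in the style of the boxed displays \eqref{eq:sng-snv}, \eqref{eq:snv2}, \eqref{eq:snv22}, \eqref{eq:Sng22} in the KLMC proof --- yields $\eta S_n(g^2)$ and $\eta S_n(v^2)$ bounded by an affine combination of $\varrho^n\gamma\E[f_0]$, $\gamma^2p/m$, and $|z_{n+1}|$.

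Third, I would close the loop with a bound on $|z_{n+1}|$ in terms of $x_n$, the exact analogue of \Cref{lem:zn}: the cross-term inequality gives $|z_{n+1}|$ bounded by a constant times $(\|\bg_n\|_{\Ltwo}+\|\bv_n\|_{\Ltwo})^2$ plus $O(\gamma p)$, and then $\|\bg_n\|_{\Ltwo}+\|\bv_n\|_{\Ltwo}\leqslant\|\bg_n-\nabla f(\bL_{nh})\|_{\Ltwo}+\|\bv_n-\bV_{nh}\|_{\Ltwo}+\sqrt{Mp}+\sqrt{\gamma p}$, where $\|\nabla f(\bL_{nh})\|_{\Ltwo}^2\leqslant Mp$ and $\E[\|\bV_{nh}\|^2]=\gamma p$ because $(\bL_t,\bV_t)$ is stationary. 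Using $\|\bg_n-\nabla f(\bL_{nh})\|_{\Ltwo}\leqslant M\|\bvartheta_n-\bL_{nh}\|_{\Ltwo}\leqslant 0.5\,\gamma\|\bvartheta_n-\bL_{nh}\|_{\Ltwo}$ together with the definition of $x_n$ gives $\|\bg_n-\nabla f(\bL_{nh})\|_{\Ltwo}+\|\bv_n-\bV_{nh}\|_{\Ltwo}\leqslant\sqrt{5/2}\,x_n$, hence $|z_{n+1}|\leqslant(c\,x_n+c'\sqrt{\gamma p})^2$ for explicit $c,c'$. Substituting this into the bounds from the second step, and using $\varrho\geqslant 1-mh$ to tidy the constants, produces the two stated inequalities with the numerical factors $18.8$, $3.92$, $10.6$ and $21.7$, $4.88$, $11.2$.

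The step I expect to be the main obstacle is the midpoint bookkeeping: every one-step inequality acquires extra terms proportional to $M\|\bvartheta_{n+U}-\bvartheta_n\|_{\Ltwo}$, and one must verify that these are genuinely $O(\eta)$-small relative to the principal terms so that, after summation by parts, the coefficients multiplying $S_n(g^2)$ and $S_n(v^2)$ on the right-hand sides stay strictly below $1$; only then is the linear system invertible and the resulting constants finite. Making this go through under the comparatively modest hypotheses of the proposition is what forces the careful constant-by-constant computation, whereas the structural part of the argument is a faithful transcription of the KLMC case in \Cref{lem:sum2}.
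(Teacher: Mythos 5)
Your plan is correct and mirrors the paper's own proof: the cross-term $z_n$, the discounted sums, the three one-step inequalities (the paper packages them as Lemma~8), summation by parts via Lemma~\ref{lem:rho-sum}, the Polyak--Lojasiewicz step, the iterative elimination of the self-referential $S_n(g^2)$ and $S_n(v^2)$ coefficients, and closing with a bound on $|z_{n+1}|$ in terms of $x_n$ using stationarity of the continuous process. The only cosmetic deviation is that you route the $|z_{n+1}|$ bound through the KLMC analogue (Lemma~\ref{lem:zn}), whereas the paper bounds $\|\bg_n\|_{\Ltwo}$ and $\|\bv_n\|_{\Ltwo}$ directly and feeds them into the one-step $z$-recursion, but this does not change the argument.
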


\begin{proof}[Proof of \Cref{prop:4}]
We use the same shorthand notation as in the previous
proofs and assume without loss of generality that 
$\btheta_* = 0$. 
Let us define $z_k = \mathbb E[\bv_k^\top \bg_k]$, and
\begin{align}
    S_n(z) &:= \sum_{k=0}^{n}\varrho^{n-k}z_k,
    &S_n(g^2) &:= \sum_{k=0}^n\varrho^{n-k} \norm{\bg_k}^2_{\Ltwo}, \\    
    S_n(f) &:= \sum_{k=0}^{n}\varrho^{n-k}
        \mathbb E[f_{k}],
    &S_n(v^2) &:= \sum_{k=0}^{n}\varrho^{n-k}
        \|\bv_{k}\|_{\Ltwo}^2\,.
\end{align}

We will need the following lemma, the proof of
which is postponed. 
\begin{lemma}\label{lem:8}
    For any $\gamma>0$ and $h>0$ satisfying $\gamma \geqslant 5M$ and any 
    $\eta \leqslant 1/5$, the iterates of the
    randomized midpoint discretization of the kinetic 
    Langevin diffusion satisfy
    \begin{alignat}{5}
    \|\bv_{n+1}\|_{\Ltwo}^2 
        & \leqslant & (1 - 1.47\eta ) \| \bv_n \|_{\Ltwo}^2 
        - & &2\bar\alpha_2\eta  \mathbb E [\bv_n^\top \bg_{n}] +&& 2\eta^2 \|\bg_n\|_{\Ltwo}^2 +
        &\,2.12\gamma\eta  p
        \label{eq:help1}\\
    \mathbb E[\bv_{n+1}^\top\bg_{n+1}] 
        &\leqslant &0.51 \eta \|\bv_n\|_{\Ltwo}^2  +&&\, (1- \bar\alpha_2\eta )\mathbb E[\bv_{n}^\top\bg_{n}] -&& 0.97\eta
        \|\bg_n\|_{\Ltwo}^2  +  &\,0.9  \eta^2\gamma p
        \label{eq:help2}\\
    \gamma \mathbb E[f_{n+1} - f_n] 
        &\leqslant &0.28\eta^2 \|\bv_n\|^2_{\Ltwo} +& &\bar\alpha_2\eta \mathbb
        E[\bv_n^\top \bg_n] -&& \,0.46\eta^2\|\bg_n\|^2_{\Ltwo}  +&\, 
        0.09 \eta^3\gamma  p.\label{eq:help3}
    \end{alignat}
\end{lemma}
From the first inequality~\eqref{eq:help1} in 
\Cref{lem:8}, we infer that
\begin{align}
    S_{n}^{+1}(v^2) &\leqslant (1 - 1.47\eta ) S_n(v^2) 
    -2\bar\alpha_2\eta  S_n(z) + 2\eta^2 S_n(g^2) 
    + 2.12\gamma^2p/m.
\end{align}
In view of \Cref{lem:rho-sum} and the fact that 
$\|\bv_0\|_{\Ltwo}^2 = \gamma p$, this implies that
\begin{align}
    1.47\eta S_n(v^2) + 2\bar\alpha_2\eta  S_n(z) 
     &\leqslant  S_n(v^2) - S_{n}^{+1}(v^2) + 2\eta^2 
     S_n(g^2) + 2.12\gamma^2p/m \\
     &\leqslant (1-\varrho) S_n(v^2) + 2\eta^2 
     S_n(g^2) + {2.12\gamma^2p}/{m} + \gamma p .
\end{align}
Note that $1-\varrho\leqslant \frac{m\eta}{\gamma} 
\leqslant 0.02\eta$. Therefore, we obtain
\begin{align}
    (1.47 - 0.02) S_n(v^2) + 2\bar\alpha_2 
    S_n(z) \leqslant 2\eta S_n(g^2) +  
    \frac{2.14\gamma^2 p}{m\eta},
\end{align}
that is equivalent to
\begin{align}
    \boxed{S_n(v^2)  \leqslant 1.38\bar\alpha_2 S_n(z)_- 
    + 1.38\eta S_n(g^2) +  \frac{1.48\gamma^2 p}{m\eta}.}
    \label{eq:54}
\end{align}
The second step is to use the second inequality~\eqref{eq:help2} of \Cref{lem:8}.
Note that $m\eta/\gamma \leqslant 1/500$ implies $1-\varrho\geqslant0.998 m\eta/\gamma$. 
It then follows that
\begin{align}
    S_{n}^{+1} (z)
        & = (1- \bar\alpha_2\eta )
        S_n(z) - 0.97\eta  S_n(g^2) + 
        0.51 \eta S_n(v^2) + {0.9 \eta\gamma^2 p/m}.
\end{align}
This  inequality, combined with \Cref{lem:rho-sum},  yields
\begin{align}
0.97 \eta S_n(g^2)
    &\leqslant -(\bar\alpha_2\eta +\varrho -1)S_n(z)  + 0.51\eta S_n(v^2)  + |z_{n+1}|
    + 0.9\eta\gamma^2 p/m\\
    &\leqslant \bar\alpha_2\eta S_n(z)_-  + 0.51\eta S_n(v^2)  + |z_{n+1}| +
    0.9\eta\gamma^2 p/m.
\end{align}
This can be rewritten as 
\begin{align}
    \boxed{S_n(g^2)
    \leqslant  1.03 \bar\alpha_2 S_n(z)_- + 
    0.53 S_n(v^2) + \frac{1.03|z_{n+1}|}\eta
    + \frac{0.93 \gamma^2 p}{m}.}
    \label{eq:xx}
\end{align}
Let us now proceed with a similar treatment for the
last inequality of \Cref{lem:8}. Applying 
\Cref{lem:rho-sum}, we get $S_{n}^{+1}(f)\geqslant 
\varrho S_n(f) - \varrho^{n+1} \mathbb E[f_0]\geqslant 
(1 - m \eta/\gamma ) S_n(f) - \varrho^{n+1} 
\mathbb E[f_0]$, which leads to
\begin{align}
    -m\eta S_n(f)
        &\leqslant\varrho^{n+1} \gamma\mathbb E[f_0] 
        + 0.28 \eta^2 S_n (v^2) + \bar\alpha_2 \eta  
        S_n(z) - 0.46\eta^2 S_n(g^2) + 0.09 
        \frac{\eta^2\gamma^2 p}{m}.
\end{align}
From this inequality, and the Polyak-Lojasievicz 
condition, one can infer that
\begin{align}\label{eq:Snz2}
    \boxed{
    \bar\alpha_2 S_n(z)_-
        \leqslant \varrho^{n+1} \gamma \mathbb E
        [f_0]/\eta + 0.28 \eta S_n(v^2) + (0.5 - 
        0.46\eta) S_n(g^2) + 0.09 \frac{\eta\gamma^2 
        p}{m}.}
\end{align}
Combining \eqref{eq:Snz2} with \eqref{eq:54},
we get
\begin{align}
    S_n(v^2) \leqslant 1.38 \Big(\varrho^{n+1} 
    \gamma\mathbb E[f_0]/\eta &+ 0.28 \eta S_n(v^2) + 
    (0.5 - 0.46\eta) S_n(g^2) + 0.09 \frac{\eta\gamma^2 p}{m}\Big)\\
    & + 1.38\eta S_n(g^2)  +\frac{1.48\gamma^2 p
    }{m\eta}.
\end{align}
Since $\eta\leqslant 0.1$, it follows then
\begin{align}\label{eq:h1}   
     \boxed{S_n(v^2) \leqslant 0.8\Big( S_n(g^2) + \frac{1.8 \varrho^{n}\gamma}{
     \eta} \,\mathbb E[f_0] +  
     \frac{2\gamma^2p}{m\eta}\Big)\label{eq:55}\,.}
\end{align}
Similarly, combining \eqref{eq:Snz2} and
\eqref{eq:xx}, we get
\begin{align}
    S_n(g^2) \leqslant 1.03 \Big(\varrho^{n}\gamma 
    \mathbb E[f_0]/\eta &+ 0.28 \eta S_n(v^2) + 
    (0.5 - 0.46\eta) S_n(g^2) + 0.09 \frac{\eta\gamma^2 p}{m}\Big)\\
    & + 0.53 S_n(v^2) + \frac{1.03|z_{n+1}|}{\eta} + \frac{0.93\gamma^2 p
    }{m}.
\end{align}
Since $\eta\leqslant 0.1$, it follows then
\begin{align}\label{eq:h2}
    \boxed{
    S_n(g^2) \leqslant 1.05  S_n(v^2) + \frac{ 1.94\varrho^{n}\gamma}{\eta}\, 
    \mathbb E[f_0] +  
     \frac{1.94|z_{n+1}|}{\eta} + \frac{0.94\gamma^2 p}{m}.
    }
\end{align}
Equations \eqref{eq:h1} and \eqref{eq:h2} 
together yield 
\begin{align}
    S_n(g^2) &\leqslant 0.84 \Big( 
    S_n(g^2) + \frac{1.8 \varrho^{n}\gamma}{
     \eta} \,\mathbb E[f_0] +  
     \frac{2\gamma^2p}{m\eta}\Big) 
     + \frac{ 1.94\varrho^{n}\gamma}{\eta}\, 
    \mathbb E[f_0] +  
     \frac{1.94|z_{n+1}|}{\eta} + \frac{0.94\gamma^2 p}{m}\\
     &\leqslant 0.84  
    S_n(g^2) 
     + \frac{ 3.46\varrho^{n}\gamma}{\eta}\, 
    \mathbb E[f_0] +  
     \frac{1.94|z_{n+1}|}{\eta} + \frac{1.78\gamma^2 p}{m\eta}.
\end{align}
Hence, we get
\begin{align}
    \boxed{
    S_n(g^2) \leqslant \frac{ 21.7\varrho^{n}\gamma}{\eta}\, \mathbb E[f_0] +  
     \frac{12.2|z_{n+1}|}{\eta} + \frac{11.2\gamma^2 p}{m\eta}.}
\end{align}
Using once again equation \eqref{eq:h1}, 
we arrive at 
\begin{align}
    S_n(v^2) &\leqslant 0.8\Big( \frac{ 21.7\varrho^{n}\gamma}{\eta}\, \mathbb E[f_0] +  
     \frac{12.2|z_{n+1}|}{\eta} + \frac{11.2\gamma^2 p}{m\eta} + \frac{1.8 \varrho^{n+1}}{
     \eta} \,\mathbb E[f_0] +  
     \frac{2\gamma^2p}{m\eta}\Big),
\end{align}
which is equivalent to
\begin{align}
    \boxed{
    S_n(v^2) \leqslant  \frac{ 18.8\varrho^{n}\gamma}{\eta}\, \mathbb E[f_0] +  
     \frac{9.8|z_{n+1}|}{\eta} + \frac{10.6\gamma^2 p}{m\eta}.}
\end{align}
To complete the proof of the proposition, 
it remains to establish the suitable upper
bound on $|z_{n+1}|$. To this end, we note
that 
\begin{align}
\norm{\bg_n}_{\Ltwo} &\leqslant 
    M \norm{\bvartheta_n-\bL_{nh}}_{\Ltwo}+\sqrt{M p}\\
    &\leqslant 0.2\norm{\gamma(\bvartheta_n-\bL_{nh})}_{\Ltwo} + \sqrt{0.2\gamma p}\\
    &\leqslant 0.3 (x_n + 1.5\sqrt{\gamma p})\\
\norm{\bv_n}_{\Ltwo}
    &\leqslant \norm{\bv_n-\bV_{nh}}_{\Ltwo}+\sqrt{\gamma p}\\
    &\leqslant \norm{\bv_n-\bV_{nh}}_{\Ltwo}+\sqrt{\gamma p}\\
    &\leqslant x_n +\sqrt{\gamma p}.
\end{align}
Then,  following the same steps as those
used in the proof of the second inequality of \Cref{lem:8}, one can infer that 
\begin{align}
    |z_{n+1}| 
    &\leqslant  
        |z_n| + 0.97\eta\|
        \bg_n\|_{\Ltwo}^2 +  0.51 \eta\|\bv_n\|_{
        \Ltwo}^2 +  0.09  \eta^2\gamma p\\
    &\leqslant  \|
        \bg_n\|_{\Ltwo}\|
        \bv_n\|_{\Ltwo}
         + 0.1\|\bg_n\|_{\Ltwo}^2 +  0.051 \|\bv_n\|_{
        \Ltwo}^2 +  0.001 \gamma p\\
    &\leqslant 1.1\|\bg_n\|^2_{\Ltwo} + 
    0.301\|\bv_n\|^2_{\Ltwo} + 0.001\gamma p\\
    &\leqslant 0.099(x_n + 1.5\sqrt{\gamma p})^2 
        + 0.301(x_n + 1.1\sqrt{p})^2\\
    &\leqslant 0.4\big(x_n +1.5 \sqrt{p}\big)^2.
\label{eq:help4}
\end{align}
This completes the proof of the proposition. 

\end{proof}

\subsection{Proof of \Cref{thm:rklmc}}

Let $\bvartheta_{n+U},\bvartheta_{n+1},\bv_{n+1}$ 
be the iterates of Algorithm.
Let $(\bL_t,\bV_t)$ be the kinetic Langevin
diffusion, coupled with $(\bvartheta_n,\bv_n)$ through the same Brownian motion and starting from a random point $(\bL_0,\bV_0)\propto \exp(-f (\btheta) - \frac{1}{2}\|\bv\|^2)$ such that $\bV_0 = \bv_0$. Let $(\bL'_t,\bV'_t)$ be the kinetic 
Langevin diffusion defined on $[0,h]$ using the same Brownian motion and starting from $(\bvartheta_n, 
\bv_n)$. 

Our goal will be to bound the term $x_n$ defined by
\begin{align}
    x_n = \bigg\| \bfC
    \begin{bmatrix}
        \bv_n- \bV_{nh}\\
        \bvartheta_n-\bL_{nh}
    \end{bmatrix}
    \bigg\|_{\Ltwo}
    \quad \text{with}\quad 
    \bfC = \begin{bmatrix}
    \mathbf I_p & \mathbf 0_p\\
    \mathbf I_p & \gamma\mathbf I_p
    \end{bmatrix}.
\end{align}
To this end, define
\begin{align}
    \bar\bv_{n+1} = \mathbb E_{U}[\bv_{n+1}],\qquad
    \bar\bvartheta_{n+1} = \mathbb E_{U}[\bvartheta_{n+1}]. 
\end{align}
Since $(\bV_{(n+1)h},\bL_{(n+1)h})$ are independent
of $U$, we have 
\begin{align}
    x_{n+1}^2 & = \bigg\| \bfC
    \begin{bmatrix}
        \bv_{n+1}- \bar\bv_{n+1}\\
        \bvartheta_{n+1}-\bar\bvartheta_{n+1}
    \end{bmatrix}
    \bigg\|_{\Ltwo}^2 + 
    \bigg\| \bfC
    \begin{bmatrix}
        \bar\bv_{n+1}- \bV_{(n+1)h}\\
        \bar\bvartheta_{n+1}-\bL_{(n+1)h}
    \end{bmatrix}
    \bigg\|_{\Ltwo}^2.
\end{align}
Using the triangle inequality and Proposition~\ref{prop:klmc_contr} (See also Proposition 1 from~\citep{dalalyan_riou_2018}), we get
\begin{align}
    \bigg\| \bfC
    \begin{bmatrix}
        \bar\bv_{n+1}- \bV_{(n+1)h}\\
        \bar\bvartheta_{n+1}-\bL_{(n+1)h}
    \end{bmatrix}
    \bigg\|_{\Ltwo}&\leqslant 
    \bigg\| \bfC
    \begin{bmatrix}
        \bar\bv_{n+1}- \bV'_{h}\\
        \bar\bvartheta_{n+1}-\bL'_{h}
    \end{bmatrix}
    \bigg\|_{\Ltwo}   
    +
    \bigg\| \bfC
    \begin{bmatrix}
        \bar\bV'_{h}- \bV_{(n+1)h}\\
        \bar\bL'_{h}-\bL_{(n+1)h}
    \end{bmatrix}
    \bigg\|_{\Ltwo}\\
    &\leqslant \bigg\| \bfC
    \begin{bmatrix}
        \bar\bv_{n+1}- \bV'_{h}\\
        \bar\bvartheta_{n+1}-\bL'_{h}
    \end{bmatrix}
    \bigg\|_{\Ltwo}   
    + \varrho x_{n}
\end{align}
where $\varrho = e^{-mh}$. Combining these
inequalities, we get
\begin{align}
    x_{n+1}^2 &\leqslant \big(\varrho x_{n} + y_{n+1}\big)^2 + z_{n+1}^2 
\end{align}
where
\begin{align}
    y_{n+1} = \bigg\| \bfC
    \begin{bmatrix}
        \bar\bv_{n+1} - \bV'_{h}\\
        \bar\bvartheta_{n+1} - \bL'_{h}
    \end{bmatrix}
    \bigg\|_{\Ltwo},\qquad 
    z_{n+1} = \bigg\| \bfC
    \begin{bmatrix}
        \bv_{n+1}- \bar\bv_{n+1}\\
        \bvartheta_{n+1}-\bar\bvartheta_{n+1}
    \end{bmatrix}
    \bigg\|_{\Ltwo}^2.
\end{align}
This yields\footnote{One can check by induction, 
that if for some sequences $x_n,y_n,z_n$ and 
some $\varrho \in(0,1)$ it holds that $x_{n+1}^2
\leqslant  (\varrho x_{n} + y_{n+1})^2 + z_{n+1
}^2$, then necessarily $x_n\leqslant \varrho^n 
x_0 + \sum_{k=1}^n \varrho^{n-k}y_k + (\sum_{
k=1}^n \varrho^{2(n-k)}z_k^2)^{1/2}$ for every 
$n\in\mathbb N$.}
\begin{align}
    x_n &\leqslant \varrho^n x_0 + \sum_{k=1}^n 
    \varrho^{n-k} y_k + \bigg(\sum_{k=1}^n
    \varrho^{2(n-k)} z_k^2\bigg)^{1/2}\\
    &\leqslant \varrho^n x_0 + \bigg(\frac{1}{
    {1-\varrho}}\sum_{k=1}^n \varrho^{n-k} y_k^2 
    \bigg)^{1/2} + \bigg(\sum_{k=1}^n \varrho^{
    2(n-k)} z_k^2\bigg)^{1/2},
    \label{eq:xnC}
\end{align}
where the second inequality follows from
the Cauchy-Schwarz inequality and the formula
of the sum of a geometric progression. 
Using the fact that $\|\bfC[a, b]^\top\|^2 = 
\|a\|^2 + \|a + \gamma b\|^2 \leqslant 3\|a\|^2 
+ 2\gamma^2\|b\|^2$, we arrive at
\begin{align}
    y_{n+1}^2&\leqslant 3\|\bar\bv_{n+1} - \bV'_h 
    \|_{\Ltwo}^2 + 2\gamma^2 \|\bar\bvartheta_{
    n+1} - \bL'_h\|_{\Ltwo}^2, \label{eq:yn}\\
    z_{n+1}^2&\leqslant 3\|\bv_{n+1} - \bar\bv_{
    n+1} \|_{\Ltwo}^2 + 2\gamma^2 \|\bvartheta_{
    n+1} - \bar\bvartheta_{n+1}\|_{\Ltwo}^2.
    \label{eq:zn}
\end{align}
We then have
\begin{align}
   x_n     
   &\leqslant \varrho^n x_0
   +\bigg(\frac{1.001\gamma}{m\eta}\sum_{k=1}^n 
   \varrho^{n-k} (3\|\bar\bv_{k} - \bV'_h 
    \|_{\Ltwo}^2 + 2\gamma^2 \|\bar\bvartheta_{k} 
    - \bL'_h\|_{\Ltwo}^2) \bigg)^{1/2}\\
   &\qquad +\bigg(\sum_{k=1}^n
    \varrho^{2(n-k)} (3\|\bv_{k} - \bar\bv_{k} 
    \|_{\Ltwo}^2 + 2\gamma^2 \|\bvartheta_{k} -
    \bar\bvartheta_{k}\|_{\Ltwo}^2)\bigg)^{1/2}\,.
    \label{eq:xn_rkl}
\end{align}
By Corollary~\ref{cor:rklmc}, we find
\begin{align}
    \|\bar\bv_{k} - \bV'_h \|_{\Ltwo}^2     
        &\leqslant M_\gamma^4\eta^8\Big(0.066 
        \eta \|\bg_{k-1}\|_{\Ltwo} + 0.168 
        \|\bv_{k-1}\|_{\Ltwo} +  0.137\sqrt{
        \eta\gamma p }\Big)^2\\
        &\leqslant  0.2^3 M_\gamma\eta^8\times 
        0.0514\big(\eta^2 \|\bg_{k-1}\|_{\Ltwo}^2 
        + \|\bv_{k-1}\|_{\Ltwo}^2 + \eta\gamma p 
        \big),\\
    \gamma^2 \|\bar\bvartheta_{k} - 
        \bL_h'\|_{\Ltwo}^2 &\leqslant  M_\gamma^4
        \eta^{10} \Big(0.038 \eta\|\bg_{k-1}\|_{
        \Ltwo} + 0.098\|\bv_{k-1}\|_{\Ltwo} + 
        0.084\sqrt{\eta\gamma p}\Big)^2\\
        &\leqslant 0.2^3M_\gamma\eta^{8}\times 
        0.0002\big(\eta^2 \|\bg_{k-1} 
        \|_{\Ltwo}^2 + \|\bv_{k-1}\|_{\Ltwo}^2
        +  \eta\gamma p \big)\\
    \|\bv_k - \bar\bv_{k}\|_{\Ltwo}^2 
        &\leqslant \eta^4\Big(0.591  \|\bg_{k-1}
        \|_{\Ltwo} + 0.164\|\bv_{k-1}\|_{\Ltwo} 
        + 0.082\sqrt{\eta\gamma p }\Big)^2\\
        &\leqslant   \eta^4\times 0.39\big( 
        \|\bg_{k-1}\|_{\Ltwo}^2 + \|\bv_{k-1}
        \|_{\Ltwo}^2 + \eta\gamma p \big),\\
    \gamma^2 \|\bvartheta_k-\bar\bvartheta_{k} 
        \|_{\Ltwo}^2 &\leqslant  \eta^4 \Big( 
        0.578 \|\bg_{k-1}\|_{\Ltwo} + 0.02\|
        \bv_{k-1}\|_{\Ltwo} + 0.005\sqrt{\eta
        \gamma p}\Big)^2\\
    &\leqslant \eta^{4}\times 0.32\big(\|
        \bg_{k-1}\|_{\Ltwo}^2 + \|\bv_{k-1}
        \|_{\Ltwo}^2 + \eta\gamma p \big)
\end{align}
Therefore, we infer from \eqref{eq:xn_rkl} 
that
\begin{alignat}{2}
    x_n &\leqslant    \varrho^n x_0 &+& 
        \bigg(\frac{\gamma}{m\eta}\sum_{k=0}^n 
        0.2^2 M_\gamma\eta^8 \times 0.031 
        \varrho^{n-k}\big(\eta^2 \|\bg_k 
        \|_{\Ltwo}^2 + \|\bv_k\|_{\Ltwo}^2
        +  \eta\gamma p \big)  \bigg)^{1/2}\\
        & &+& \bigg(\sum_{k=0}^n 1.82\eta^4 
        \varrho^{2(n-k)}\big(\norm{\bg_k}^2_{
        \Ltwo} + \norm{\bv_k}^2_{\Ltwo} + 
        \eta\gamma p\big)  \bigg)^{1/2}\,.
\end{alignat}
From \Cref{prop:4} it then follows that
\begin{align}
\eta\sum_{k=0}^n \varrho^{n-k}\big(\eta^2 
    \|\bg_k\|_{\Ltwo}^2 + \|\bv_k\|_{\Ltwo}^2
    +  \eta\gamma p \big)
    &\leqslant 18.9\varrho^{n}\gamma \mathbb 
    E[f_0] +  3.97(x_n + 1.5\sqrt{\gamma p})^2 
    + \frac{10.8\gamma^2 p}{m},\\
\eta\sum_{k=0}^n \varrho^{2(n-k)}\big(
     \norm{\bg_k}^2_{\Ltwo} + \norm{\bv_k}^2_{
     \Ltwo} + \eta\gamma p\big) &\leqslant 
     40.5\varrho^{n}\gamma \mathbb E[f_0] +  
     8.8(x_n + 1.5\sqrt{\gamma p})^2 + 
     \frac{21.9\gamma^2 p}{m}.
\end{align}
This yields
\begin{alignat}{2}
    x_n &\leqslant    \varrho^n x_0 &+& 0.036
        \eta^3\sqrt{\kappa}\Big( 18.9\varrho^{n} 
        \gamma \mathbb E[f_0] +  3.97(x_n + 1.5
        \sqrt{\gamma p})^2 + \frac{10.8\gamma^2 
        p}{m}\Big)^{1/2}\\
        & &+& \eta^{3/2}\Big(74\varrho^{n}\gamma
        \mathbb E[f_0] + 16(x_n + 1.5\sqrt{\gamma 
        p})^2 + \frac{40\gamma^2 p}{m}\Big)^{1/2}\,\\
        &\leqslant \varrho^n x_0 & +&  (0.072 
        \eta^3\sqrt{\kappa} + 4\eta^{3/2}) x_n + 
        (0.16\eta^3\sqrt{\kappa} + 8.7\eta^{3/2}) 
        \sqrt{\varrho^n\gamma\mathbb E[f_0]}\\
        & & + & 0.12\eta^3\gamma\sqrt{\kappa p/m} 
        + 6.4\eta^{3/2} \gamma \sqrt{p/m}.
\end{alignat}
We assume that $\eta\kappa^{1/6}\leqslant 0.1$, 
which implies that
\begin{align}
    x_n &\leqslant    \varrho^n x_0  +0.072 x_n 
    + 0.16 \sqrt{\varrho^n\gamma\mathbb E[f_0]} 
    +  0.12\eta^3\gamma\sqrt{\kappa p/m} 
    + 6.4\eta^{3/2} \gamma \sqrt{p/m}.
\end{align}
Rearranging the display leads to
\begin{align}
x_n &\leqslant    1.08\varrho^n x_0 + 0.18 
    \sqrt{\varrho^n\gamma\mathbb E[f_0]} + 
    0.12\eta^3\gamma\sqrt{\kappa p/m} 
    + 6.9\eta^{3/2} \gamma \sqrt{p/m}.
\end{align}
Finally, we use the fact that $x_0 = \gamma 
\wass_2(\nu_0,\pi)$ and $x_n\geqslant \gamma 
\wass_2(\nu_n,\pi)/\sqrt{2}$ to get the 
claim of the theorem.

\subsection{Proofs of the technical lemmas}

We now provide the proofs of the technical 
lemmas that we used in this section.

\subsubsection{Proof of Lemma~\ref{lem:9}}

By the definition of $\bvartheta_{n+U}$, we have
\begin{align}
    \|\bvartheta_{n+U} -  \bL'_{U h}\| 
    &\leqslant    \bigg\|\int_0^{
    U h}\big(1-e^{-\gamma(U h -s)}\big)\big(
    \nabla f(\bvartheta_n)-\nabla f(\bL'_s)\big) 
    \rmd s\bigg\|\\
    &\leqslant \int_0^{U h}
    \Big\|\big(1-e^{-\gamma(U h -s)}\big)\big(
    \nabla f(\bL'_0)-\nabla f(\bL'_s)\big)\Big \|
    \,\rmd s\\ 
    & = Uh  \int_0^1  \big(1- e^{-
    U\eta(1-t)}\big)\big\|\nabla f(\bL'_0) 
    - \nabla f(\bL'_{Uht})\big\|\,\rmd t\\
    & \leqslant Mh\eta U^2  \int_0^1 (1-t)
    \big\|\bL'_0 - \bL'_{Uht}\big\|\,\rmd t,
\end{align}
where in the last inequality we have used the
Lipschitz property of $\nabla f$ and the 
inequality $1-e^{-U\eta(1-t)}\leqslant 
U\eta(1 -t)$. By taking the expectation
wrt to $U$, we get
\begin{align}
    \mathbb E_U\|\bvartheta_{n+U} -  \bL'_{U h}
    \|^2 &\leqslant M^2 h^2\eta^2 \mathbb E_U\bigg[ 
    U^4  \bigg\{\int_0^1 (1-t) \big\| \bL'_0 
    - \bL'_{Uht}\big\|\,\rmd t\bigg\}^2\bigg]\\
    &\leqslant \frac{M^2h^2\eta^2}{3} \mathbb E_U\bigg[ 
    U^4  \int_0^1 \big\| \bL'_0 
    - \bL'_{Uht}\big\|^2\,\rmd t\bigg]\\
    &\leqslant \frac{M^2h^2\eta^2}{3} \mathbb E_U [U^3] 
    \int_0^1 \big\| \bL'_0 - \bL'_{ht}\big\|^2\,
    \rmd t.
\end{align}
Hence, we obtain in view of \cref{lem21:5}
\begin{align}
    \|\bvartheta_{n+U} - \bL'_{U h}\|_{\Ltwo}^2 
    &\leqslant  \frac{M^2h^2\eta^2}{12} \int_0^1 
    \|\bL'_0-\bL'_{ht} \|_{\Ltwo}^2\,\rmd t\\
    &\leqslant  \frac{M^2h^2\eta^2 e^{M_\gamma\eta^2}}{12} \int_0^1 
    \bigg(\sqrt{\frac{2\gamma^2 p(ht)^3}{3}} + ht\|\bv_n
    \|_{\Ltwo} + \frac{\gamma(ht)^2}{2}\|\nabla f(\bvartheta_n)
    \|_{\Ltwo}\bigg)^2\rmd t \\
    & \leqslant \frac{\gamma^{-2}M_\gamma^2\eta^6 e^{M_\gamma\eta^2}}{12} 
    \bigg\{\sqrt{(2/3)\eta\gamma p} + \sqrt{1
    /3}\,\|\bv_n\|_{\Ltwo} + \sqrt{0.05} \gamma h
    \|\nabla f(\bvartheta_n)\|_{\Ltwo}\bigg\}^2.
\end{align}
Taking the square root of the two sides of the
inequality, we get the claim of the lemma.

\subsubsection{Proof of \Cref{lem:2}}

By the definition of $\bvartheta_{n+1},$ we 
have
\begin{align}
    \|\bar\bvartheta_{n+1}-\bL'_h\| 
    & = \Big\|\mathbb E_U \Big[h
    \big(1-e^{-\gamma(h-U h)}\big)\nabla f 
    (\bvartheta_{n+U})\Big]- 
    \int_0^h\big(1-e^{-\gamma(h-s)}\big)\nabla
    f(\bL'_s)\,\rmd s\Big\| \\
    & = \Big\|\mathbb E_U \Big[h
    \big(1-e^{-\gamma(h-U h)}\big)\nabla f_{n+U}\Big]-h
    \mathbb E_U \Big[\big(1-e^{-\gamma(h-hU)}
    \big)\nabla f(\bL'_{U h})\Big]\Big\| \\
    &\leqslant  h
    \mathbb E_U\Big[\big(1-e^{-\gamma(1-U)
    h} \big)\|\nabla f_{n+U}-\nabla f(\bL'_{U h})\| \Big]\\
    &\leqslant  M_\gamma\eta^2 \mathbb E_U\Big[(1- U)\|
    \bvartheta_{n+U} - \bL'_{U h}\| \Big],
\end{align}
where in the last inequality follows from the 
smoothness of function $f$ and the fact that 
$1-e^{-\gamma(h-U h)}\leqslant \gamma (1 - U)h$. 
Using the Cauchy-Schwarz inequality, we get
\begin{align}
    \|\bar\bvartheta_{n+1}-\bL'_h\|^2 
    & \leqslant  M_{\gamma}^2\eta^4 \mathbb E_U[(1 - U )^2]
    \,\mathbb E_U\big[\|\bvartheta_{n+U} - 
    \bL'_{U h}\|^2 \big]\\
    & = \frac{M_{\gamma}^2\eta^4}{3} \mathbb E_U\big[\|
    \bvartheta_{n+U} - \bL'_{U h} \|^2 \big].
\end{align}
By \Cref{lem:9}, we then obtain
\begin{align}
\|\bar\bvartheta_{n+1}-\bL'_h\|_{\Ltwo} 
&\leqslant  \frac{M_\gamma\eta^2}{\sqrt{3}}  \|
\bvartheta_{n+U} - \bL'_{U h} \|_{\Ltwo}\\
&\leqslant \frac{M_{\gamma}^2 \eta^5e^{M_\gamma\eta^2/2}}{\sqrt{3}\gamma}
\Big(0.065 \eta \|\bg_n\|_{\Ltwo} 
+ (1/6) \|\bv_n\|_{\Ltwo} + (1/7) \sqrt{
\eta\gamma p }\Big). 
\end{align}
This completes the proof of the first claim.

Using the definition of $\bvartheta_{n+1}$, and 
the fact that the mean minimizes the squared 
integrated error, we get
\begin{align}
\|\bvartheta_{n+1}-\bar\bvartheta_{n+1}\|_{\Ltwo}
& = h\Big\|\big(1 - 
e^{-\gamma h(1-U) }\big)\nabla f_{n+U} 
- \mathbb E_U\big[\big(1 - 
e^{-\gamma h(1-U)}\big)\nabla f_{n+U}\big]\Big\|_{\Ltwo}\\
&\leqslant   h \Big\|\big(1 - 
e^{-\gamma h(1 - U)}\big)\nabla f_{n+U} 
- \mathbb E_U\big[1 -  e^{-\gamma h(1-U)}\big]\nabla f_{n}\Big\|_{\Ltwo}.
\end{align}
Recall that $\bar U = 1-U$, combining this with the last display and the triangle inequality yields
\begin{align}
\|\bvartheta_{n+1}-\bar\bvartheta_{n+1}\|_{\Ltwo}
&\leqslant   h\Big\|\big(1 - 
e^{-\eta\bar U}\big)
\big(\nabla f_{n+U} - \nabla f_n\big)\Big\|_{\Ltwo}
    + h\Big\|\big( 
e^{-\eta \bar U} - \mathbb E[e^{-\eta \bar U}]\big)\nabla f_{n}\Big\|_{\Ltwo}\\
    &\leqslant M_\gamma\eta^2\big\|\bar  U(\bvartheta_{n+U} - \bvartheta_n)\big\|_{\Ltwo} +  h \eta \|\bar U\|_{\Ltwo}
    \|\bg_n\|_{\Ltwo}.\label{eq:41}
\end{align}
In view of \eqref{eq:thetaU}, we get
\begin{align}
    \big\|(1 - U)(\bvartheta_{n+U} - \bvartheta_n)
    \big\|_{\Ltwo}^2 &= \mathbb E\Big[(1-U)^2
    \Big( \| (\eta/\gamma)(U\bar\alpha_1 \bv_n - U^2\eta 
    \bar\beta_1 \bg_n)\|^2
    + (2/3) U^3 \eta^3 p/\gamma \Big)\Big]\\
    &\leqslant \frac{\eta^2\|\bv_n\|_{\Ltwo}^2}{15\gamma^2} + 
    \frac{\eta^4\|\bg_n
    \|^2_{\Ltwo}}{210\gamma^2} + \frac{\eta^3 p}{90\gamma}.
\end{align}
In addition, $\|1-U\|_{\Ltwo} = \sqrt{1/3}$. Therefore,
we infer from \eqref{eq:41} that 
\begin{align}
    \|\bvartheta_{n+1}-\bar\bvartheta_{n+1}\|_{\Ltwo} 
    &\leqslant\frac{M_\gamma \eta^3}{\gamma}\Big(
    \frac{\|\bv_n\|_{\Ltwo}}{\sqrt{15}}  + \sqrt{\frac{\eta\gamma p}{90}}
    \Big) + \frac{\eta^2}{\gamma}\Big(\frac{M_{\gamma}\eta^2}{\sqrt{210}} +\frac{1}{\sqrt{3}} \Big)\|\bg_n\|_{\Ltwo}. 
\end{align}
Numerical computations complete the proof of
the second claim.

By the definition \eqref{eq:vn+1} 
of $\bv_{n+1},$ we have
\begin{align}
\|\bar\bv_{n+1}-\bV'_{h}\|_{\Ltwo}
&=\gamma \bigg\|\mathbb E_U \big[he^{-\gamma(h-U h)} \nabla f_{n+U}\big] - \int_0^h e^{-\gamma(t-s)}\nabla f(\bL'_s)\,\rmd s\bigg\|_{\Ltwo}\\
&\leqslant   \gamma\Big\|he^{-\gamma(h-U h)}\nabla f_{n+U}-he^{-\gamma(h-U h)}\nabla f(\bL'_{U h})\Big\|_{\Ltwo}\\
&\leqslant M \gamma h \|\bvartheta_{n+U} - \bL'_{U h} \|_{\Ltwo}.
\end{align}
By \Cref{lem:9}, we obtain the third claim of the lemma.

In view of \eqref{eq:vn+1-1}, and the fact that the expectation minimizes the mean squared error, we have
\begin{align}
    \|\bv_{n+1}-\bar\bv_{n+1}\|_{\Ltwo}
    & = \gamma {h}\Big\|
e^{-\gamma h(1-U)}\nabla f_{n+U} 
- \mathbb E_U\big[ 
e^{-\gamma h(1-U)}\nabla f_{n+U}\big]\Big\|_{\Ltwo}\\
&\leqslant   \gamma h\Big\| 
e^{-\gamma h(1 - U)}\nabla f_{n+U} 
- \mathbb E_U\big[ e^{-\gamma h(1-U)}\big]\nabla f_{n}\Big\|_{\Ltwo}.
\end{align}
The last display, the notation $\bar U = 1 - U$ 
and the triangle inequality imply that 
\begin{align}
    \|\bv_{n+1}-\bar\bv_{n+1}\|_{\Ltwo}
    &\leqslant \gamma h\big\| e^{-\eta\bar U} 
    \big(\nabla f_{n+U} - \nabla f_{n}\big)
    \big\|_{\Ltwo} 
    +\gamma h\big\| \big( e^{-\eta\bar U} - \mathbb E_U[e^{-\eta \bar 
    U}]\big) \nabla f_{n}\big\|_{\Ltwo}\\
    &\leqslant  M \gamma h\big\|\bvartheta_{n+U} 
    - \bvartheta_{n}\big\|_{\Ltwo} + \gamma h\big\| 
    \big( e^{-\eta(1 - U)} - 1\big) 
    \nabla f_{n}\big\|_{\Ltwo}\\
    &\leqslant M \gamma h\big\|\bvartheta_{n+U}
    - \bvartheta_{n}\big\|_{\Ltwo}
    + \frac{\eta^2}{\sqrt3}\big\|
    \bg_n\big\|_{\Ltwo}.\label{eq:42}
\end{align}
In view of \eqref{eq:thetaU}, we get
\begin{align}
    \big\|\bvartheta_{n+U} - \bvartheta_n
    \big\|_{\Ltwo}^2 &= \mathbb E\Big[
    \|(\eta/\gamma)(U \bar\alpha_1 \bv_n - U^2 \eta 
    \bar\beta_1\,\bg_n)\|^2
    + (2/3) \eta^3 U^3p/\gamma\Big]\\
    &\leqslant \frac{2\eta^2\|\bv_n\|_{\Ltwo}^2}{3\gamma^2} + 
    \frac{\eta^4\|\bg_n
    \|^2_{\Ltwo}}{10\gamma^2} + \frac{\eta^3 p}{6\gamma}.
\end{align}
The last claim of the lemma follows
from the previous display and \eqref{eq:42}.

\subsubsection{Proof of \Cref{lem:8}}
    From \eqref{eq:thetaU}, \eqref{eq:thetan+1}  and
    \eqref{eq:vn+1}, it follows that
    \begin{align}
        \gamma^2\|\bvartheta_{n+U}-\bvartheta_n\|_{\Ltwo}^2
        &\leqslant \|U\eta \bar\alpha_1 \bv_n -
        (U\eta)^2\bar\beta_1\,\bg_n\|^2_{\Ltwo} +(1/6)\eta^3 \gamma p\\
        & \leqslant  (2\eta^2/3)\|\bv_n\|^2_{\Ltwo} 
        + (\eta^4/10)\|\bg_n\|_{\Ltwo}^2 +(\eta^3/6)
        \gamma p\label{eq:44}
    \end{align}
    and 
    \begin{align}
        \gamma\|\bvartheta_{n+1}-\bvartheta_n\|_{\Ltwo}
        &\leqslant \eta\|\bv_n\|_{\Ltwo} + 0.5
        \eta^2\|\bg_{n+U}\|_{\Ltwo} +\sqrt{(2/3) \eta^3 \gamma p}\\
        &\leqslant \eta\|\bv_n\|_{\Ltwo} + 0.5
        \eta^2\|\bg_{n}\|_{\Ltwo} + 0.5M_\gamma\eta^2\gamma\|\bvartheta_{n+U}-\bvartheta_{n}\|_{\Ltwo}+\sqrt{(2/3)\eta^3 \gamma p}\\
        &\leqslant 1.001\eta\|\bv_n\|_{\Ltwo} + 0.501
        \eta^2\|\bg_{n}\|_{\Ltwo} +\sqrt{0.67\eta^3 \gamma p}\label{eq:45}
    \end{align}
    where in the last step we have used \eqref{eq:44} and the
    fact that $M_\gamma\eta^2/2\leqslant \eta ^2/8\leqslant 1/200$.
    A bit more precise computations also yield
    \begin{align}
        \gamma\|\bvartheta_{n+1}-\bvartheta_n\|_{\Ltwo}
        &\leqslant \big\{(\eta\|\bv_n\|_{\Ltwo} + 
         \eta^2\|\bar\beta_2\bg_n\|_{\Ltwo})^2 + 
         {\textstyle \frac23}\gamma\eta^3 p \big\}^{1/2} + 
         \frac{\eta^2}{2} \|\bg_{n+U} - \bg_n 
         \|_{\Ltwo} \\
         &\leqslant \Big\{\big(\eta\|\bv_n\|_{\Ltwo} + 
         {\textstyle\frac{\eta^2}{\sqrt{3}}} \|\bg_n
         \|_{\Ltwo} \big)^2 + {\textstyle \frac23}\gamma\eta^3 p \Big\}^{1/2} + \frac{M_\gamma\eta^2\gamma}{2} \|
         \bvartheta_{n+U} - \bvartheta_n\|_{\Ltwo}\\
         &\leqslant \big\{\big(\eta\|\bv_n\|_{\Ltwo} + 
         {\textstyle\frac{\eta^2}{\sqrt{3}}} \|\bg_n
         \|_{\Ltwo} \big)^2 + {\textstyle \frac23}\gamma\eta^3 p \big\}^{1/2} + \tfrac1{10}\eta^2\gamma \|
         \bvartheta_{n+U} - \bvartheta_n\|_{\Ltwo}.
    \end{align}
    Taking the squares of this inequality, we get
    \begin{align}
    \gamma^2\|\bvartheta_{n+1}-\bvartheta_n\|_{\Ltwo}^2
         &\leqslant \big( \eta\|\bv_n\|_{\Ltwo} + 
         {\textstyle\frac{\eta^2}{\sqrt{3}}} \|\bg_n
         \|_{\Ltwo} \big)^2 + {\textstyle \frac23}\gamma 
         \eta^3 p + \tfrac{1}{100} \eta^4\gamma^2\| \bvartheta_{n+U} -
         \bvartheta_n\|_{\Ltwo}^2\\ 
         & \quad + \tfrac15\eta^2\gamma\Big\{\big(\eta\|\bv_n\|_{\Ltwo} + 
         {\textstyle\frac{\eta^2}{\sqrt{3}}} \|\bg_n
         \|_{\Ltwo} \big)^2 + {\textstyle \frac23}\gamma\eta^3 p\Big\}^{1/2} \| \bvartheta_{n+U} - \bvartheta_n 
         \|_{\Ltwo}\\
         &\leqslant \big(1+ \tfrac1{10}\eta^2\big) 
         \eta^2\Big\{\big( \| \bv_n \|_{\Ltwo} + 
         {\textstyle\frac{\eta}{\sqrt{3}}} \|\bg_n
         \|_{\Ltwo} \big)^2 + {\textstyle \frac23}\eta \gamma p  + \tfrac1{10}\gamma^2 \| \bvartheta_{n+U} - \bvartheta_n\|_{\Ltwo}^2\Big\}\\
         &\leqslant 1.01\eta^2\Big\{\big( \|\bv_n\|_{\Ltwo} + 
         {\textstyle\frac{\eta}{\sqrt{3}}} \|\bg_n
         \|_{\Ltwo} \big)^2 + {\textstyle \frac23}\eta\gamma p+ 0.1\gamma^2 \|
         \bvartheta_{n+U} - \bvartheta_n\|_{\Ltwo}^2\Big\}\\
         &\leqslant 0.68\eta^2\big(3\|\bv_n\|^2_{\Ltwo} + \eta^2 
         \|\bg_n \|_{\Ltwo}^2 + \eta\gamma p\big). 
         \label{eq:48}
    \end{align}
    This implies that for $\gamma \geqslant 5M$ and
    $\eta\leqslant 1/5$, we have
    \begin{align}
        \|\bv_{n+1}\|_{\Ltwo}^2 & \leqslant (1-\bar\alpha_2\eta  
        )^2\|\bv_n\|_{\Ltwo}^2 - 4\eta(1-
        \bar\alpha_2\eta ) \mathbb E[\bar\beta_2\bv_n^\top\bg_{n+U}] + 
        \eta^2\|\bg_{n+U}\|_{\Ltwo}^2 + 2\eta\gamma  p\\
        &\qquad +  2\eta\sqrt{2\eta\gamma p}\|\bg_{n+U} -\bg_n\|_{\Ltwo}  \\
        & \leqslant (1-\bar\alpha_2\eta )^2\|\bv_n
        \|_{\Ltwo}^2 - 4\eta(1 - 
        \bar\alpha_2\eta ) \mathbb E[\bar\beta_2 
        \bv_n^\top
        \bg_{n+U}] + \eta^2\|\bg_{n+U} \|_{ 
        \Ltwo}^2 + 2\eta\gamma p\\
        &\qquad +  2M_\gamma \eta\sqrt{2\eta\gamma p}\|
        \gamma(\bvartheta_{n+U} -\bvartheta_n)\|_{\Ltwo}\\
        & \leqslant (1-\eta \bar\alpha_2)^2\|\bv_n
        \|_{\Ltwo}^2 - 4\eta(1-\eta \bar\alpha_2) 
        \mathbb E[\bar\beta_2\bv_n^\top\bg_{n+U}] + 
        \eta^2\|\bg_{n+U}\|_{\Ltwo}^2+ 2.1\eta\gamma p\\
        &\qquad + 20(M_\gamma \eta)^2\| 
        \gamma(\bvartheta_{n+U} -\bvartheta_n)\|_{\Ltwo}^2\\
        &\leqslant (1-\bar\alpha_2\eta )^2 \|\bv_n\|_{\Ltwo}^2 - 2\bar\alpha_2\eta  (1 -
        \eta \bar\alpha_2) \mathbb E [\bv_n^\top
        \bg_n] + 1.1\eta^2 \|\bg_n\|_{\Ltwo}^2 + 2.1\eta\gamma p\\
        &\qquad + 2M_\gamma \eta\|\bv_n\|_{\Ltwo} \|\gamma(\bvartheta_{n+U} -\bvartheta_n)\|_{\Ltwo} + 31 (M_\gamma \eta)^2 \|\gamma(\bvartheta_{n+U} -\bvartheta_n)\|_{\Ltwo}^2\\
        &\leqslant (1-\bar\alpha_2\eta )^2 \|\bv_n\|_{\Ltwo}^2 - 2\bar\alpha_2\eta  (1 -
        \eta \bar\alpha_2) \mathbb E [\bv_n^\top
        \bg_n] + 1.1\eta^2 \|\bg_n\|_{\Ltwo}^2 + 2.1\eta\gamma p\\
        &\qquad + 0.2 M_\gamma\eta^2\|\bv_n\|_{\Ltwo}^2 + 5M_\gamma
        \|\gamma(\bvartheta_{n+U} -\bvartheta_n)\|_{\Ltwo}^2 + 31(M_\gamma \eta)^2 \|\gamma(\bvartheta_{n+U} -\bvartheta_n)\|_{\Ltwo}^2\\
        &\leqslant (1-\bar\alpha_2\eta )^2 \|\bv_n\|_{\Ltwo}^2 - 2\bar\alpha_2\eta  (1 - \bar\alpha_2\eta ) \mathbb E [\bv_n^\top \bg_n] + 1.1 \eta^2 \|\bg_n\|_{\Ltwo}^2 + 2.1\eta\gamma p\\
        &\qquad + 0.2 M_\gamma\eta^2\|\bv_n\|_{\Ltwo}^2 + 5.4M_\gamma 
        \|\gamma(\bvartheta_{n+U} -\bvartheta_n)\|_{\Ltwo}^2.
    \end{align}
    Since for $\eta\leqslant 0.2$ we have
    $\bar\alpha_2\geqslant 0.9$, we get 
    $(1 - \bar\alpha_2\eta )^2 + 0.2M_\gamma \eta^2
    + 5.4M_\gamma(2\eta^2/3)\leqslant 
    (1-0.9\eta)^2 + 0.008\eta + 0.16\eta\leqslant 
    1- 1.47\eta$. Therefore,
    \begin{align}
         \|\bv_{n+1}\|_{\Ltwo}^2 & \leqslant {(1 - 1.47 \eta ) \| 
        \bv_n\|_{\Ltwo}^2 - 2\bar\alpha_2\eta  (1 - 
        \eta \bar\alpha_2) \mathbb E [\bv_n^\top \bg_{n}] + 1.12\eta^2 \|\bg_n\|_{\Ltwo}^2 +
        2.12\eta\gamma p}. 
        \label{eq:43}
    \end{align}
    The next step is to get an upper bound on 
    $\mathbb E[\bv_{n+1}^\top\bg_{n+1}] - \mathbb
    E[\bv_{n}^\top\bg_n]$ in order to prove
    \eqref{eq:help2}. To this end, we first
    note that
    \begin{align}
        \|\bv_{n+1}\|_{\Ltwo}
        &\leqslant \|\bv_n\|_{\Ltwo} + \eta\| 
        \bg_{n+U}\|_{\Ltwo}
        +\sqrt{2\eta\gamma p}\\
        &\leqslant \|\bv_n\|_{\Ltwo} + \eta\| 
        \bg_n\|_{\Ltwo} + M_\gamma \eta\| 
        \gamma(\bvartheta_{n+U} -\bvartheta_n)\|_{\Ltwo}
        +\sqrt{2\eta\gamma p}\\
        &\leqslant 1.004\big(\|\bv_n\|_{\Ltwo} +  
        \eta\|\bg_n\|_{\Ltwo} 
        +\sqrt{2\eta\gamma p}\big).\label{eq:46}
    \end{align}
    From \eqref{eq:45} and \eqref{eq:46}, we 
    also infer that
    \begin{align}
        \gamma\|\bv_{n+1}\|_{\Ltwo}\|\bvartheta_{n+1} - 
        \bvartheta_n\|_{\Ltwo} 
        &\leqslant  \eta\sqrt{3(1.001^2 + 0.501^2 + 0.34)}\big(\|\bv_n\|_{\Ltwo}^2+  
        \eta^2\|\bg_n\|_{\Ltwo}^2 + 
        2\eta\gamma  p\big)\\
        &\leqslant 2.2 \eta\big(\|\bv_n\|_{\Ltwo}^2+  
        \eta^2\|\bg_n\|_{\Ltwo}^2 + 
        2\eta\gamma  p\big).
    \end{align}
    Therefore, this bound and some elementary 
    computations yield
    \begin{align}
        \mathbb E[\bv_{n+1}^\top\bg_{n+1}] &\leqslant 
        \mathbb E[\bv_{n}^\top\bg_n]  + \mathbb E[\bv_{n+1}^\top(\bg_{n+1}-\bg_n)] + \mathbb E[(\bv_{n+1} -\bv_n)^\top\bg_n]\\
        &\leqslant \mathbb E[\bv_{n}^\top\bg_n] + M_\gamma\|\bv_{n+1}\|_{\Ltwo}\|\gamma(\bvartheta_{n+1}-\bvartheta_n)\|_{\Ltwo} -\bar\alpha_2\eta \mathbb E[\bv_n^\top\bg_n] -\eta\mathbb E [\bg_n^\top\bg_{n+U}]\\
        &\leqslant (1- \bar\alpha_2\eta )\mathbb E[\bv_{n}^\top\bg_n] + M_\gamma\|\bv_{n+1}\|_{\Ltwo}\|\gamma(\bvartheta_{n+1}-\bvartheta_n)\|_{\Ltwo} - \eta\|\bg_n\|_{\Ltwo}^2\\ 
        &\qquad + M_\gamma \eta\|\bg_n\|_{\Ltwo}\|\gamma(\bvartheta_{n+U} - \bvartheta_n)\|_{\Ltwo}\\
        &\leqslant (1- \bar\alpha_2\eta )\mathbb E[\bv_{n}^\top\bg_n] - \eta\|\bg_n\|_{\Ltwo}^2 + 2.2 M_\gamma \eta\big(\|\bv_n\|_{\Ltwo}^2 +  
        \eta^2\|\bg_n\|_{\Ltwo}^2 + 
        2\eta\gamma  p\big)\\
        &\qquad + M_\gamma \eta^2\|\bg_n\|_{\Ltwo} 
        \big(\tfrac23\,\|\bv_n\|^2_{\Ltwo} 
        + \tfrac1{10}\eta^2\|\bg_n\|_{\Ltwo}^2 +
        \tfrac16 \eta\gamma  p\big)^{1/2}\\
        &\leqslant (1- \bar\alpha_2\eta )\mathbb E[\bv_{n}^\top\bg_n] - \eta\|\bg_n\|_{\Ltwo}^2 + 2.2 M_\gamma
        \eta\big(\|\bv_n\|_{\Ltwo}^2 +  
        \eta^2\|\bg_n\|_{\Ltwo}^2 + 
        2\eta\gamma  p\big)\\
        &\qquad+ 0.5 M_\gamma \eta \big(\tfrac23\,\|\bv_n\|^2_{\Ltwo} 
        + \tfrac{11}{10}\eta^2\|\bg_n\|_{\Ltwo}^2 +
        \tfrac16 \eta\gamma  p\big).
    \end{align}
    Grouping the terms, and using the fact that 
    $M_\gamma\eta\leqslant 1/25$, we arrive at
    \begin{align}
        \mathbb E[\bv_{n+1}^\top\bg_{n+1}] 
        &\leqslant {(1- \bar\alpha_2\eta )\mathbb E[\bv_{n}^\top\bg_n] - 0.97\eta
        \|\bg_n\|_{\Ltwo}^2  + 2.54 M_\gamma \eta\|\bv_n\|_{\Ltwo
        }^2 +  4.5 M_\gamma \gamma \eta^2 p}\\
        &\leqslant {(1- \bar\alpha_2\eta )\mathbb E[\bv_{n}^\top\bg_n] - 0.97\eta
        \|\bg_n\|_{\Ltwo}^2  + 0.51  \eta\|\bv_n\|_{\Ltwo
        }^2 +  0.9 \gamma \eta^2 p}.\label{eq:47}
    \end{align}
    Similarly, using the Lipschitz property of $\nabla f$
    and \eqref{eq:48}, we get
    \begin{align}
        \gamma\mathbb E[f_{n+1} - f_n] & \leqslant 
        \gamma\mathbb E[ \bg_n^\top(\bvartheta_{n+1} 
        - \bvartheta_n)] + (M_\gamma/2)\| \gamma(\bvartheta_{n+1} -
        \bvartheta_n) \|^2_{\Ltwo}\\
        & =   \mathbb E[\bg_n^\top 
        (\bar\alpha_2\eta \bv_n -\eta^2\bar\beta_2 \bg_{n+U})] + 0.07\eta^2( 3\|\bv_n\|^2_{\Ltwo} + \eta^2 \|\bg_n \|_{\Ltwo}^2 +  \eta\gamma  p) \\
        &\leqslant \bar\alpha_2\eta \mathbb
        E[\bv_n^\top \bg_n] -  \eta^2\mathbb E[\bar\beta_2]
        \|\bg_n\|^2_{\Ltwo} 
        +  0.2\eta^2\|\bg_n\|_{\Ltwo}
        \|\gamma(\bvartheta_{n+U}- \bvartheta_n)\|_{\Ltwo}\\
        &\qquad + 0.07\eta^2( 3\|\bv_n\|^2_{\Ltwo} + \eta^2 \|\bg_n \|_{\Ltwo}^2 +  \eta\gamma  p) \\
        &\leqslant \bar\alpha_2\eta \mathbb
        E[\bv_n^\top \bg_n] -  \eta^2\mathbb E[\bar\beta_2]
        \|\bg_n\|^2_{\Ltwo} + 0.1\eta^4\|\bg_n
        \|_{\Ltwo}^2 + 0.1 \|\gamma(\bvartheta_{n+U}- \bvartheta_n) 
        \|_{\Ltwo}^2\\
        &\qquad + 0.07\eta^2( 3\|\bv_n\|^2_{\Ltwo} + \eta^2 \|\bg_n \|_{\Ltwo}^2 +  \eta\gamma  p)\\
        &\leqslant \bar\alpha_2\eta \mathbb
        E[\bv_n^\top \bg_n] - 0.468  \eta^2
        \|\bg_n\|^2_{\Ltwo} + 0.1 \eta^4\|\bg_n
        \|_{\Ltwo}^2\\
        &\qquad + 0.07 \eta^2 \|\bv_n\|^2_{\Ltwo} + 
        0.01 \eta^4 \|\bg_n \|_{\Ltwo}^2 + 0.02 \eta^3\gamma  p\\
        &\qquad + 0.07\eta^2( 3\|\bv_n\|^2_{\Ltwo} + \eta^2 \|\bg_n \|_{\Ltwo}^2 +  \eta\gamma  p).
    \end{align}
    Grouping the terms, and using the fact that 
    $M_\gamma\eta^2\leqslant 1/50$, we arrive at
    \begin{align}
        \gamma \mathbb E[f_{n+1} - f_n] 
        &\leqslant \bar\alpha_2\eta \mathbb
        E[\bv_n^\top \bg_n] - 0.46\eta^2\|\bg_n\|^2_{\Ltwo} +  0.28\eta^2 \|\bv_n\|^2_{\Ltwo} + 
         0.09 \eta^3\gamma  p.
    \end{align}
    This completes the proof of the lemma. 


\end{document}